\theoremstyle{definition}
\newtheorem{mydef}{Definition}[section]
\newtheorem{lem}[mydef]{Lemma}
\newtheorem{thm}[mydef]{Theorem}
\newtheorem{cor}[mydef]{Corollary}
\newtheorem{hypothesis}[mydef]{Hypothesis}
\newtheorem{prop}[mydef]{Proposition}
\newtheorem{defin}[mydef]{Definition}
\newtheorem{example}[mydef]{Example}
\newtheorem{remark}[mydef]{Remark}
\newtheorem{notation}[mydef]{Notation}
\newtheorem{fact}[mydef]{Fact}
\newcommand{\fct}[2]{{}^{#1}#2}
\newcommand{\ba}{\bar{a}}
\newcommand{\bb}{\bar{b}}
\newcommand{\bc}{\bar{c}}
\newcommand{\bx}{\bar{x}}
\newcommand{\by}{\bar{y}}
\newcommand{\bigM}{\widehat{M}}
\newcommand{\bigN}{\widehat{N}}
\newcommand{\bigL}{\widehat{L}}
\newcommand{\bigtau}{\widehat{\tau}}
\newcommand{\bigK}{\widehat{K}}
\newcommand{\sea}{\mathfrak{C}}
\newcommand{\dom}[1]{\text{dom}(#1)}
\newcommand{\cf}[1]{\text{cf} (#1)}
\newcommand{\seq}[1]{\langle #1 \rangle}
\newcommand{\rest}{\upharpoonright}
\newcommand{\bkappa}{\bar \kappa}
\def\lta{<}
\def\lea{\le}
\def\gea{\ge}
\def\lee{\preceq}
\newbox\noforkbox \newdimen\forklinewidth
\noforkbox\hbox{\lower 2pt\box1\lower
2pt\box0\relax}
\def\unionstick{\mathop{\copy\noforkbox}\limits}
\newbox\doesforkbox
\doesforkbox\hbox{\lower 0pt\box1 \lower
2pt\box2\lower2pt\box0\relax}
\newcommand{\nf}{\unionstick}
\newcommand{\nfs}[4]{#2 \nf_{#1}^{#4} #3}
\def\1nf{\unionstick^{(1)}}
\def\2nf{\unionstick^{(2)}}
\newcommand{\tp}{\operatorname{tp}}
\newcommand{\qfl}{\qf\Ll_{\kappa, \kappa} (\bigtau)}
\newcommand{\qtp}{\tp_{\qfl}}
\newcommand{\gtp}{\text{gtp}}
\newcommand{\Ss}{\text{S}}
\newcommand{\gS}{\text{gS}}
\newcommand{\hanf}[1]{h (#1)}
\newcommand{\hanfs}[1]{h^\ast (#1)}
\newcommand{\LS}{\text{LS}}
\newcommand{\FV}[1]{\text{FV} (#1)}
\newcommand{\plus}[1]{{#1}_r}
\newcommand{\kappap}{\plus{\kappa}}
\newcommand{\Ksatpp}[2]{{#1}^{#2\text{-sat}}}
\newcommand{\Ksatp}[1]{\Ksatpp{K}{#1}}
\newcommand{\Kmhp}[1]{K^{#1\text{-mh}}}
\newcommand{\Kmh}[1]{\Kmhp{\lambda}}
\newcommand{\K}{K}
\newcommand{\Ll}{\mathbb{L}}
\newcommand{\qf}{\operatorname{qf-}}
\title{Infinitary stability theory}
\author{Sebastien Vasey}
\thanks{This material is based upon work done while the author was supported by the Swiss National Science Foundation under Grant No.\ 155136.}
\email{sebv@cmu.edu}
\urladdr{http://math.cmu.edu/\textasciitilde svasey/}
\address{Department of Mathematical Sciences, Carnegie Mellon University, Pittsburgh, Pennsylvania, USA}
\date{\today \\
AMS 2010 Subject Classification: Primary 03C48. Secondary: 03C45, 03C52, 03C55, 03C75, 03E55.} 
\begin{document}

\begin{abstract} 
We introduce a new device in the study of abstract elementary classes (AECs): Galois Morleyization, which consists in expanding the models of the class with a relation for every Galois (orbital) type of length less than a fixed cardinal $\kappa$. We show:

\begin{thm}[The semantic-syntactic correspondence]
  An AEC $K$ is fully $(<\kappa)$-tame and type short if and only if Galois types are syntactic in the Galois Morleyization.
\end{thm}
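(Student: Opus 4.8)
The plan is to collapse both sides of the biconditional onto the single assertion that a Galois type is determined by the Galois types of its subtuples of length $<\kappa$ over the subsets of size $<\kappa$ of its domain. I read ``Galois types are syntactic in the Galois Morleyization'' as: for all tuples $\ba,\bb$ of a common length and all $A\subseteq\mathfrak{C}$, inside a monster model $\mathfrak{C}$, one has $\gtp(\ba/A;\mathfrak{C})=\gtp(\bb/A;\mathfrak{C})$ iff $\ba$ and $\bb$ have the same quantifier-free $\Ll_{\kappa,\kappa}(\bigtau)$-type over $A$ in the Galois Morleyization $\widehat{\mathfrak{C}}$, which I abbreviate $\qtp(\ba/A)$; here $\bigtau$ has, for each Galois type $p$ over $\emptyset$ of length $<\kappa$, a predicate $R_p$ interpreted so that $\widehat{\mathfrak{C}}\models R_p(\bc)\iff\gtp(\bc/\emptyset;\mathfrak{C})=p$, and I take the standing hypotheses to include amalgamation, so that $\mathfrak{C}$ exists and equality of Galois types over $A$ means conjugacy by an element of $\Aut(\mathfrak{C}/A)$. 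One half is automatic and uses nothing: if $f\in\Aut(\mathfrak{C}/A)$ sends $\ba$ to $\bb$, then since each $R_p$ names an $\Aut(\mathfrak{C})$-invariant class, $f$ is also an automorphism of $\widehat{\mathfrak{C}}$ fixing $A$, so $\qtp(\ba/A)=\qtp(\bb/A)$. Thus ``Galois types are syntactic'' reduces to the implication $\qtp(\ba/A)=\qtp(\bb/A)\implies\gtp(\ba/A;\mathfrak{C})=\gtp(\bb/A;\mathfrak{C})$.

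\emph{Localization of quantifier-free types.} Since every $\Ll_{\kappa,\kappa}$-formula has fewer than $\kappa$ free variables, any single formula of $\qtp(\ba/A)$ involves only a subtuple $\ba\rest I_0$ with $I_0\subseteq\ell(\ba)$, $|I_0|<\kappa$, and parameters from some $A_0\subseteq A$ with $|A_0|<\kappa$; hence $\qtp(\ba/A)=\qtp(\bb/A)$ exactly when $\qtp(\ba\rest I_0/A_0)=\qtp(\bb\rest I_0/A_0)$ for all such $I_0,A_0$. For tuples $\ba',\bb'$ of length $<\kappa$ and a set $A_0$ of size $<\kappa$, I then claim $\qtp(\ba'/A_0)=\qtp(\bb'/A_0)$ iff $\gtp(\ba'/A_0;\mathfrak{C})=\gtp(\bb'/A_0;\mathfrak{C})$: fixing an enumeration $\bc$ of $A_0$ and folding it into the tuple reduces this to the case $A_0=\emptyset$, where the non-automatic direction is handled by the atomic predicate $R_p$ for $p:=\gtp(\ba'\bc/\emptyset;\mathfrak{C})$ (a Galois type over $\emptyset$ of length $<\kappa$, so $R_p$ is a symbol of $\bigtau$), since $\widehat{\mathfrak{C}}\models R_p(\ba'\bc)$ forces $\widehat{\mathfrak{C}}\models R_p(\bb'\bc)$, i.e.\ $\gtp(\bb'\bc/\emptyset;\mathfrak{C})=p$, and an automorphism of $\mathfrak{C}$ carrying $\ba'\bc$ to $\bb'\bc$ fixes $A_0$ pointwise and carries $\ba'$ to $\bb'$. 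Chaining the two equivalences, $\qtp(\ba/A)=\qtp(\bb/A)$ holds precisely when $\gtp(\ba\rest I_0/A_0;\mathfrak{C})=\gtp(\bb\rest I_0/A_0;\mathfrak{C})$ for every $I_0\subseteq\ell(\ba)$ with $|I_0|<\kappa$ and every $A_0\subseteq A$ with $|A_0|<\kappa$.

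\emph{Conclusion and the hard point.} Putting the reductions together, ``Galois types are syntactic'' becomes: for all $\ba,\bb,A$, $\gtp(\ba/A;\mathfrak{C})=\gtp(\bb/A;\mathfrak{C})$ iff $\gtp(\ba\rest I_0/A_0;\mathfrak{C})=\gtp(\bb\rest I_0/A_0;\mathfrak{C})$ for all $I_0\subseteq\ell(\ba)$ with $|I_0|<\kappa$ and all $A_0\subseteq A$ with $|A_0|<\kappa$. Its left-to-right implication is immediate (restrict the tuple; $\Aut(\mathfrak{C}/A)\subseteq\Aut(\mathfrak{C}/A_0)$), and its right-to-left implication is, once the definitions are unwound, exactly the statement that $K$ is fully $(<\kappa)$-tame and type short (tameness supplies the reduction of the domain, type shortness the reduction of the realized tuple); so the two conditions of the theorem coincide. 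I expect no deep difficulty here --- the content is a translation --- so the real work is bookkeeping: fixing index-set and arity conventions so that ``$\widehat{\mathfrak{C}}\models R_p(\bd)\iff\gtp(\bd/\emptyset;\mathfrak{C})=p$'' still applies after the domain-into-tuple reductions, and checking that the paper's ``fully $(<\kappa)$-tame and type short'' (for types of arbitrary length over arbitrary bases) is literally the ``small Galois data determines the type'' statement above, not a length-one or saturation-based weakening. No amalgamation-theoretic content beyond the existence of $\mathfrak{C}$ enters.
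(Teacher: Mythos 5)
Your core mechanism is the same as the paper's: the predicates $R_p$ exactly capture Galois types of $(<\kappa)$-tuples over $\emptyset$, every quantifier-free $\Ll_{\kappa,\kappa}(\bigtau)$-formula involves fewer than $\kappa$ variables and parameters, so equality of syntactic types is equality of all small Galois restrictions, and ``fully $(<\kappa)$-tame and short'' is precisely the statement that these small restrictions determine the type. Within your setting the steps (localization, folding the small parameter set into the tuple, reading off $R_p$) are correct, and your gloss that tameness handles the domain and shortness the length is at the same level of detail as the paper's own proof of Theorem \ref{separation}.

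The genuine gap is your standing assumption of a monster model. You declare that ``Galois types are syntactic'' means $\Aut(\sea/A)$-conjugacy coincides with quantifier-free type equality in $\widehat{\sea}$, and you close by saying no amalgamation-theoretic content enters beyond the existence of $\sea$ --- but the existence of $\sea$ \emph{is} amalgamation-theoretic content (amalgamation plus joint embedding plus arbitrarily large models), and the theorem, as stated and as proved in the paper, assumes none of it: the paper formalizes ``Galois types are syntactic'' as the map $p \mapsto p^s$ being a bijection, where Galois types are $E$-classes of triples $(\bb, A, N)$ with representatives in possibly different models, and the two directions are proved using only Lemma \ref{gtp-map} (invariance and monotonicity of the functorial expansion, applied to embeddings) and the finiteness-below-$\kappa$ of formulas. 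The paper stresses ``we make no hypothesis on $K$; in particular, amalgamation is not needed,'' and this generality matters later (e.g.\ Corollary \ref{syn-stab-spectrum} is amalgamation-free). Without amalgamation your formalization is not even available --- automorphism orbits over a set in a monster have no analogue, and even with amalgamation identifying them with $E$-classes over arbitrary sets requires the model-homogeneity argument you leave implicit. To prove the theorem as stated you must replace the automorphism bookkeeping by the embedding-based argument: given $p = \gtp(\bb/A;N)$ and $q = \gtp(\bb'/A;N')$ in different models, separate $p^s$ from $q^s$ via $R_{p_0'}$ for $p_0' = \gtp(\bb_0\ba_0/\emptyset;N)$, and conversely extract the small $A_0$ and $I$ from a separating formula --- which is exactly the paper's proof.
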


This exhibits a correspondence between AECs and the syntactic framework of stability theory inside a model. We use the correspondence to make progress on the stability theory of tame and type short AECs. The main theorems are:

\begin{thm}\label{stab-spectrum-abstract}
  Let $K$ be a $\LS (\K)$-tame AEC with amalgamation. The following are equivalent:

  \begin{enumerate}
    \item $K$ is Galois stable in some $\lambda \ge \LS (\K)$.
    \item $K$ does not have the order property (defined in terms of Galois types).
    \item There exist cardinals $\mu$ and $\lambda_0$ with $\mu \le \lambda_0 < \beth_{(2^{\LS (\K)})^+}$ such that $K$ is Galois stable in any $\lambda \ge \lambda_0$ with $\lambda = \lambda^{<\mu}$.
  \end{enumerate}
\end{thm}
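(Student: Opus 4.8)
The plan is to reduce to the syntactic stability theory of the Galois Morleyization. Fix $\kappa := \LS(\K)^+$ and let $\bigK$ be the Galois Morleyization of $\K$ at $\kappa$, a class of $\bigtau$-structures with $|\bigtau| \le 2^{\LS(\K)}$ (there are at most $2^{\LS(\K)}$ Galois types of length $\le \LS(\K)$ over $\emptyset$). The point — which uses only $\LS(\K)$-tameness, not the full semantic--syntactic correspondence and not type shortness — is that for every $M \in \K$ the assignment $\gtp(a/M) \mapsto \qtp(a/\bigM)$ is injective on length-one Galois types: if $\gtp(a/M) \ne \gtp(a'/M)$, then by $\LS(\K)$-tameness there is $N \le M$ in $\K$ with $\|N\| = \LS(\K)$ and $\gtp(a/N) \ne \gtp(a'/N)$, and after fixing an enumeration $\bb$ of $N$ this difference is recorded by the atomic formula $R_q(x, \bb)$ with $q := \gtp(a\bb/\emptyset)$, a symbol of $\bigtau$ since $q$ has length $1 + \LS(\K) < \kappa$. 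Hence $|\gS(M)| \le {}$ (the number of $\qfl$-types over $\bigM$). Moreover every $\qfl$-formula has $< \kappa$, hence $\le \LS(\K)$, free variables, and Galois types over $\emptyset$ determine $\qfl$-types over $\emptyset$ (an automorphism of $\mathfrak{C}$ extends to $\bigK$); so if some $\qfl$-formula $\varphi(\bx, \by)$ has the order property of length $\mu$ on $\bigK$, then concatenating the two sequences of the witness and comparing Galois types of increasing versus decreasing pairs produces an $(\LS(\K), \mu)$-Galois order property configuration.

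Granting this dictionary, the cycle runs as follows. For $(3) \Rightarrow (1)$: the class of $\lambda$ with $\lambda \ge \lambda_0$ and $\lambda = \lambda^{<\mu}$ is nonempty and contains cardinals $\ge \LS(\K)$ (e.g.\ $\lambda := 2^{\lambda_0 + \LS(\K)}$, using $<\mu \le \lambda_0$), so $(3)$ yields Galois stability in some $\lambda \ge \LS(\K)$. For $(1) \Rightarrow (2)$, argue contrapositively: if $\K$ has the Galois order property, then by the Hanf number computation for AECs — Shelah's presentation theorem together with Ehrenfeucht--Mostowski models, the Hanf number being $\beth_{(2^{\LS(\K)})^+}$ — $\K$ has the Galois order property of every length, and the standard argument (realize a long order-indiscernible sequence over a model of size $\lambda$ and read off distinct Galois types) shows $\K$ is Galois unstable in every $\lambda \ge \LS(\K)$, contradicting $(1)$.

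The substance is $(2) \Rightarrow (3)$. Assume $\K$ has no Galois order property. By the same AEC Hanf number computation there is a least $\mu < \beth_{(2^{\LS(\K)})^+}$ such that $\K$ lacks the $(\LS(\K), \mu)$-Galois order property. By the dictionary, no $\qfl$-formula has the order property of length $\mu$ on $\bigK$ (such a configuration would, after concatenation, be an $(\LS(\K), \mu)$-Galois order property configuration, as every $\qfl$-formula has $\le \LS(\K)$ free variables). Now invoke the classical syntactic stability spectrum for $\qfl$ — the ``stability theory inside a model'' of the abstract — which converts ``no formula has the order property of length $\mu$'' into a bound of the form: the number of $\qfl$-types over $\bigM$ is at most $\|M\|^{<\mu} + 2^{|\bigtau| + \kappa}$, and here $2^{|\bigtau| + \kappa} \le 2^{2^{\LS(\K)}} < \beth_{(2^{\LS(\K)})^+}$. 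Putting $\lambda_0 := \mu + 2^{2^{\LS(\K)}}$ (so $\mu \le \lambda_0 < \beth_{(2^{\LS(\K)})^+}$), for every $\lambda \ge \lambda_0$ with $\lambda = \lambda^{<\mu}$ and every $M$ with $\|M\| = \lambda$ we get $|\gS(M)| \le \|M\|^{<\mu} + 2^{2^{\LS(\K)}} = \lambda$, i.e.\ $\K$ is Galois stable in $\lambda$. That is $(3)$.

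The main obstacle is keeping the cardinal bookkeeping under control: the bounds on $\mu$ and $\lambda_0$ must land below $\beth_{(2^{\LS(\K)})^+}$, which forces Morleyizing at exactly $\LS(\K)^+$ (so $|\bigtau| \le 2^{\LS(\K)}$ and $2^{|\bigtau| + \kappa} \le 2^{2^{\LS(\K)}}$) and, crucially, extracting the Hanf number on the AEC side — where it is the classical $\beth_{(2^{\LS(\K)})^+}$ — rather than by feeding $\bigK$ into the stability theory of $\qfl$, whose own Hanf number $\beth_{(2^{2^{\LS(\K)}})^+}$ is already too large. The other points needing care are the verification of the Morleyization dictionary with only $\LS(\K)$-tameness available, and the $\qfl$-version of ``no order property of length $\mu$ implies the type-counting bound'', which is where the genuine model-theoretic work lies (Shelah-style syntactic stability theory adapted to $\qfl$).
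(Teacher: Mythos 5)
Your overall architecture and your treatment of the main implication coincide with the paper's: Morleyize at $\kappa = \LS(K)^+$, bound the order-property length on the AEC side via the Hanf number $\beth_{(2^{\LS(K)})^+}$ (Fact \ref{shelah-hanf}), pass between the syntactic and Galois order properties by concatenation and Erd\H{o}s--Rado (Proposition \ref{op-equiv}), feed ``no syntactic order property of bounded length'' into Shelah's stability theory inside a model (Fact \ref{syn-stab-spectrum-technical}, packaged as Corollary \ref{syn-stab-spectrum}), and pull the count back to Galois types of length one using only $\LS(K)$-tameness (Corollary \ref{types-formula}); this is exactly the paper's proof of (3)$\Rightarrow$(4) in Theorem \ref{stab-spectrum}. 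One bookkeeping caveat: the cited syntactic fact does not give your bound $\|M\|^{<\mu} + 2^{|\bigtau|+\kappa}$; it gives stability in $\lambda$ only when $\lambda = \lambda^{\chi}$ \emph{and} $\lambda \ge \beth_2(\chi)$, where $\chi$ is (roughly) the order-property length bound, which may be much larger than $2^{\LS(K)}$. So your $\lambda_0 := \mu + 2^{2^{\LS(K)}}$ should be replaced by $\beth_2(\chi)$; since $\chi < \beth_{(2^{\LS(K)})^+}$ and the Hanf number is a beth limit, this is still below $\beth_{(2^{\LS(K)})^+}$, so the statement is unaffected.

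The genuine gap is in (1)$\Rightarrow$(2). The order property in item (2) is the $\alpha$-order property for an \emph{arbitrary} cardinal $\alpha$, possibly far larger than $\LS(K)$ and than the stability cardinal $\lambda$ of item (1). Your ``standard argument'' (order-indiscernibles plus reading off types) produces many Galois types \emph{of length $\alpha$} over a model, i.e.\ it contradicts $\alpha$-stability (this is Fact \ref{stab-facts-op}); but item (1) only asserts stability for types of length one, and in the Galois setting many long types do not automatically yield many $1$-types. The paper bridges the lengths with Fact \ref{stab-facts}: Boney's transfer (stable in $\mu$ for $1$-types with $\mu^{\alpha} = \mu$ implies $\alpha$-stable in $\mu$) combined with the Grossberg--VanDieren upward stability transfer from tameness and amalgamation, so that (1) yields $\alpha$-stability in unboundedly many cardinals for every $\alpha$, contradicting the $\alpha$-order property. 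Your sketch neither invokes nor replaces these ingredients, and without them the implication as written fails for large $\alpha$: what your argument does establish is only that stability rules out the $(<\kappa)$-order property (the analogue of item (3) of Theorem \ref{stab-spectrum}), not the full item (2).
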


\begin{thm}\label{coheir-syn-ab}
  Let $K$ be a fully $(<\kappa)$-tame and type short AEC with amalgamation, $\kappa = \beth_{\kappa} > \LS (K)$. If $K$ is Galois stable, then the class of $\kappa$-Galois saturated models of $K$ admits an independence notion ($(<\kappa)$-coheir) which, except perhaps for extension, has the properties of forking in a first-order stable theory.
\end{thm}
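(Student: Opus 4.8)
The plan is to do everything inside the Galois Morleyization $\bigK$. By the semantic-syntactic correspondence, full $(<\kappa)$-tameness and type shortness say that, for the relevant models, a Galois type of length $<\kappa$ is literally the same object as a quantifier-free $\Ll_{\kappa,\kappa}(\bigtau)$-type, so I can define $(<\kappa)$-coheir purely syntactically: $\gtp(\bar b / M)$ does not fork over $M_0 \leq M$ if and only if $\qtp(\bar b / M)$ is finitely satisfiable in $M_0$, meaning every one of its formulas --- each of which may itself be a $(<\kappa)$-conjunction of atomic $\bigtau$-formulas --- is realized by a tuple from $M_0$. The whole development is carried out inside the class $\Ksatp{\kappa}$ of $\kappa$-Galois saturated models of $K$. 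The first thing I would pin down is the cardinal arithmetic: because $\kappa = \beth_\kappa > \LS(K)$ and $K$ is Galois stable, $\bigtau$ together with a suitable fragment has size $\leq \kappa$, $\Ksatp{\kappa}$ is itself an AEC with Löwenheim--Skolem--Tarski number $\kappa$, and the syntactic machinery and $\kappa$-saturation are compatible with the directed unions that later arguments need; pinning this down is what makes the subsequent L\"owenheim, Erd\H{o}s--Rado-style and type-counting arguments legal.

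Next I would verify the forking properties one at a time, transporting the classical theory of coheir (finite satisfiability) in a stable first-order theory to the fragment $\qfl$. Invariance, monotonicity, normality and base monotonicity are immediate from the definition, since finite satisfiability persists when the witness model grows or the type is restricted, and transitivity is a finite-satisfiability chase along a chain of models. The delicate positive property is existence of nonforking extensions to larger models: given $M_0 \leq M$ in $\Ksatp{\kappa}$ and $p \in \gS(M_0)$, one must produce a nonforking extension of $p$ to $M$. I would build it by a transfinite construction, enumerating $M$ over $M_0$ and extending one element at a time, at successor steps using stability (or a localized average-type/ultrafilter argument) to find a coheir extension over the enlarged parameter set, and at limits taking unions (which preserve coheir provided the enumeration has length of cofinality $\geq \kappa$). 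Making the infinitary conjunctions of $\qfl$ cooperate --- in particular guaranteeing that the averaged or union type is actually realized in the monster model, not merely finitely consistent --- is the crux, and is exactly where $\kappa$-saturation and Galois stability do real work. The full extension property over arbitrary sets (not just models) is not asserted; that is the meaning of ``except perhaps for extension''.

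The genuinely stability-theoretic steps are symmetry and uniqueness (stationarity over $\kappa$-saturated models): a $(<\kappa)$-coheir over an $M \in \Ksatp{\kappa}$ is the unique nonforking extension of its restriction to any $M_0 \leq M$, and coheir independence is symmetric. Both follow from the standard dichotomy: were symmetry or uniqueness to fail, one would assemble a long alternating sequence of tuples witnessing --- inside $\bigK$, hence inside $K$ --- the order property defined in terms of Galois types, contradicting Galois stability (cf.\ Theorem~\ref{stab-spectrum-abstract}). Local character, that every $p \in \gS(M)$ does not fork over some $M_0 \leq M$ of size below a bound depending only on $\kappa$ and the stability spectrum, follows from the same counting of $\qfl$-types together with the cardinal arithmetic fixed at the outset.

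The main obstacle I expect is not any single axiom but the bookkeeping that makes the first-order arguments legitimate here: keeping every relevant syntactic object of size $\leq \kappa$ (which is precisely why $\kappa$ is taken to be a beth fixed point), transferring the order property faithfully between $K$ and $\bigK$, checking that all constructions stay inside $\Ksatp{\kappa}$ and that $\kappa$-saturation survives the chains used in transitivity, existence and local character, and --- the hardest single point --- establishing existence of coheir extensions despite the absence of compactness, by leveraging saturation and stability in place of an ultrafilter argument. Once this infrastructure is in place, symmetry and uniqueness are direct adaptations of the classical coheir arguments.
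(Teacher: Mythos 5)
Your overall architecture --- defining $(<\kappa)$-coheir syntactically in the Galois Morleyization of the class $\Ksatp{\kappa}$, checking the structural properties directly, deriving symmetry, uniqueness and local character from the failure of the order property (which follows from Galois stability via $\kappa = \beth_\kappa$ and the Hanf number argument), and translating back through the semantic-syntactic correspondence using full tameness and shortness --- is essentially the paper's route (the paper reduces the statement to an itemized list of properties of coheir, proved by adapting Pillay's symmetry argument and Makkai--Shelah's local character argument to the quantifier-free $\Ll_{\kappa,\kappa}(\bigtau)$ fragment). Two infrastructure points you state loosely: $\Ksatp{\kappa}$ is only a $\kappap$-AEC, not an AEC with L\"owenheim-Skolem-Tarski number $\kappa$; and what actually replaces first-order elementarity in the symmetry and local character arguments is that $M \lea N$ in $\Ksatp{\kappa}$ gives $\Sigma_1(\Ll_{\kappa,\kappa}(\bigtau))$-elementarity of $\bigM$ in $\bigN$, a consequence of $\kappa$-saturation.

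The genuine gap is your treatment of extension. You read ``except perhaps for extension'' as excluding only extension over arbitrary sets, and you set out to prove the extension property over models (given $M_0 \lea M$ in $\Ksatp{\kappa}$ and $p \in \gS(M_0)$, produce a coheir extension of $p$ to $M$). That is precisely the property the theorem does \emph{not} claim: the paper explicitly states it is unable to prove it in this generality (Boney and Grossberg assume it outright, or derive it from a strongly compact cardinal). Moreover, the construction you sketch would fail. Extending element by element and taking unions at limits only yields a quantifier-free $\Ll_{\kappa,\kappa}(\bigtau)$-type over $M$ that is $(<\kappa)$-satisfiable in $M_0$; without compactness there is no reason such a syntactic type is realized in any $\lea$-extension of $M$, and $\kappa$-saturation only guarantees realization of Galois types over parameter sets of size less than $\kappa$, whereas $\|M\| \ge \kappa$. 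You yourself flag this realizability problem as ``the crux,'' but no mechanism in your outline (stability, saturation, or an averaging argument) resolves it, and none is known at this level of generality. The correct move is simply to remove extension from the list of properties to be established; with that deletion, the remainder of your proposal matches the paper's proof.
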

\end{abstract}

\maketitle

\tableofcontents

\section{Introduction}

Abstract elementary classes (AECs) are sometimes described as a purely semantic framework for model theory. It has been shown, however, that AECs are closely connected with more syntactic objects. See for example Shelah's presentation theorem \cite[Lemma 1.8]{sh88}, or Kueker's \cite[Theorem 7.2]{kueker2008} showing that an AEC with Löwenheim-Skolem number $\lambda$ is closed under $\Ll_{\infty, \lambda^+}$-elementary equivalence.

Another framework for non-elementary model theory is stability theory inside a model (introduced in Rami Grossberg's 1981 master thesis and studied for example\footnote{The definition of a model being stable appears already in \cite[Definition I.2.2]{shelahfobook78} but (as Shelah notes in the introduction to \cite[Chapter I]{sh300-orig}) this concept was not pursued further there.} in \cite{grossberg91-indisc, grossberg91} or \cite[Chapter I]{sh300-orig}, see \cite[Chapter V.A]{shelahaecbook2} for a more recent version). There the methods are very syntactic but it is believed (see for example the remark on p.\ 116 of \cite{grossberg91-indisc}) that they can help the resolution of more semantic questions, such as Shelah's categoricity conjecture for $\Ll_{\omega_1, \omega}$.

In this paper, we establish a correspondence between these two frameworks. We show that results from stability theory inside a model directly translate to results about \emph{tame} abstract elementary classes. Recall that an AEC is $(<\kappa)$-tame if its Galois (i.e.\ orbital) types are determined by their restrictions to domains of size less than $\kappa$. Tameness as a property of AEC was first isolated (from an argument in \cite{sh394}) by Grossberg and VanDieren \cite{tamenessone} and used to prove an upward categoricity transfer \cite{tamenessthree,tamenesstwo}. Boney \cite{tamelc-jsl} showed that tameness follows from the existence of large cardinals. Combined with the categoricity transfers of Grossberg-VanDieren and Shelah \cite{sh394}, this showed assuming a large cardinal axiom that Shelah's eventual categoricity conjecture holds if the categoricity cardinal is a successor.

The basic idea of the translation is the observation (appearing for example in \cite[p.~15]{tamelc-jsl} or \cite[p.~206]{lieberman2011}) that in a $(<\kappa)$-tame abstract elementary class, Galois types over domains of size less than $\kappa$ play a role analogous to first-order formulas. We make this observation precise by expanding the language of such an AEC with a relation symbol for each Galois type over the empty set of a sequence of length than $\kappa$, and looking at $\Ll_{\kappa, \kappa}$-formulas in the expanded language. We call this expansion the \emph{Galois Morleyization}\footnote{We thank Rami Grossberg for suggesting the name.} of the AEC. Thinking of a type as the set of its small restrictions, we can then prove the \emph{semantic-syntactic correspondence} (Theorem \ref{separation}): Galois types in the AEC correspond to quantifier-free syntactic types in its Galois Morleyization.

The correspondence gives us a new method to prove results in tame abstract elementary classes:

\begin{enumerate}
  \item Prove a syntactic result in the Galois Morleyization of the AEC (e.g.\ using tools from stability theory inside a model).
  \item Translate to a semantic result in the AEC using the semantic-syntactic correspondence.
  \item Push the semantic result further using known (semantic) facts about AECs, maybe combined with more hypotheses on the AEC (e.g.\ amalgamation).
\end{enumerate}

As an application, we prove Theorem \ref{stab-spectrum-abstract} in the abstract (see Theorem \ref{stab-spectrum}), which gives the equivalence between no order property and stability in tame AECs and generalizes one direction of the stability spectrum theorem of homogeneous model theory (\cite[Theorem 4.4]{sh3}, see also \cite[Corollary 3.11]{grle-homog}). The syntactic part of the proof is not new (it is a straightforward generalization of Shelah's first-order proof \cite[Theorem 2.10]{shelahfobook}) and we are told by Rami Grossberg that proving such results was one of the reason tameness was introduced (in fact theorems with the same spirit appear in \cite{tamenessone}). However we believe it is challenging to give a transparent proof of the result using Galois types only. The reason is that the classical proof uses local types and it is not clear how to naturally define them semantically.

The method has other applications: Theorem \ref{coheir-syn} (formalizing Theorem \ref{coheir-syn-ab} from the abstract) shows that in stable fully tame and short AECs, the coheir independence relation has some of the properties of a well-behaved independence notion. This is used in \cite{indep-aec-v5} to build a global independence notion from superstability. In \cite{bv-sat-v3}, we also use syntactic methods to investigate chains of Galois-saturated models.

Precursors to this work include Makkai and Shelah's study of classes of models of an $\Ll_{\kappa, \omega}$ theory for $\kappa$ a strongly compact cardinal \cite{makkaishelah}: there they prove \cite[Proposition 2.10]{makkaishelah} that Galois and syntactic $\Sigma_1 (\Ll_{\kappa, \kappa})$-types are the same (so in particular those classes are $(<\kappa)$-tame). One can see the results of this paper as a generalization to tame AECs. Also, the construction of the Galois Morleyization when $\kappa = \aleph_0$ (so the language remains finitary) appears in \cite[Section 2.4]{group-config-kangas}. Moreover it has been pointed out to us\footnote{By Jonathan Kirby.} that a device similar to Galois Morleyization is used in \cite[Section 3]{rosicky81} to present any concrete category as a class of models of an infinitary theory. However the use of Galois Morleyization to translate results of stability theory inside a model to AECs is new.

This paper is organized as follows. In section \ref{prelim-sec}, we review some preliminaries. In section \ref{sec-foundations}, we introduce \emph{functorial expansions}\footnote{These were called ``abstract Morleyizations'' in an early version of this paper. We thank John Baldwin for suggesting the new name.} of AECs and the main example: Galois Morleyization. We then prove the semantic-syntactic correspondence. In section \ref{thy-indep}, we investigate various order properties and prove Theorem \ref{stab-spectrum-abstract}. In section \ref{sec-coheir}, we study the coheir independence relation. Several of these sections have \emph{global hypotheses} which hold until the end of the section: see Hypotheses \ref{ftypes-hyp}, \ref{morley-hyp}, and \ref{coheir-hyp}.

We end with a note on how AECs compare to some other non first-order framework like homogeneous model theory (see \cite{sh3}). There is an example (due to Marcus, see \cite{marcus-counterexample}) of an $\Ll_{\omega_1, \omega}$-axiomatizable class which is categorical in all uncountable cardinals but does not have an $\aleph_1$-sequentially-homogeneous model. For $n < \omega$, an example due to Hart and Shelah (see \cite{hs-example, bk-hs}) has amalgamation, no maximal models, and is categorical in all $\aleph_k$ with $k \le n$, but no higher. By \cite{tamenesstwo}, the example cannot be $\aleph_k$-tame for $k < n$. However if $\kappa$ is a strongly compact cardinal, the example will be fully $(<\kappa)$-tame and type short by the main result of \cite{tamelc-jsl}. The discussion on p.\ 74 of \cite{baldwinbook09} gives more non-homogeneous examples.

In general, classes from homogeneous model theory or quasiminimal pregeometry classes (see \cite{quasimin}) are special cases of AECs that are always fully $(<\aleph_0)$-tame and type short. In this paper we work with the much more general assumption of $(<\kappa)$-tameness and type shortness for a possibly uncountable $\kappa$.

This paper was written while working on a Ph.D.\ thesis under the direction of Rami Grossberg at Carnegie Mellon University and I would like to thank Professor Grossberg for his guidance and assistance in my research in general and in this work specifically. I thank Will Boney for thoroughly reading this paper and providing invaluable feedback. I also thank Alexei Kolesnikov for valuable discussions on the idea of thinking of Galois types as formulas. I thank John Baldwin, Jonathan Kirby, and a referee for valuable comments.

\section{Preliminaries}\label{prelim-sec}

We review some of the basics of abstract elementary classes and fix some notation. The reader is advised to skim through this section quickly and go back to it as needed. 

\subsection{Set theoretic terminology}

\begin{defin}\label{kappa-r-def}
  Let $\kappa$ be an infinite cardinal.
  \begin{enumerate}
  \item Let $\kappap$ be the least regular cardinal greater than or equal to $\kappa$. That is, $\kappap$ is $\kappa^+$ if $\kappa$ is singular and $\kappa$ if $\kappa$ is regular.
  \item Let $\kappa^-$ be $\kappa$ if $\kappa$ is limit or the unique $\kappa_0$ such that $\kappa_0^+ = \kappa$ if $\kappa$ is a successor.
  \end{enumerate}
\end{defin}

We will often use the following function:

\begin{defin}[Hanf function]\label{hanf-def}
  For $\lambda$ an infinite cardinal, define $\hanf{\lambda} := \beth_{(2^{\lambda})^+}$. Also define $\hanfs{\lambda} := \hanf{\lambda^-}$.
\end{defin}

Note that for $\lambda$ infinite, $\lambda = \beth_\lambda$ if and only if for all $\mu < \lambda$, $h (\mu) < \lambda$.

\subsection{Syntax}\label{syntax-subsec}

The notation of this paper is standard, but since we will work with infinitary objects and need to be precise, we review the basics. We will often work with the logic $\Ll_{\kappa, \kappa}$, see \cite{dickmann-book} for the definition and basic results.

\begin{defin}\label{infinitary-def}
  An \emph{infinitary vocabulary} is a vocabulary where we also allow relation and function symbols of infinite arity. For simplicity, we require the arity to be an ordinal. An infinitary vocabulary is \emph{$(<\kappa)$-ary} if all its symbols have arity strictly less than $\kappa$. A \emph{finitary vocabulary} is a $(<\aleph_0)$-ary vocabulary.
\end{defin}

For $\tau$ an infinitary vocabulary, $\phi$ an $\Ll_{\kappa, \kappa} (\tau)$-formula and $\bx$ a sequence of variables, we write $\phi = \phi (\bx)$ to emphasize that the free variables of $\phi$ appear among $\bx$ (recall that a $\Ll_{\kappa, \kappa}$-formula must have fewer than $\kappa$-many free variables, but not all elements of $\bx$ need to appear as free variables in $\phi$, so we allow $\ell(\bx) \ge \kappa$). We use a similar notation for sets of formulas. When $\ba$ is an element in some $\tau$-structure and $\phi (\bx, \by)$ is a formula, we often abuse notation and say that $\psi (\bx) = \phi (\bx, \ba)$ is a formula (again, we allow $\ell (\ba) \ge \kappa$). We say $\phi (\bx, \ba)$ is a \emph{formula over $A$} if $\ba \in \fct{<\infty}{A}$.

\begin{defin}
  For $\phi$ a formula over a set, let $\FV{\phi}$ denote an enumeration of the free variables of $\phi$ (according to some canonical ordering on all variables). That is, fixing such an ordering, $\FV{\phi}$ is the smallest sequence $\bx$ such that $\phi = \phi (\bx)$. Let $\ell (\phi) := \ell (\FV{\phi})$ (it is an ordinal, but by permutting the variables we can usually assume without loss of generality that it is a cardinal), and $\dom{\phi}$ be the smallest set $A$ such that $\phi$ is over $A$. Define similarly the meaning of $\FV{p}$, $\ell (p)$, and $\dom{p}$ on a set $p$ of formulas.
\end{defin}

\begin{defin}
For $\tau$ an infinitary vocabulary, $M$ a $\tau$-structure, $A \subseteq |M|$, $\bb \in \fct{<\infty}{|M|}$, and $\Delta$ a set of $\tau$-formulas (in some logic), let\footnote{Of course, we have in mind a canonical sequence of variables $\bx$ of order type $\ell (\bb)$ that should really be part of the notation but (as is customary) we always omit this detail.}:

$$
\tp_{\Delta} (\bb / A; M) := \{\phi (\bx; \ba) \mid \phi (\bx, \by) \in \Delta\text{, } \ba \in \fct{\ell (\by)}{A} \text{, and } M \models \phi[\bb, \ba]\}
$$

We will most often work with $\Delta = \qf\Ll_{\kappa, \kappa}$, the set of \emph{quantifier-free} $\Ll_{\kappa, \kappa}$-formulas.
\end{defin}

\begin{defin}\label{stone-def}
For $M$ a $\tau$-structure, $\Delta$ a set of $\tau$-formulas, $A \subseteq |M|$, $\alpha$ an ordinal or $\infty$, let 

$$
\Ss_{\Delta}^{<\alpha} (A; M) := \{\tp_{\Delta} (\bb / A; M) \mid \bb \in \fct{<\alpha}{|M|}\}
$$

Define similarly the variations for $\le \alpha$, $\alpha$, etc. We write $\Ss_{\Delta} (A; M)$ instead of $\Ss_{\Delta}^1 (A; M)$.
\end{defin}

\subsection{Abstract classes}

We review the definition of an abstract elementary class. Abstract elementary classes (AECs) were introduced by Shelah in \cite{sh88}. The reader unfamiliar with AECs can consult \cite{grossberg2002} for an introduction. 

We first review more general objects that we will sometimes use. Abstract classes are already defined in \cite{grossbergbook}, while $\mu$-abstract elementary classes are introduced in \cite{mu-aec-toappear-v2}. We will mostly use them to deal with functorial expansions and classes of saturated models of an AEC.

\begin{defin}
  An \emph{abstract class} (AC for short) is a pair $(K, \lea)$, where:

  \begin{enumerate}
    \item $K$ is a class of $\tau$-structure, for some fixed infinitary vocabulary $\tau$ (that we will denote by $\tau (K)$). We say $(K, \lea)$ is \emph{$(<\mu)$-ary} if $\tau$ is $(<\mu)$-ary.
    \item $\lea$ is a partial order (that is, a reflexive and transitive relation) on $K$.
    \item If $M \lea N$ are in $K$ and $f: N \cong N'$, then $f[M] \lea N'$ and both are in $K$.
    \item If $M \lea N$, then $M \subseteq N$.
  \end{enumerate}

\end{defin}
\begin{remark}
  We do not always strictly distinguish between $K$ and $(K, \lea)$.
\end{remark}
\begin{notation}
  For $K$ an abstract class, $M, N \in K$, we write $M \lta N$ when $M \lea N$ and $M \neq N$.
\end{notation}

\begin{defin}\label{r-increasing-def}
  Let $K$ be an abstract class. A sequence $\seq{M_i : i < \delta}$ of elements of $K$ is \emph{increasing} if for all $i < j < \delta$, $M_i \lea M_j$. \emph{Strictly increasing} means $M_i \lta M_j$ for $i < j$. $\seq{M_i : i < \delta}$ is \emph{continuous} if for all limit $i < \delta$, $M_i = \bigcup_{j < i} M_j$.
\end{defin}

\begin{notation}
  For $K$ an abstract class, we use notations such as $K_\lambda$, $K_{\ge \lambda}$, $K_{<\lambda}$ for the models in $K$ of size $\lambda$, $\ge \lambda$, $<\lambda$, respectively.
\end{notation}

\begin{defin}
Let $(I,\le)$ be a partially-ordered set.  

\begin{enumerate}
  \item We say that $I$ is  \emph{$\mu$-directed} provided for every $J\subseteq I$ if $|J|<\mu$ then there exists $r\in I$ such that $r\geq s$ for all $s\in J$ (thus $\aleph_0$-directed is the usual notion of directed set)
  \item Let $(K,\lea)$ be an abstract class. An indexed system $\seq{M_i : i \in I}$ of models in $K$ is \emph{$\mu$-directed} if $I$ is a $\mu$-directed set and $i < j$ implies $M_i \lea M_j$.
\end{enumerate}
\end{defin}
	
\begin{defin}\label{mu-aec-def}
  Let $\mu$ be a regular cardinal and let $(K, \lea)$ be a $(<\mu)$-ary abstract class.  We say that $(K, \lea)$ is a $\mu$-\emph{abstract elementary class} ($\mu$-AEC for short) if:
	
\begin{enumerate}
    \item Coherence: If $M_0, M_1, M_2 \in K$ satisfy $M_0 \lea M_2$, $M_1 \lea M_2$, and $M_0 \subseteq M_1$, then $M_0 \lea M_1$;

    \item Tarski-Vaught axioms: Suppose $\seq{M_i \in K : i \in I}$ is a $\mu$-directed system. Then:

        \begin{enumerate}

            \item $\bigcup_{i \in I} M_i \in K$ and, for all $j \in I$, we have $M_j \lea \bigcup_{i \in I} M_i$.

            \item If there is some $N \in K$ so that for all $i \in I$ we have $M_i \lea N$, then we also have $\bigcup_{i \in I} M_i \lea N$.

        \end{enumerate}

    \item Löwenheim-Skolem-Tarski axiom: There exists a cardinal $\lambda = \lambda^{<\mu} \ge |L(K)| + \mu$ such that for any $M \in K$ and $A \subseteq |M|$, there is some $M_0 \lea M$ such that $A \subseteq |M_0|$ and $\|M_0\| \le |A|^{<\mu} + \lambda$. We write $\LS (K)$ for the minimal such cardinal\footnote{Pedantically, $\LS (K)$ really depends on $\mu$ but $\mu$ will always be clear from context.}.
\end{enumerate}

When $\mu = \aleph_0$, we omit it and simply call $K$ an \emph{abstract elementary class} (AEC for short).
\end{defin}

In any abstract class, we can define a notion of embedding:

\begin{defin}
  Let $K$ be an abstract class. We say a function $f: M \rightarrow N$ is a \emph{$K$-embedding} if $M, N \in K$ and $f: M \cong f[M] \lea N$. For $A \subseteq |M|$, we write $f: M \xrightarrow[A]{} N$ to mean that $f$ fixes $A$ pointwise. Unless otherwise stated, when we write $f: M \rightarrow N$ we mean that $f$ is an embedding.
\end{defin}

Here are three key structural properties an abstract class can have:

\begin{defin}
  Let $K$ be an abstract class.
  \begin{enumerate}
    \item $K$ has \emph{amalgamation} if for any $M_0 \lea M_\ell$ in $K$, $\ell = 1,2$, there exists $N \in K$ and $f_\ell : M_\ell \xrightarrow[M_0]{} N$.
    \item $K$ has \emph{joint embedding} if for any $M_\ell$ in $K$, $\ell = 1,2$, there exists $N \in K$ and $f_\ell : M_\ell \rightarrow N$.
    \item $K$ has \emph{no maximal models} if for any $M \in K$ there exists $N \in K$ with $M \lta N$.
  \end{enumerate}
\end{defin}

\subsection{Galois types}

Let $K$ be an abstract class. There is a well-known a semantic notion of types for $K$, Galois types, that was first introduced by Shelah in \cite[Definition II.1.9]{sh300-orig}. While Galois types are usually only defined over models, here we allow them to be over any set. This is not harder and is often notationally convenient\footnote{For example, types over the empty sets are used here in the definition of the Galois Morleyization. They appear implicitly in the definition of the order property in \cite[Definition 4.3]{sh394} and explicitly in \cite[Notation 1.9]{tamenessone}. They are also used in \cite{finitary-aec}.}. Note however that Galois types over sets are in general not too well-behaved. For example, they can sometimes fail to have an extension (in the sense that if we have $N, N' \in \K$, $A \subseteq |N| \cap |N'|$ and $p$ a Galois type over $A$ realized in $N$, then we may not be able to extend $p$ to a type over $N'$) if their domain is not an amalgamation base.

\begin{defin}\label{gtp-def} \
  \begin{enumerate}
    \item Let $K^3$ be the set of triples of the form $(\bb, A, N)$, where $N \in K$, $A \subseteq |N|$, and $\bb$ is a sequence of elements from $N$. 
    \item For $(\bb_1, A_1, N_1), (\bb_2, A_2, N_2) \in K^3$, we say $(\bb_1, A_1, N_1)E_{\text{at}} (\bb_2, A_2, N_2)$ if $A := A_1 = A_2$, and there exists $f_\ell : N_\ell \xrightarrow[A]{} N$ such that $f_1 (\bb_1) = f_2 (\bb_2)$.
    \item Note that $E_{\text{at}}$ is a symmetric and reflexive relation on $K^3$. We let $E$ be the transitive closure of $E_{\text{at}}$.
    \item For $(\bb, A, N) \in K^3$, let $\gtp (\bb / A; N) := [(\bb, A, N)]_E$. We call such an equivalence class a \emph{Galois type}. We write $\gtp_K (\bb / A; N)$ when $K$ is not clear from context.
    \item For $p = \gtp (\bb / A; N)$ a Galois type, define\footnote{It is easy to check that this does not depend on the choice of representatives.} $\ell (p) := \ell (\bb)$ and $\dom{p} := A$.
  \end{enumerate}
\end{defin}

We can go on to define the restriction of a type (if $A_0 \subseteq \dom{p}$, $I \subseteq \ell (p)$, we will write $p^I \rest A_0$ when the realizing sequence is restricted to $I$ and the domain is restricted to $A_0$), the image of a type under an isomorphism, or what it means for a type to be realized. Just as in \cite[Observation II.1.11.4]{shelahaecbook}, we have:

\begin{fact}\label{ap-eat}
  If $K$ has amalgamation, then $E = E_{\text{at}}$.
\end{fact}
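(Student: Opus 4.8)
The plan is to show that amalgamation forces $E_{\text{at}}$ to be transitive; since $E_{\text{at}}$ is already reflexive and symmetric, its transitive closure $E$ then coincides with $E_{\text{at}}$ itself. So I would start from $(\bb_1, A, N_1)\, E_{\text{at}}\, (\bb_2, A, N_2)$ and $(\bb_2, A, N_2)\, E_{\text{at}}\, (\bb_3, A, N_3)$ (note that the middle coordinate is forced to be one common set $A$) and fix witnesses $f_\ell : N_\ell \xrightarrow[A]{} N$ for $\ell = 1,2$ with $f_1(\bb_1) = f_2(\bb_2)$, and $g_\ell : N_\ell \xrightarrow[A]{} N'$ for $\ell = 2,3$ with $g_2(\bb_2) = g_3(\bb_3)$. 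The goal is then to produce $N^\ast \in K$ and $K$-embeddings $h_1 : N_1 \xrightarrow[A]{} N^\ast$, $h_3 : N_3 \xrightarrow[A]{} N^\ast$ with $h_1(\bb_1) = h_3(\bb_3)$.

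The idea is to amalgamate $N$ and $N'$ over the common copy of $N_2$. First I would set $M_0 := f_2[N_2]$, which lies in $K$, satisfies $M_0 \lea N$ (because $f_2$ is a $K$-embedding), and contains $A$ (because $f_2$ fixes $A$ pointwise). The composite $\phi := g_2 \circ f_2^{-1} : M_0 \to N'$ is an embedding with $\phi[M_0] \lea N'$ and $\phi \rest A = \id_A$. Using the renaming axiom for abstract classes I would replace $N'$ by an isomorphic copy $N''$ via some $\psi : N' \cong N''$ chosen so that $\psi \circ \phi$ is the inclusion of $M_0$ into $N''$; thus $M_0 \lea N''$, $\psi \rest A = \id_A$, and $\psi(g_2(\bb_2)) = \psi(\phi(f_2(\bb_2))) = f_2(\bb_2)$. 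Now amalgamation applied to $M_0 \lea N$ and $M_0 \lea N''$ yields $N^\ast \in K$ together with $r : N \xrightarrow[M_0]{} N^\ast$ and $s : N'' \xrightarrow[M_0]{} N^\ast$. Setting $h_1 := r \circ f_1$ and $h_3 := s \circ \psi \circ g_3$, both are $K$-embeddings into $N^\ast$ fixing $A$ pointwise (since $f_1, g_3, \psi$ fix $A$ and $r, s$ fix $M_0 \supseteq A$), and since the entries of $f_2(\bb_2)$ lie in $M_0$, which $r$ and $s$ fix,
\[
h_1(\bb_1) = r\bigl(f_2(\bb_2)\bigr) = f_2(\bb_2) = s\bigl(f_2(\bb_2)\bigr) = s\bigl(\psi(g_3(\bb_3))\bigr) = h_3(\bb_3),
\]
so $(\bb_1, A, N_1)\, E_{\text{at}}\, (\bb_3, A, N_3)$, establishing transitivity.

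I expect the only genuine friction to be the renaming step: amalgamation as stated returns maps fixing a common \emph{substructure}, so to apply it one must first arrange, up to isomorphism, that a suitable copy of $N_2$ sits as a substructure inside both $N$ and $N'$, i.e.\ prove the routine fact that amalgamation over substructures implies amalgamation over embeddings with a common base. Once that is in place, checking that each composite still fixes $A$ pointwise and that the images of $\bb_1$ and $\bb_3$ agree (both equal $f_2(\bb_2)$) is pure bookkeeping. This is exactly the argument behind \cite[Observation II.1.11.4]{shelahaecbook}.
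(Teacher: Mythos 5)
Your argument is correct and is essentially the standard proof the paper defers to via \cite[Observation II.1.11.4]{shelahaecbook}: show $E_{\text{at}}$ is transitive by renaming so that a common copy $f_2[N_2]$ of $N_2$ sits as a $\lea$-substructure of both witnesses and then amalgamating over it. In particular, you amalgamate only over the model $M_0 = f_2[N_2] \in K$, which is exactly the point of the paper's remark that amalgamation over models suffices even though types are over arbitrary sets.
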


Note that the proof goes through, even though we only have amalgamation over models, not over all sets.

\begin{remark}
  To gain further insight into the difference between $E$ and $E_{\text{at}}$, consider the following situation. Let $K$ be an AEC that does \emph{not} have amalgamation and assume we are given $M \lea N$, $a_1, a_2 \in |M|$, and $A \subseteq |M|$. Suppose we know that $(a_1, A, M) E_{\text{at}} (a_2, A, M)$. Then because $(a_\ell, A, N) E_{\text{at}} (a_\ell, A, M)$ for $\ell = 1,2$, we have that $(a_1, A, N) E (a_2, A, N)$, but we may \emph{not} have that $(a_1, A, N) E_{\text{at}} (a_2, A, N)$.
\end{remark}

We also have the basic monotonicity and invariance properties \cite[Observation II.1.11]{shelahaecbook}, which follow directly from the definition:

\begin{prop}\label{galois-types-basic}
  Let $K$ be an abstract class. Let $N \in K$, $A \subseteq |N|$, and $\bb \in \fct{<\infty}{|N|}$.

  \begin{enumerate}
  \item\label{basic-1} Invariance: If $f: N \cong_A N'$, then $\gtp (\bb / A; N) = \gtp (f (\bb) / A; N')$.
  \item\label{basic-2} Monotonicity: If $N \lea N'$, then $\gtp (\bb / A; N) = \gtp (\bb / A; N')$.
  \end{enumerate}
\end{prop}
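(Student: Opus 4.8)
The plan is to derive both statements directly from Definition \ref{gtp-def}, with no appeal to amalgamation or to Fact \ref{ap-eat}. The key reduction is that since $E$ is the transitive closure of $E_{\text{at}}$ and in particular contains $E_{\text{at}}$, it suffices in each case to exhibit a \emph{single} common extension witnessing that the two relevant triples of $K^3$ are $E_{\text{at}}$-related; the witnessing embeddings in both cases will be (restrictions/corestrictions of) $f$ and identity maps, and the only thing to check is that these are $K$-embeddings fixing $A$ pointwise.

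For invariance, suppose $f : N \cong_A N'$. First note that since $f$ fixes $A$ pointwise and maps into $N'$, we have $A = f[A] \subseteq |N'|$, so $(f(\bb), A, N') \in K^3$ is well-formed. Now take $N$ and $N'$ to be amalgamated over $A$ into $N'$ itself, via $f_1 := f : N \xrightarrow[A]{} N'$ and $f_2 := \id_{N'} : N' \xrightarrow[A]{} N'$. Here $f_1$ is a $K$-embedding because $f : N \cong f[N] = N' \lea N'$, using reflexivity of $\lea$, and $f_2$ is a $K$-embedding for the same reason; both fix $A$ pointwise (the former by hypothesis, the latter trivially). Since $f_1(\bb) = f(\bb) = \id_{N'}(f(\bb)) = f_2(f(\bb))$, we conclude $(\bb, A, N)\, E_{\text{at}}\, (f(\bb), A, N')$, hence the two Galois types are equal.

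For monotonicity, suppose $N \lea N'$; then $A \subseteq |N| \subseteq |N'|$, so $(\bb, A, N')\in K^3$. Amalgamate $N$ and $N'$ over $A$ into $N'$ via $f_1 := \id_N : N \xrightarrow[A]{} N'$, which is a $K$-embedding precisely because $N \lea N'$, and $f_2 := \id_{N'} : N' \xrightarrow[A]{} N'$. Since $f_1(\bb) = \bb = f_2(\bb)$, we get $(\bb, A, N)\, E_{\text{at}}\, (\bb, A, N')$, so $\gtp(\bb/A;N) = \gtp(\bb/A;N')$.

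I do not expect any genuine obstacle here: the statement is immediate from the definitions once the right witnesses are chosen. The only points deserving a moment's care are confirming that $f[A] \subseteq |N'|$ in the invariance case (so that the target triple even lies in $K^3$) and observing that it is reflexivity of $\lea$, together with clause (3) of the definition of an abstract class, that legitimizes $f$ and the identity maps as $K$-embeddings.
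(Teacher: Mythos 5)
Your proof is correct and matches the paper's approach: the paper simply asserts these properties "follow directly from the definition," and your argument is exactly the natural unpacking of that claim, exhibiting in each case a single $E_{\text{at}}$-witness (via $f$ and identity maps into $N'$) with the right well-formedness checks.
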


Monotonicity says that when $N \lea N'$, the set of Galois types (over a fixed set $A$) realized in $N'$ is at least as big as the set of Galois types over $A$ realized in $N$ (using the notation below, $\gS (A; N) \subseteq \gS (A; N')$). When $A = M$ for $M \lea N$ (or $A = \emptyset$), we can further define the class $\gS (A)$ of \emph{all} Galois types over $A$ in the natural way. Assuming the existence of a monster model $\sea$ containing $A$, this is the same as the usual definition: all types over $A$ realized in $\sea$.

\begin{defin} \
  \begin{enumerate}
    \item Let $N \in K$, $A \subseteq |N|$, and $\alpha$ be an ordinal. Define:

      $$
      \gS^\alpha (A; N) := \{\gtp (\bb / A; N) \mid \bb \in \fct{\alpha}{|N|}\}
      $$
    \item For $M \in K$ and $\alpha$ an ordinal, let:

      $$
      \gS^\alpha (M) := \{p \mid \exists N \in K : M \lea N \text { and } p \in \gS^{\alpha} (M ; N)\}
      $$

    \item For $\alpha$ an ordinal, let:

      $$
      \gS^\alpha (\emptyset) := \bigcup_{N \in K} \gS^{\alpha} (\emptyset; N)
      $$
  \end{enumerate}

  When $\alpha = 1$, we omit it. Similarly define $\gS^{<\alpha}$, where $\alpha$ is allowed to be $\infty$.
\end{defin}
\begin{remark}
  When $\alpha$ is an ordinal, $\gS^{\alpha} (M)$ and $\gS^{\alpha} (\emptyset)$ could a priori be proper classes. However in reasonable cases (e.g.\ when $K$ is a $\mu$-AEC) they are sets. For example when $K$ is a $\mu$-AEC, an upper bound for $|\gS^{\alpha} (M)|$ is $2^{\left(\|M\| + \alpha + \LS (\K)\right)^{<\mu}}$.
\end{remark}

Next, we recall the definition of tameness), a locality property of types. Tameness was introduced by Grossberg and VanDieren in \cite{tamenessone} and used to get an upward stability transfer (and an upward categoricity transfer in \cite{tamenesstwo}). Later on, Boney showed in \cite{tamelc-jsl} that it followed from large cardinals and also introduced a dual property he called \emph{type shortness}. 

\begin{defin}[Definitions 3.1 and 3.3 in \cite{tamelc-jsl}]\label{shortness-def}
  Let $K$ be an abstract class and let $\Gamma$ be a class (possibly proper) of Galois types in $K$. Let $\kappa$ be an infinite cardinal.

  \begin{enumerate}
    \item $K$ is \emph{$(<\kappa)$-tame for $\Gamma$} if for any $p \neq q$ in $\Gamma$, if $A := \dom{p} = \dom{q}$, then there exists $A_0 \subseteq A$ such that $|A_0| < \kappa$ and $p \rest A_0 \neq q \rest A_0$.
    \item $K$ is \emph{$(<\kappa)$-type short for $\Gamma$} if for any $p \neq q$ in $\Gamma$, if $\alpha := \ell (p) = \ell (q)$, then there exists $I \subseteq \alpha$ such that $|I| < \kappa$ and $p^I \neq q^I$.
    \item $\kappa$-tame means $(<\kappa^+)$-tame, similarly for type short.
    \item We usually just say ``short'' instead of ``type short''.
    \item Usually, $\Gamma$ will be a class of types over models only, and we often specify it in words. For example, \emph{$(<\kappa)$-short for types of length $\alpha$} means $(<\kappa)$-short for $\bigcup_{M \in K} \gS^\alpha (M)$. 
    \item We say $K$ is $(<\kappa)$-tame if it is $(<\kappa)$-tame for types of length one.
    \item We say $K$ is \emph{fully} $(<\kappa)$-tame if it is $(<\kappa)$-tame for $\bigcup_{M \in K} \gS^{<\infty} (M)$, similarly for short. 
  \end{enumerate}
\end{defin}

We review the natural notion of stability in this context. The definition here is slightly unusual compared to the rest of the litterature: we define what it means for a \emph{model} to be stable in a given cardinal, and get a local notion of stability that is equivalent (in AECs) to the usual notion if amalgamation holds, but behaves better if amalgamation fails. Note that we count the number of types over an arbitrary set, not (as is common in AECs) only over models. In case the abstract class has a Löwenheim-Skolem number and we work above it this is equivalent, as any type in $\gS^{<\alpha} (A; N)$ can be extended\footnote{Note that this does not use any amalgamation because we work inside the same model $N$.} to $\gS^{<\alpha} (B; N)$ when $A \subseteq B$, so $|\gS^{<\alpha} (A; N)| \le |\gS^{<\alpha} (B; N)|$.

\begin{defin}[Stability]\label{stab-def}
  Let $K$ be an abstract class. Let $\alpha$ be a cardinal, $\mu$ be a cardinal. A model $N \in K$ is $(<\alpha)$-\emph{stable in $\mu$} if for all $A \subseteq |N|$ of size $\le \mu$, $|\gS^{<\alpha} (A; N)| \le \mu$. Here and below, $\alpha$-stable means $(< (\alpha^+))$-stable. We say ``stable'' instead of ``1-stable''.

  $K$ is \emph{$(<\alpha)$-stable in $\mu$} if every $N \in K$ is $(<\alpha)$-stable in $\mu$. $K$ is \emph{$(<\alpha)$-stable} if it is $(<\alpha)$-stable in unboundedly many cardinals.

  Define similarly \emph{syntactically stable} for syntactic types (in this paper the quantifier-free $\Ll_{\kappa, \kappa}$-types, where $\kappa$ is clear from context).
\end{defin}

The next fact spells out the connection between stability for types of different lengths and tameness.

\begin{fact}\label{stab-facts} 
  Let $K$ be an AEC and let $\mu \ge \LS (K)$.
  \begin{enumerate}
    \item \label{stab-facts-tb} \cite[Theorem 3.1]{longtypes-toappear-v2}: If $K$ is stable in $\mu$, $K_\mu$ has amalgamation, and $\mu^\alpha = \mu$, then $K$ is $\alpha$-stable in $\mu$.
    \item \label{stab-facts-tame} \cite[Corollary 6.4]{tamenessone}\footnote{The result we want can easily be seen to follow from the proof there: see \cite[Theorem 12.10]{baldwinbook09}.}: If $K$ has amalgamation, is $\mu$-tame, and stable in $\mu$, then $K$ is stable in all $\lambda$ such that $\lambda^\mu = \lambda$.
    \item \label{stab-length-equiv} If $K$ has amalgamation, is $\mu$-tame, and is stable in $\mu$, then $K$ is $\alpha$-stable (in unboundedly many cardinals), for all cardinals $\alpha$.
  \end{enumerate}
\end{fact}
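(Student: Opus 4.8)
The plan is to derive the statement by combining parts \eqref{stab-facts-tb} and \eqref{stab-facts-tame} of the same Fact, feeding the output of one into the input of the other, and iterating up the cardinals. Fix an arbitrary cardinal $\alpha$; I want to show $K$ is $\alpha$-stable in unboundedly many cardinals. Starting from the hypothesis that $K$ is stable in $\mu$, the first move is to apply part \eqref{stab-facts-tame}: since $K$ has amalgamation, is $\mu$-tame, and is stable in $\mu$, it is stable in every $\lambda$ with $\lambda^\mu = \lambda$. This already gives a proper class of cardinals $\lambda$ in which $K$ is (length-$1$) stable.

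Next, given such a $\lambda$, I would like to upgrade stability in $\lambda$ to $\alpha$-stability in $\lambda$ using part \eqref{stab-facts-tb}. That part requires $\lambda \ge \LS(K)$ (true once we pick $\lambda$ large enough), $K_\lambda$ has amalgamation (inherited from amalgamation of $K$), $K$ stable in $\lambda$ (just established), and crucially $\lambda^\alpha = \lambda$. So the real point is to choose $\lambda$ satisfying simultaneously $\lambda^\mu = \lambda$ and $\lambda^\alpha = \lambda$; equivalently $\lambda^{\mu + \alpha} = \lambda$. Such $\lambda$ exist unboundedly: for any cardinal $\chi$, take $\lambda := \chi^{<(\mu + \alpha)^+} = \chi^{\mu + \alpha}$ (or simply $\lambda = (\sup_n \chi_n)$ where $\chi_0 = \chi + \mu + \alpha + \LS(K)$ and $\chi_{n+1} = \chi_n^{\mu+\alpha}$), which is $> \chi$ and satisfies $\lambda^{\mu+\alpha} = \lambda$. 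For each such $\lambda$ we then get that $K$ is $\alpha$-stable in $\lambda$, and since these $\lambda$ are unbounded, $K$ is $\alpha$-stable. As $\alpha$ was arbitrary, we are done.

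The argument is essentially bookkeeping with cardinal arithmetic, so I do not anticipate a genuine obstacle; the only thing to be careful about is the order of application. One might be tempted to apply part \eqref{stab-facts-tb} first (to $\mu$ itself), but that needs $\mu^\alpha = \mu$, which is not given, so instead one must first spread stability out to the cardinals closed under $\mu$-th powers via part \eqref{stab-facts-tame}, and only then cut down to those that are also closed under $\alpha$-th powers before invoking part \eqref{stab-facts-tb}. A minor point worth stating explicitly is that $\lambda^{\mu+\alpha} = \lambda$ implies both $\lambda^\mu = \lambda$ and $\lambda^\alpha = \lambda$ (monotonicity of cardinal exponentiation in the exponent, together with $\lambda \le \lambda^\mu \le \lambda^{\mu+\alpha}$), which is what licenses using the same $\lambda$ in both invocations.
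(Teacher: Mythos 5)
Your argument is correct and is essentially the paper's own proof: the paper, given $\lambda_0 \ge \LS(K)$ and $\alpha$, simply sets $\lambda := \lambda_0^{\alpha+\mu}$ and combines parts (\ref{stab-facts-tame}) and (\ref{stab-facts-tb}) exactly as you do. One small caveat: your parenthetical alternative $\lambda = \sup_n \chi_n$ does not work, since that $\lambda$ has cofinality $\omega$ and hence $\lambda^{\mu+\alpha} \ge \lambda^{\aleph_0} > \lambda$; but your main choice $\lambda = \chi^{\mu+\alpha}$ is fine, so the proof stands.
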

\begin{proof}[Proof of (\ref{stab-length-equiv})]
  Given cardinals $\lambda_0 \ge \LS (K)$ and $\alpha$, let $\lambda := \left(\lambda_0\right)^{\alpha + \mu}$. Combining the first two statements gives us that $K$ is $\alpha$-stable in $\lambda$.
\end{proof}

Finally, we review the natural definition of saturation using Galois types. Note that we again give the local definitions (but they are equivalent to the usual ones assuming amalgamation).

\begin{defin}
  Let $K$ be an abstract class, $M \in K$ and $\mu$ be an infinite cardinal. 
  \begin{enumerate}
    \item For $N \gea M$, $M$ is \emph{$\mu$-saturated\footnote{Pedantically, we should really say ``Galois-saturated'' to differentiate this from being syntactically saturated. In this paper, we will only discuss Galois saturation.} in $N$} if for any $A \subseteq |M|$ of size less than $\mu$, any $p \in \gS^{<\mu} (A; N)$ is realized in $M$. 
    \item $M$ is \emph{$\mu$-saturated} if it is $\mu$-saturated in $N$ for all $N \gea M$. When $\mu = \|M\|$, we omit it.
    \item We write $\Ksatp{\mu}$ for the class of $\mu$-saturated models of $K_{\ge \mu}$ (ordered by the ordering of $K$).
  \end{enumerate}
\end{defin}
\begin{remark}\label{sat-rmk} \
  \begin{enumerate}
    \item We defined saturation also when $\mu \le \LS (K)$. This is why we look at types over sets and not only over models. In an AEC, when $\mu > \LS (K)$, this is equivalent to the usual definition (see also the remark before Definition \ref{stab-def}).
    \item We could similarly define what it means for a \emph{set} to be saturated in a model (this is useful in \cite{bv-sat-v3}).
    \item It is easy to check that if $K$ is an AEC with amalgamation and $\mu > \LS (K)$, then $\Ksatp{\mu}$ is a $\plus{\mu}$-AEC (recall Definitions \ref{kappa-r-def} and \ref{mu-aec-def}) with $\LS (\Ksatp{\mu}) \le \LS (K)^{<\plus{\mu}}$.
  \end{enumerate}
\end{remark}

\section{The semantic-syntactic correspondence}\label{sec-foundations}

\subsection{Functorial expansions and the Galois Morleyization}

\begin{defin}\label{expansion-def}
  Let $K$ be an abstract class. A \emph{functorial expansion} of $K$ is a class $\bigK$ satisfying the following properties:

  \begin{enumerate}
    \item $\bigK$ is a class of $\bigtau$-structures, where $\bigtau$ is a fixed (possibly infinitary) vocabulary extending $\tau (\K)$.
    \item The map $\bigM \mapsto \bigM \rest \tau (\K)$ is a bijection from $\bigK$ onto $K$. For $M \in K$, we will write $\bigM$ for the unique element of $\bigK$ whose reduct is $M$. When we write ``$\bigM \in \bigK$'', it is understood that $M = \bigM \rest \tau (K)$.
    \item Invariance: For $M, N \in K$, if $f: M \cong N$, then $f: \bigM \cong \bigN$.
    \item Monotonicity: If $M \lea N$ are in $K$, then $\bigM \subseteq \bigN$.
  \end{enumerate}

  We say a functorial expansion $\bigK$ is \emph{$(<\kappa)$-ary} if $\tau(\bigK)$ is $(<\kappa)$-ary.
\end{defin}

\begin{example}\label{morleyization-example} \
  \begin{enumerate}
    \item For $K$ an abstract class, $K$ is a functorial expansion of $K$ itself. This is because $\lea$ must extend $\subseteq$.
    \item Let $K$ be an abstract class with $\tau := \tau (K)$ and let $\kappa$ be an infinite cardinal. Add a $(<\kappa)$-ary predicate $P$ to $\tau$, forming a language $\bigtau$. Expand each $M \in K$ to a $\bigL$-structure by defining $P^{\bigM} (\ba)$ (where $P^{\bigM}$ is the interpretation of $P$ inside $\bigM$) to hold if and only if $\ba$ is the universe of a $\lea$-submodel of $M$ (this is more or less what Shelah does in \cite[Definition IV.1.9.1]{shelahaecbook}). Then the resulting class $\bigK$ is a functorial expansion of $K$.
    \item Let $T$ be a complete first-order theory in a vocabulary $\tau$. Let $K := (\text{Mod} (T), \lee)$. It is common to expand $\tau$ to $\bigtau$ by adding a relation symbol for every first-order $\tau$-formula. We then expand $T$ (to $\widehat{T}$) and every model $M$ of $T$ in the expected way (to some $\bigM$) and obtain a new theory in which every formula is equivalent to an atomic one (this is commonly called the \emph{Morleyization} of the theory). Then $\bigK := \text{Mod} (\widehat{T})$ is a functorial expansion of $K$.
    \item Let $T$ be a first-order complete theory. Expanding each model $M$ of $T$ to its canonical model $M^{\text{eq}}$ of $T^{\text{eq}}$ (see \cite[III.6]{shelahfobook}) also describes a functorial expansion.
    \item The canonical structures of \cite{cherlin-harrington-lachlan} also induce a functorial expansion.
  \end{enumerate}
\end{example}

The main example of functorial expansion used in this paper is the \emph{Galois Morleyization}:

\begin{defin}\label{def-galois-m}
Let $K$ be an abstract class and let $\kappa$ be an infinite cardinal. Define an expansion $\bigtau$ of $\tau (K)$ by adding a relation symbol $R_p$ of arity $\ell (p)$ for each $p \in \gS^{<\kappa} (\emptyset)$. Expand each $N \in K$ to a $\bigtau$-structure $\bigN$ by specifying that for each $\ba \in \fct{<\kappa}{|\bigN|}$, $R_p^{\bigN} (\ba)$ (where $R_p^{\bigN}$ is the interpretation of $R_p$ inside $\bigN$) holds exactly when $\gtp (\ba / \emptyset; N) = p$. We call $\bigK$ the \emph{$(<\kappa)$-Galois Morleyization} of $K$.
\end{defin}
\begin{remark}\label{bigk-size}
  Let $K$ be an AEC and $\kappa$ be an infinite cardinal. Let $\bigK$ be the $(<\kappa)$-Galois Morleyization of $K$. Then $|\tau (\bigK)| \le |\gS^{<\kappa} (\emptyset)| + |\tau| \le 2^{<(\kappa + \LS (K)^+)}$.
\end{remark}

It is straightforward to check that the Galois Morleyization is a functorial expansion. We include a proof here for completeness.

\begin{prop}
  Let $K$ be an abstract class and let $\kappa$ be an infinite cardinal. Let $\bigK$ be the $(<\kappa)$-Galois Morleyization of $K$. Then $\bigK$ is a functorial expansion of $K$.
\end{prop}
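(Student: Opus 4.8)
The plan is to verify the four conditions in Definition~\ref{expansion-def} directly from Definition~\ref{def-galois-m}, since each one is essentially a bookkeeping consequence of the way the new predicates $R_p$ are interpreted. Most of the work is isolating the right invariance property of Galois types, which is already available as Proposition~\ref{galois-types-basic}.

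First, condition (1) is immediate: $\bigtau$ is obtained from $\tau(K)$ by adding the symbols $R_p$ for $p \in \gS^{<\kappa}(\emptyset)$, so $\bigtau \supseteq \tau(K)$ is a fixed vocabulary. Condition (2) is also essentially by construction: for each $N \in K$ the interpretation of every $R_p$ is completely determined by the rule ``$R_p^{\bigN}(\ba)$ holds iff $\gtp(\ba/\emptyset;N) = p$'', so there is exactly one $\bigtau$-structure $\bigN$ with $\bigN \rest \tau(K) = N$, and conversely every element of $\bigK$ arises this way; hence $\bigM \mapsto \bigM \rest \tau(K)$ is a bijection $\bigK \to K$.

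For condition (3), suppose $f : M \cong N$ is an isomorphism of $\tau(K)$-structures with $M, N \in K$. We must check $f$ respects each $R_p$. Fix $p \in \gS^{<\kappa}(\emptyset)$ and $\ba \in \fct{\ell(p)}{|M|}$. By Proposition~\ref{galois-types-basic}(\ref{basic-1}) (invariance, applied with $A = \emptyset$), $\gtp(\ba/\emptyset;M) = \gtp(f(\ba)/\emptyset;N)$. Therefore $R_p^{\bigM}(\ba)$ holds iff $\gtp(\ba/\emptyset;M) = p$ iff $\gtp(f(\ba)/\emptyset;N) = p$ iff $R_p^{\bigN}(f(\ba))$ holds; since $f$ was already a $\tau(K)$-isomorphism, it follows that $f : \bigM \cong \bigN$. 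For condition (4), suppose $M \lea N$ in $K$; then $M \subseteq N$ as $\tau(K)$-structures, so it remains to see that the interpretations of the $R_p$ agree on tuples from $M$. Given $p \in \gS^{<\kappa}(\emptyset)$ and $\ba \in \fct{\ell(p)}{|M|}$, Proposition~\ref{galois-types-basic}(\ref{basic-2}) (monotonicity) gives $\gtp(\ba/\emptyset;M) = \gtp(\ba/\emptyset;N)$, so $R_p^{\bigM}(\ba)$ holds iff $R_p^{\bigN}(\ba)$ holds, i.e.\ $\bigM \subseteq \bigN$. Finally, since the arity of each $R_p$ is $\ell(p) < \kappa$, the vocabulary $\bigtau$ is $(<\kappa)$-ary, so $\bigK$ is a $(<\kappa)$-ary functorial expansion.

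There is no real obstacle here: the statement is a routine unwinding of definitions, and the only point requiring any thought is recognizing that conditions (3) and (4) are exactly the invariance and monotonicity clauses of Proposition~\ref{galois-types-basic} applied with $A = \emptyset$. The one thing to be slightly careful about is that Galois types over $\emptyset$ are well-defined objects in an arbitrary abstract class (no amalgamation is needed for Definition~\ref{gtp-def}), so every $R_p$ does receive a genuine interpretation in every $N \in K$; this is implicit in the construction but worth keeping in mind.
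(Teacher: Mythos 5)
Your proof is correct and follows essentially the same route as the paper: a direct verification of the four clauses of Definition~\ref{expansion-def}, with the reduct map being a bijection because the interpretations of the $R_p$ are uniquely determined by the rule defining them, and with clauses (3) and (4) reduced to the invariance and monotonicity of Galois types from Proposition~\ref{galois-types-basic}. No gaps; your uniqueness argument for clause (2) is if anything stated a bit more cleanly than the paper's.
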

\begin{proof}
    Let $\tau := \tau (\K)$ be the vocabulary of $\K$. Looking at Definition \ref{expansion-def}, there are four properties to check:

    \begin{enumerate}
    \item By definition of the Galois Morleyization, $\bigK$ is a class of $\bigtau$-structure, for a fixed vocabulary $\bigtau$.
    \item The map $\bigM \mapsto \bigM \rest \tau$ is a bijection: It is a surjection by definition of the Galois Morleyization. It is an injection: Assume that $M' := \bigM \rest \tau = \bigN \rest \tau$ but $\bigM \neq \bigN$. Then there must exist a $p \in \gS (\emptyset)$ and an $\ba \in \fct{<\kappa}{|M'|}$ such that $\gtp (\ba / \emptyset; M') = p$ but $\gtp (\ba / \emptyset; M') \neq p$. Thus $p \neq p$, a contradiction.
    \item Let $M, N \in \K$ and $f: M \cong N$. We have to see that $f: \bigM \cong \bigN$. Let $p \in \gS (\emptyset)$ and let $\ba \in \fct{<\kappa}{|M|}$. Assume that $\bigM \models R_p (\ba)$. Then by definition $p = \gtp (\ba / \emptyset; M)$. Therefore by Proposition \ref{galois-types-basic}.(\ref{basic-1}), $p = \gtp (f (\ba) / \emptyset; N)$. Hence $\bigN \models R_p (f (\ba))$. The steps can be reversed to obtain the converse.
    \item Let $M \lea N$ be in $\K$. We want to see that $\bigM \subseteq \bigN$. So let $p \in \gS (\emptyset)$, $\ba \in \fct{<\kappa}{|M|}$. Assume first that $\bigM \models R_p (\ba)$. Then $p = \gtp (\ba / \emptyset; M)$. Therefore by Proposition \ref{galois-types-basic}.(\ref{basic-2}), $p = \gtp (\ba / \emptyset; N)$. Therefore $\bigN \models R_p (\ba)$. The steps can be reversed to obtain the converse.
    \end{enumerate}  
\end{proof}

Note that a functorial expansion can naturally be seen as an abstract class:

\begin{defin}
  Let $(K, \lea)$ be an abstract class and let $\bigK$ be a functorial expansion of $K$. Define an ordering $\widehat{\lea}$ on $\bigK$ by $\bigM \widehat{\lea} \bigN$ if and only if $M \lea N$.
\end{defin}
\begin{remark}
  For simplicity, we will abuse notation and write $(\bigK, \lea)$ rather than $(\bigK, \widehat{\lea})$. As usual, when the ordering is clear from context we omit it.
\end{remark}

The next propositions are easy but conceptually interesting.

\begin{prop}\label{expansion-monot} 
  Let $(K, \lea)$ be an abstract class with $\tau := \tau (K)$. Let $\bigK$ be a functorial expansion of $K$ and let $\bigtau := \tau (\bigK)$.

  \begin{enumerate}
    \item $(\bigK, \lea)$ is an abstract class.
    \item If every chain in $K$ has an upper bound, then every chain in $\bigK$ has an upper bound.
    \item Galois types are the same in $K$ and $\bigK$: $\gtp_{K} (\ba_1 / A; N_1) = \gtp_{K} (\ba_2 / A; N_2)$ if and only if $\gtp_{\bigK} (\ba_1 / A; \bigN_1) = \gtp_{\bigK} (\ba_2 / A; \bigN_2)$.
    \item Assume $K$ is a $\mu$-AEC and $\bigK$ is a $(<\mu)$-ary Morleyization of $K$. Then $(\bigK, \lea)$ is a $\mu$-AEC with $\LS (\bigK) = \LS (K) + |\bigtau|^{<\mu}$.
    \item Let $\tau \subseteq \bigtau' \subseteq \bigtau$. Then $\bigK \rest \bigtau' := \{\bigM \rest \bigtau' \mid \bigM \in \bigK\}$ is a functorial expansion of $K$.
    \item If $\widehat{\bigK}$ is a functorial expansion\footnote{Where of course we think of $\bigK$ as an abstract class with the ordering induced from $K$.} of $\bigK$, then $\widehat{\bigK}$ is a functorial expansion of $K$.
  \end{enumerate}
\end{prop}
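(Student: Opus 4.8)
The statement is Proposition \ref{expansion-monot}, which collects six easy but conceptually useful facts about functorial expansions. The plan is to verify each clause in turn, in the given order, using only the defining properties of a functorial expansion (Definition \ref{expansion-def}) and the already-established monotonicity and invariance of Galois types (Proposition \ref{galois-types-basic}).

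For clause (1), I would check the four axioms of an abstract class for $(\bigK, \widehat{\lea})$: reflexivity and transitivity of $\widehat{\lea}$ are inherited directly from $\lea$ on $K$ via the definition $\bigM \widehat{\lea} \bigN \iff M \lea N$; closure under isomorphism follows from the Invariance clause of Definition \ref{expansion-def} together with closure under isomorphism in $K$; and $\bigM \widehat{\lea} \bigN \implies \bigM \subseteq \bigN$ is precisely the Monotonicity clause of Definition \ref{expansion-def}. For clause (2), given a chain $\seq{\bigM_i : i < \delta}$ in $\bigK$, its reduct $\seq{M_i : i < \delta}$ is a chain in $K$, so it has an upper bound $N \in K$; then $\bigN$ (the unique expansion of $N$) is an upper bound of the original chain, since $M_i \lea N$ gives $\bigM_i \widehat{\lea} \bigN$ by definition. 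For clause (3), the forward direction is immediate from Invariance and Monotonicity of the expansion (an $E_{\text{at}}$-witness $f_\ell : N_\ell \xrightarrow[A]{} N$ in $K$ lifts to one in $\bigK$), and the backward direction follows because any $\bigK$-embedding is in particular a $\tau(K)$-embedding after reduct, so a $\bigK$-witness to $E_{\text{at}}$ reducts to a $K$-witness; since $E$ is the transitive closure of $E_{\text{at}}$ in both cases, the two equivalence relations coincide.

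For clause (4), assuming $K$ is a $\mu$-AEC and $\bigK$ is $(<\mu)$-ary, I would verify the three $\mu$-AEC axioms of Definition \ref{mu-aec-def}. Coherence: if $\bigM_0, \bigM_1, \bigM_2 \in \bigK$ with $\bigM_0 \widehat{\lea} \bigM_2$, $\bigM_1 \widehat{\lea} \bigM_2$, $\bigM_0 \subseteq \bigM_1$, then reducting gives $M_0 \lea M_2$, $M_1 \lea M_2$, $M_0 \subseteq M_1$ (the last because reduct preserves substructure), so coherence in $K$ yields $M_0 \lea M_1$, hence $\bigM_0 \widehat{\lea} \bigM_1$. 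The Tarski–Vaught axioms transfer the same way, using that the union of a directed system of $\bigtau$-structures reducts to the union of the reducts, together with the bijectivity clause of Definition \ref{expansion-def} to identify the union's expansion with the appropriate element of $\bigK$. For the Löwenheim–Skolem–Tarski axiom: given $\bigM \in \bigK$ and $A \subseteq |\bigM| = |M|$, apply LST in $K$ to get $M_0 \lea M$ with $A \subseteq |M_0|$ and $\|M_0\| \le |A|^{<\mu} + \LS(K)$; then $\bigM_0 \widehat{\lea} \bigM$, and the bound on $\LS(\bigK)$ comes from also needing $\LS(\bigK) \ge |\bigtau|^{<\mu}$ (so that the language-size requirement in the LST axiom is met), giving $\LS(\bigK) = \LS(K) + |\bigtau|^{<\mu}$; I would also note the need to check $\LS(\bigK)^{<\mu} = \LS(\bigK)$, which holds since both $\LS(K)$ and $|\bigtau|^{<\mu}$ are closed under $(<\mu)$-sized sequences. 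Clauses (5) and (6) are essentially bookkeeping: for (5), restricting to $\bigtau'$ with $\tau \subseteq \bigtau' \subseteq \bigtau$ preserves all four properties of Definition \ref{expansion-def} since reduct commutes with composition and preserves substructure and isomorphism; for (6), composing two functorial expansions is again functorial, because the composite of two bijective reduct maps is bijective and the Invariance/Monotonicity conditions compose.

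I do not expect a genuine obstacle here — every clause reduces to transporting a property of $K$ through the reduct map and back through the bijection $\bigM \mapsto M$. The only point requiring a little care is clause (4), where one must not merely transfer the axioms abstractly but also track the cardinal arithmetic for $\LS(\bigK)$ and confirm the $(<\mu)$-closure condition $\lambda = \lambda^{<\mu}$ demanded by the LST axiom; this is where the hypothesis that $\bigK$ is $(<\mu)$-ary is actually used (so that $\Ll_{\mu,\mu}$-style unions of size $<\mu$ stay inside $\bigK$, and so that $|\bigtau|^{<\mu}$ is the right correction term).
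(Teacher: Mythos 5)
Your proposal is correct in outline and follows essentially the same route as the paper: the paper declares all six clauses straightforward and only writes out one verification, namely the single point in clause (4) that is not pure transport, and that is exactly the spot where your sketch is too quick. For the Tarski--Vaught axioms in $\bigK$ one must show that the union $\bigcup_{i \in I} \bigM_i$ of the $\bigtau$-structures \emph{is} the canonical expansion $\bigM$ of $M := \bigcup_{i \in I} M_i$; your appeal to ``the bijectivity clause of Definition \ref{expansion-def} to identify the union's expansion with the appropriate element of $\bigK$'' does not do this, since the bijectivity clause only says there is a unique member of $\bigK$ with reduct $M$ --- it does not rule out that the union of the expansions interprets some relation symbol by a strictly smaller set than $\bigM$ does (the remark immediately after the proposition warns that for infinitary $\bigtau$ a colimit in $\bigK$ need not be the union, precisely because relations may need more tuples). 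The actual argument, which is the content of the paper's proof, is: given a relation symbol $R \in \bigtau$ of arity $\alpha$ and $\ba \in \fct{\alpha}{|\bigM|}$, the $(<\mu)$-arity hypothesis gives $\alpha < \mu$, so by $\mu$-directedness $\ba$ lies inside some $M_j$; then $M_j \lea M$ and the monotonicity axiom of functorial expansions give $\bigM_j \subseteq \bigM$, so $\bigM \models R[\ba]$ iff $\bigM_j \models R[\ba]$ iff $\bigcup_{i \in I} \bigM_i \models R[\ba]$. You do note at the end that $(<\mu)$-arity ``keeps unions inside $\bigK$,'' so you have located the right hypothesis, but the proof should contain this tuple-chasing step rather than the bijectivity appeal. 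The remaining clauses ((1)--(3), (5), (6)) and your cardinal bookkeeping for $\LS(\bigK)$ in clause (4) are fine and match what the paper leaves to the reader.
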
 
\begin{proof}
  All are straightforward. As an example, we show that if $K$ is a $\mu$-AEC, $\bigK$ is a $(<\mu)$-ary functorial expansion of $K$, and $\seq{\bigM_i : i \in I}$ is a $\mu$-directed system in $\bigK$, then letting $M := \bigcup_{i \in I} M_i$, we have that $\bigcup_{i \in I} \bigM_i = \bigM$ (so in particular $\bigcup_{i \in I} \bigM_i \in \bigK$). Let $R$ be a relation symbol in $\bigtau$ of arity $\alpha$. Let $\ba \in \fct{\alpha}{|\bigM|}$. Assume $\bigM \models R[\ba]$. We show $\bigcup_{i \in I} \bigM_i \models R[\ba]$. The converse is done by replacing $R$ by $\neg R$, and the proof with function symbols is similar. Since $\bigtau$ is $(<\mu)$-ary, $\alpha < \mu$. Since $I$ is $\mu$-directed, $\ba \in \fct{\alpha}{|M_j|}$ for some $j \in I$. Since $M_j \lea M$, the monotonicity axiom implies $\bigM_j \subseteq \bigM$. Thus $\bigM_j \models R[\ba]$, and this holds for all $j' \ge j$. Thus by definition of the union, $\bigcup_{i \in I} \bigM_i \models R[\ba]$.
\end{proof}
\begin{remark}
  A word of warning: if $K$ is an AEC and $\bigK$ is a functorial expansion of $K$, then $K$ and $\bigK$ are isomorphic as categories. In particular, any directed system in $\bigK$ has a colimit. However, if $\tau (\bigK)$ is not finitary the colimit of a directed system in $\bigK$ may \emph{not} be the union: relations may need to contain more elements.
\end{remark}

\subsection{Formulas and syntactic types}

From now on until the end of the section, we assume:

\begin{hypothesis}\label{ftypes-hyp}
  $K$ is an abstract class with $\tau := \tau (K)$, $\kappa$ is an infinite cardinal, $\bigK$ is an arbitrary $(<\kappa)$-ary functorial expansion of $K$ with vocabulary $\bigtau := \tau (\bigK)$.
\end{hypothesis}

At the end of this section, we will specialize to the case when $\bigK$ is the $(<\kappa)$-Galois Morleyization of $K$. Recall from Section \ref{syntax-subsec} that the set $\qf\Ll_{\kappa, \kappa} (\bigtau)$ denotes the quantifier-free $L_{\kappa, \kappa} (\bigtau)$ formulas.

\begin{prop}\label{inv-lem}
  Let $\phi (\bx)$ be a quantifier-free $\Ll_{\kappa, \kappa} (\bigtau)$ formula, $M \in K$, and $\ba \in M$. If $f: M \rightarrow N$, then $\bigM \models \phi[\ba]$ if and only if $\bigN \models \phi[f (\ba)]$.
\end{prop}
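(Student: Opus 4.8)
The plan is to prove this by induction on the structure of the quantifier-free $\Ll_{\kappa,\kappa}(\bigtau)$-formula $\phi$, after first reducing to two basic cases. Since $f : M \to N$ is by convention an embedding, we may factor it as an isomorphism $f : M \cong f[M]$ followed by the inclusion $f[M] \lea N$ (note $f[M] \in K$ since $f[M] \lea N$ and $K$ is closed under $\lea$-submodels). So it suffices to prove the statement separately when (a) $f$ is an isomorphism onto $N$, and (b) $f$ is the identity and $M \lea N$.

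For case (a): by the invariance clause in the definition of a functorial expansion (Definition \ref{expansion-def}(3)), $f : \bigM \cong \bigN$. It is then a completely standard fact (by induction on formulas, valid for all of $\Ll_{\kappa,\kappa}$, not just quantifier-free formulas) that isomorphisms preserve satisfaction: $\bigM \models \phi[\ba]$ iff $\bigN \models \phi[f(\ba)]$.

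For case (b): by the monotonicity clause (Definition \ref{expansion-def}(4)), $\bigM \subseteq \bigN$ as $\bigtau$-structures. Now I would induct on the quantifier-free $\Ll_{\kappa,\kappa}(\bigtau)$-formula $\phi$. For the atomic case, one first checks that for any $\bigtau$-term $t(\bx)$ and $\ba \in {}^{\ell(\bx)}|\bigM|$, the value $t^{\bigM}[\ba]$ equals $t^{\bigN}[\ba]$; this is because substructures are closed under the interpretations of all function symbols (including the infinitary ones), proved by a sub-induction on terms. Atomic formulas $R(t_1, \dots)$ and $t_1 = t_2$ are then absolute between $\bigM$ and $\bigN$ whenever the parameters lie in $|\bigM|$, directly from the definition of substructure together with the previous point. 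Negation is handled by taking complements (the biconditional is symmetric), and conjunctions/disjunctions of size $< \kappa$ by applying the induction hypothesis to each conjunct/disjunct and using that ``for all $i$'' and ``there exists $i$'' commute with a biconditional holding uniformly in $i$. Since $\phi$ is quantifier-free there is no quantifier step, so the induction closes.

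The argument is essentially routine; the only point that requires a little care is the atomic case in (b), specifically making sure that term evaluation really does agree between a structure and its substructure \emph{when the function symbols may have infinite arity} — this is where one uses that the arities are ordinals and that $\bigM \subseteq \bigN$ means the interpretations literally restrict. Everything else is the standard preservation-under-substructure/isomorphism bookkeeping for $\Ll_{\kappa,\kappa}$.
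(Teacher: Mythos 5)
Your proof is correct and follows exactly the route the paper intends: the paper's proof is just the one-line remark that the statement follows directly from the invariance and monotonicity clauses of functorial expansions, and your write-up factors the embedding as an isomorphism followed by an inclusion and spells out the standard preservation-of-quantifier-free-formulas details that the paper leaves implicit.
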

\begin{proof}
  Directly from the invariance and monotonicity properties of functorial expansions.
\end{proof}

In general, Galois types (computed in $\K$) and syntactic types (computed in $\bigK$) are different. However, Galois types are always at least as fine as quantifier-free syntactic types (this is a direct consequence of Proposition \ref{inv-lem} but we include a proof for completeness).

\begin{lem}\label{gtp-map}
  Let $N_1, N_2 \in K$, $A \subseteq |N_\ell|$ for $\ell = 1,2$. Let $\bb_\ell \in N_\ell$. If $\gtp (\bb_1 / A; N_1) = \gtp (\bb_2 / A; N_2)$, then $\qtp (\bb_1 / A; \bigN_1) = \qtp (\bb_2 / A; \bigN_2)$.
\end{lem}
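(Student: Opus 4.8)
The plan is to reduce to the atomic equivalence relation $E_{\text{at}}$ and then invoke Proposition \ref{inv-lem}. Recall that $\gtp (\bb_1 / A; N_1) = \gtp (\bb_2 / A; N_2)$ means $(\bb_1, A, N_1) \mathrel{E} (\bb_2, A, N_2)$, where $E$ is the transitive closure of $E_{\text{at}}$. So first I would prove the statement under the stronger hypothesis $(\bb_1, A, N_1) \mathrel{E_{\text{at}}} (\bb_2, A, N_2)$, and then chain along a finite $E_{\text{at}}$-path to get the general case (syntactic type equality is itself transitive, so once each link is handled the conclusion follows).

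For the atomic case, fix $N \in K$ and $K$-embeddings $f_\ell : N_\ell \xrightarrow[A]{} N$ with $f_1 (\bb_1) = f_2 (\bb_2)$. Let $\phi (\bx, \by) \in \qf\Ll_{\kappa, \kappa} (\bigtau)$ and $\ba \in \fct{\ell (\by)}{A}$. Applying Proposition \ref{inv-lem} to $f_\ell$ (with the tuple $\bb_\ell \ba$ in place of $\ba$) gives $\bigN_\ell \models \phi[\bb_\ell, \ba]$ if and only if $\bigN \models \phi[f_\ell (\bb_\ell), f_\ell (\ba)]$. Since $f_\ell$ fixes $A$ pointwise, $f_\ell (\ba) = \ba$, and since $f_1 (\bb_1) = f_2 (\bb_2)$, the two conditions $\bigN \models \phi[f_1 (\bb_1), \ba]$ and $\bigN \models \phi[f_2 (\bb_2), \ba]$ are literally the same. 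Hence $\bigN_1 \models \phi[\bb_1, \ba]$ if and only if $\bigN_2 \models \phi[\bb_2, \ba]$, for every such $\phi$ and $\ba$; that is exactly $\qtp (\bb_1 / A; \bigN_1) = \qtp (\bb_2 / A; \bigN_2)$.

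I do not expect a serious obstacle here: the content is entirely contained in Proposition \ref{inv-lem} (which in turn is just invariance plus monotonicity of functorial expansions), and the only mild subtlety is bookkeeping — making sure one may feed the concatenated tuple $\bb_\ell \ba$ into Proposition \ref{inv-lem} (its free variables still number fewer than $\kappa$ since $\phi$ is an $\Ll_{\kappa,\kappa}$-formula) and remembering that $f_\ell$ pins down the parameters because it fixes $A$. The passage from $E_{\text{at}}$ to its transitive closure $E$ is routine once the single-step case is in hand.
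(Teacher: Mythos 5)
Your proposal is correct and matches the paper's own argument: reduce to a single $E_{\text{at}}$-step by transitivity, then transfer satisfaction of each quantifier-free formula along the embeddings $f_\ell$ via Proposition \ref{inv-lem}, using that $f_\ell$ fixes $A$ and $f_1(\bb_1)=f_2(\bb_2)$. The only difference is cosmetic: you carry the parameters $\ba$ explicitly, whereas the paper absorbs them into a formula over $A$ and handles the converse by replacing $\phi$ with $\neg\phi$.
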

\begin{proof}
  By transitivity of equality, it is enough to show that if $(\bb_1, A, N_1) E_{\text{at}} (\bb_2, A, N_2)$, then $\qtp (\bb_1 / A; \bigN_1) = \qtp (\bb_2 / A; \bigN_2)$. So assume $(\bb_1, A, N_1) E_{\text{at}} (\bb_2, A, N_2)$. Then there exists $N \in K$ and $f_\ell : N_\ell \xrightarrow[A]{} N$ such that $f_1 (\bb_1) = f_2 (\bb_2)$. Let $\phi (\bx)$ be a quantifier-free $\Ll_{\kappa,\kappa} (\bigtau)$ formula over $A$. Assume $\bigN_1 \models \phi[\bb_1]$. By Proposition \ref{inv-lem}, $\bigN \models \phi[f_1 (\bb_1)]$, so $\bigN \models \phi[f_2 (\bb_2)]$, so by Proposition \ref{inv-lem} again, $\bigN_2 \models \phi[\bb_2]$. Replacing $\phi$ by $\neg \phi$, we get the converse, so $\qtp (\bb_1 / A; \bigN_1) = \qtp (\bb_2 / A; \bigN_2)$.
\end{proof}

Note that this used that the types were quantifier-free. We have justified the following definition:

\begin{defin}
  For a Galois type $p$, let $p^s$ be the corresponding quantifier-free syntactic type \emph{in the functorial expansion}. That is, if $p = \gtp (\bb / A; N)$, then $p^s := \qtp (\bb / A; \bigN)$.
\end{defin}





\begin{prop}\label{gtp-ftp-map}
  Let $N \in K$, $A \subseteq |N|$. Let $\alpha$ be an ordinal. The map $p \mapsto p^s$ from $\gS^\alpha (A; N)$ to $\Ss_{\qfl}^\alpha (A; \bigN)$ (recall Definition \ref{stone-def}) is a surjection.
\end{prop}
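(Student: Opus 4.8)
The plan is simply to unwind the two definitions involved and observe that surjectivity is immediate, because passing to the functorial expansion does not change underlying universes. So every quantifier-free syntactic type over $A$ realized in $\bigN$ is realized by some sequence $\bb$ drawn from the universe of $\bigN$, and that same $\bb$ — viewed as a sequence from $N$ — supplies a Galois type that maps onto it.

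In detail, I would first fix an arbitrary $q \in \Ss_{\qfl}^\alpha (A; \bigN)$. By Definition \ref{stone-def}, $q = \qtp (\bb / A; \bigN)$ for some $\bb \in \fct{\alpha}{|\bigN|}$. Since the reduct map sending $\bigN$ to $N$ leaves the universe unchanged, $|\bigN| = |N|$, so $\bb \in \fct{\alpha}{|N|}$ and hence $p := \gtp (\bb / A; N)$ is a well-defined member of $\gS^\alpha (A; N)$. By the definition of $p^s$ — which is legitimate precisely by Lemma \ref{gtp-map} — we get $p^s = \qtp (\bb / A; \bigN) = q$. As $q$ was arbitrary, the map $p \mapsto p^s$ is onto.

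I do not anticipate a genuine obstacle here: the only point requiring care is that $p \mapsto p^s$ is well-defined at all, and that is exactly the content of Lemma \ref{gtp-map}, which is already in hand. It is worth noting that the map is in general far from injective — distinct Galois types may well induce the same quantifier-free syntactic type — and measuring when it is injective is precisely what the semantic–syntactic correspondence does; but none of that is needed for the surjectivity statement.
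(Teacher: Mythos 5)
Your proof is correct and is essentially the same as the paper's: given $q = \qtp (\bb / A; \bigN)$, the type $p := \gtp (\bb / A; N)$ satisfies $p^s = q$ by definition of $p^s$, whose well-definedness is Lemma \ref{gtp-map}. The paper's proof is this same one-line observation, so nothing further is needed.
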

\begin{proof} 
  If $\qtp (\bb / A; \bigN) = q \in \Ss_{\qfl}^\alpha (A; \bigN)$, then by definition $\left(\gtp (\bb / A; N)\right)^s = q$.
\end{proof}

\begin{remark}
  To investigate formulas with quantifiers, we could define a different version of Galois types using isomorphisms rather than embeddings, and remove the monotonicity axiom from the definition of a functorial expansion. As we have no use for it here, we do not discuss this approach further.
\end{remark}

\subsection{On when Galois types are syntactic}

We have seen in Proposition \ref{gtp-ftp-map} that $p \mapsto p^s$ is a surjection, so Galois types are always at least as fine as quantifier-free syntactic type in the expansion. It is natural to ask when they are the same, i.e.\ when $p \mapsto p^s$ is a \emph{bijection}. When $\bigK$ is the $(<\kappa)$-Galois Morleyization of $K$ (see Definition \ref{def-galois-m}), we answer this using shortness and tameness (Definition \ref{shortness-def}). Note that we make no hypothesis on $K$. In particular, amalgamation is not needed.

\begin{thm}[The semantic-syntactic correspondence]\label{separation}
  Assume $\bigK$ is the $(<\kappa)$-Galois Morleyization of $\K$.

  Let $\Gamma$ be a family of Galois types. The following are equivalent:

  \begin{enumerate}
    \item\label{separation-1} $K$ is $(<\kappa)$-tame and short for $\Gamma$.
    \item\label{separation-2} The map $p \mapsto p^s$ is a bijection from $\Gamma$ onto $\Gamma^s := \{p^s \mid p \in \Gamma\}$.
  \end{enumerate}
\end{thm}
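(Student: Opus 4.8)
The plan is to reduce everything to a characterization of when $p^s = q^s$. Surjectivity of $p \mapsto p^s$ onto $\Gamma^s$ holds by definition of $\Gamma^s$ (as in Proposition \ref{gtp-ftp-map}), so the content of the theorem is that $p \mapsto p^s$ is injective on $\Gamma$ if and only if $K$ is $(<\kappa)$-tame and short for $\Gamma$. Moreover, if $p \neq q$ lie in $\Gamma$ with $\ell (p) \neq \ell (q)$ or $\dom p \neq \dom q$, then $p^s \neq q^s$ for trivial reasons (the length is visible in the free variables, and $R_{\gtp (a / \emptyset; N_1)} (a) \in p^s$ for every $a \in \dom p$, so $\dom{p^s} = \dom p$), while tameness and shortness for $\Gamma$ impose no condition on such a pair. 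So it suffices to treat $p = \gtp (\bb_1 / A; N_1)$ and $q = \gtp (\bb_2 / A; N_2)$ with $\ell (p) = \ell (q) = \alpha$ and $\dom p = \dom q = A$.

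For such $p, q$ I would prove the equivalence
$$
p^s = q^s \iff p^I \rest A_0 = q^I \rest A_0 \text{ for all } I \subseteq \alpha \text{ and } A_0 \subseteq A \text{ with } |I| < \kappa \text{ and } |A_0| < \kappa .
$$
For ``$\Leftarrow$'': every quantifier-free $\Ll_{\kappa, \kappa} (\bigtau)$-formula over $A$ has $<\kappa$ free variables and $<\kappa$ parameters, hence is (after discarding inessential variables) a quantifier-free $\Ll_{\kappa, \kappa} (\bigtau)$-formula over some $A_0 \subseteq A$ of size $<\kappa$ in variables indexed by some $I \subseteq \alpha$ of size $<\kappa$; if $p^I \rest A_0 = q^I \rest A_0$, then Lemma \ref{gtp-map} applied to $\bb_1 \rest I$ and $\bb_2 \rest I$ over $A_0$ gives $\qtp (\bb_1 \rest I / A_0; \bigN_1) = \qtp (\bb_2 \rest I / A_0; \bigN_2)$, and since the formula lies in $p^s$ iff it lies in $\qtp (\bb_1 \rest I / A_0; \bigN_1)$ (similarly for $q$) it cannot separate $p^s$ from $q^s$. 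For ``$\Rightarrow$'', in contrapositive form: if $p^I \rest A_0 \neq q^I \rest A_0$ for some small $I, A_0$, let $\ba_0$ enumerate $A_0$ and $r := \gtp ((\bb_1 \rest I) {}^\frown \ba_0 / \emptyset; N_1)$; then $r \in \gS^{<\kappa} (\emptyset)$ (as $|I| + |A_0| < \kappa$), the atomic formula $R_r (\langle x_i : i \in I \rangle {}^\frown \ba_0)$ lies in $p^s$, and it lies in $q^s$ iff $\gtp ((\bb_2 \rest I) {}^\frown \ba_0 / \emptyset; N_2) = r$, which — using the standard fact that $\gtp (\bb / A_0; N)$ and $\gtp (\bb {}^\frown \ba_0 / \emptyset; N)$ determine each other — would force $p^I \rest A_0 = q^I \rest A_0$, a contradiction; so this atomic formula separates $p^s$ from $q^s$.

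Granting the displayed equivalence, both directions of the theorem are short. For (2) $\Rightarrow$ (1): given $p \neq q$ in $\Gamma$, injectivity gives $p^s \neq q^s$, so the equivalence yields small $I, A_0$ with $p^I \rest A_0 \neq q^I \rest A_0$; since $(p \rest A_0)^I = p^I \rest A_0$ and $(p^I) \rest A_0 = p^I \rest A_0$, this gives $p \rest A_0 \neq q \rest A_0$ and $p^I \neq q^I$, i.e.\ exactly the tameness and shortness conditions for this pair. For (1) $\Rightarrow$ (2): given $p \neq q$ in $\Gamma$ of common length $\alpha$ and domain $A$, apply $(<\kappa)$-shortness for $\Gamma$ to get $I \subseteq \alpha$ of size $<\kappa$ with $p^I \neq q^I$, then apply $(<\kappa)$-tameness to the pair $p^I, q^I$ to get $A_0 \subseteq A$ of size $<\kappa$ with $p^I \rest A_0 \neq q^I \rest A_0$, and conclude $p^s \neq q^s$.

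The step I expect to be the main obstacle is the last one in (1) $\Rightarrow$ (2): one applies tameness to the pair of \emph{restricted} types $p^I, q^I$, so one is really using tameness for types of length $|I| < \kappa$. In the applications the relevant family is closed under restricting the length (e.g.\ $\Gamma$ the class of all Galois types, or $\bigcup_M \gS^{<\infty} (M)$, over which ``fully tame and short'' makes $p^I, q^I$ again lie in $\Gamma$), so this is precisely an instance of the hypothesis, but the write-up should make this usage explicit. A second, more routine point is the ``standard fact'' that $\gtp (\bb / A_0; N)$ is interdefinable with $\gtp (\bb {}^\frown \ba_0 / \emptyset; N)$: since $K$ carries no amalgamation and hence $E$ is only the transitive closure of $E_{\text{at}}$, proving this requires relabeling each model appearing in an $E_{\text{at}}$-chain so that its marked copy of $A_0$ is literally $A_0$; this is harmless and uses only that $\lea$ refines $\subseteq$.
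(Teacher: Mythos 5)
Your proof is correct and follows essentially the same route as the paper: the separating atomic formula $R_r$ built from a small combined restriction, together with Lemma \ref{gtp-map}, is exactly the paper's argument, and your intermediate equivalence ($p^s = q^s$ iff all $(<\kappa)$-sized restrictions of $p$ and $q$ agree) is just a repackaging of the content the paper records in the remark following Corollary \ref{types-formula}. The two points you flag (applying tameness to the length-restricted pair $p^I, q^I$, and the $E$ versus $E_{\text{at}}$ relabeling behind the interdefinability of $\gtp (\bb / A_0; N)$ and $\gtp (\bb \ba_0 / \emptyset; N)$) are likewise passed over silently in the paper's own write-up, so they are not deviations but legitimate clarifications.
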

\begin{proof} \
  \begin{itemize}
    \item \underline{(\ref{separation-1}) implies (\ref{separation-2}):}
      By Lemma \ref{gtp-map}, the map $p \mapsto p^s$ with domain $\Gamma$ is well-defined and it is clearly a surjection onto $\Gamma^s$. It remains to see it is injective. Let $p, q \in \Gamma$ be distinct. If they do not have the same domain or the same length, then $p^s \neq q^s$, so assume that $A := \dom{p} = \dom{q}$ and $\alpha := \ell (p) = \ell (q)$. Say $p = \gtp (\bb / A; N)$, $q = \gtp (\bb' / A; N')$. By the tameness and shortness hypotheses, there exists $A_0 \subseteq A$ and $I \subseteq \alpha$ of size less than $\kappa$ such that $p_0 := p^I \rest A_0 \neq q^I \rest A_0 =: q_0$. Let $\ba_0$ be an enumeration of $A_0$, and let $\bb_0 := \bb \rest I$, $\bb_0' := \bb' \rest I$. Let $p_0' := \gtp (\bb_0 \ba_0 / \emptyset; N)$, and let $\phi := R_{p_0'} (\bx_0, \ba_0)$, where $\bx_0$ is a sequence of variables of type $I$. Since $\bb_0$ realizes $p_0$ in $N$, $\bigN \models \phi[\bb_0]$, and since $\bb_0'$ realizes $q_0$ in $N'$ and $q_0 \neq p_0$, $\widehat{N'} \models \neg \phi[\bb_0']$. Thus $\phi (\bx_0) \in p^s$, $\neg \phi (\bx_0) \in q^s$. By definition, $ \phi (\bx_0) \notin q$ so $p^s \neq q^s$.

    \item \underline{(\ref{separation-2}) implies (\ref{separation-1}):} Let $p, q \in \Gamma$ be distinct with domain $A$ and length $\alpha$. Say $p = \gtp (\bb / A; N)$, $q = \gtp (\bb' / A; N')$. By hypothesis, $p^s \neq q^s$ so there exists $\phi (\bx)$ over $A$ such that (without loss of generality) $\phi (\bx) \in p$ but $\neg \phi (\bx) \in q$. Let $A_0 := \dom{\phi}$, $\bx_0 := \FV{\phi}$ (note that $A_0$ and $\bx_0$ have size strictly less than $\kappa$). Let $\bb_0$, $\bb_0'$ be the corresponding subsequences of $\bb$ and $\bb'$ respectively. Let $p_0 := \gtp (\bb_0 / A_0; N)$, $q_0 := \gtp (\bb_0' / A_0; N')$. Then it is straightforward to check that $\phi \in p_0^s$, $\neg \phi \in q_0^s$, so $p_0^s \neq q_0^s$ and hence (by Lemma \ref{gtp-map}) $p_0 \neq q_0$. Thus $A_0$ and $I$ witness tameness and shortness respectively.
  \end{itemize}
\end{proof}
\begin{remark}
  The proof shows that (\ref{separation-2}) implies (\ref{separation-1}) is valid when $\bigK$ is any functorial expansion of $K$.
\end{remark}

\begin{cor}\label{types-formula} \
  Assume $\bigK$ is the $(<\kappa)$-Galois Morleyization of $\K$.

  \begin{enumerate}
    \item $K$ is fully $(<\kappa)$-tame and short if and only if for any $M \in K$ the map $p \mapsto p^s$ from $\gS^{<\infty} (M)$ to $\Ss_{\qfl}^{<\infty} (M)$ is a bijection\footnote{We have set $\Ss_{\qfl}^{<\infty} (M) := \bigcup_{N \gea M} \Ss_{\qfl}^{<\infty} (M; \bigN)$. Similarly define $\Ss_{\qfl} (M)$.}. 
    \item $K$ is $(<\kappa)$-tame if and only if for any $M \in K$ the map $p \mapsto p^s$ from $\gS (M)$ to $\Ss_{\qfl} (M)$ is a bijection. 
  \end{enumerate}
\end{cor}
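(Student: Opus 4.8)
The plan is to derive both items directly from the semantic-syntactic correspondence (Theorem \ref{separation}) by instantiating $\Gamma$ appropriately and then unwinding the definitions of the relevant Stone spaces.

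For item (1), I would take $\Gamma := \bigcup_{M \in K} \gS^{<\infty}(M)$. By Definition \ref{shortness-def}, $K$ is fully $(<\kappa)$-tame and short exactly when it is $(<\kappa)$-tame and short for this $\Gamma$, and by Theorem \ref{separation} the latter is equivalent to $p \mapsto p^s$ being a bijection from $\Gamma$ onto $\Gamma^s$. It then remains to match this with the per-model statement. Surjectivity is automatic: by Proposition \ref{gtp-ftp-map}, for every $M \in K$ and every $N \gea M$ the map $p \mapsto p^s$ sends $\gS^{<\infty}(M; N)$ onto $\Ss_{\qfl}^{<\infty}(M; \bigN)$, so taking unions over $N \gea M$ it sends $\gS^{<\infty}(M)$ onto $\Ss_{\qfl}^{<\infty}(M)$, and hence $\Gamma^s = \bigcup_{M \in K} \Ss_{\qfl}^{<\infty}(M)$. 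For injectivity, one uses (as already in the proof of Theorem \ref{separation}) that a Galois type $p$ and the corresponding syntactic type $p^s$ have the same domain and the same length; consequently two elements of $\Gamma$ with the same image lie in $\gS^{<\infty}(M)$ for a common $M$, so global injectivity of $p \mapsto p^s$ on $\Gamma$ is the same as injectivity on each $\gS^{<\infty}(M)$. Combining these gives item (1).

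For item (2), I would run the identical argument with $\Gamma := \bigcup_{M \in K} \gS(M)$, the family of Galois types of length one. The one extra observation needed is that shortness is vacuous for types of length one: if $\ell(p) = \ell(q) = 1$ and $p \neq q$, then $I := 1$ already witnesses $p^I \neq q^I$. Hence "$(<\kappa)$-tame and short for $\Gamma$" collapses to "$(<\kappa)$-tame for types of length one", which is the definition of $(<\kappa)$-tame (Definition \ref{shortness-def}). The rest is as in item (1), now using Proposition \ref{gtp-ftp-map} with $\alpha = 1$ to identify $\Gamma^s$ with $\bigcup_{M \in K} \Ss_{\qfl}(M)$.

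I do not expect a serious obstacle: the corollary is designed to be a quick consequence of Theorem \ref{separation} (in particular, as there, no amalgamation is required). The only points requiring a little care are the bookkeeping needed to pass between the "global" family $\Gamma$ and the per-model reformulation — handled by the observation that $p^s$ has the same domain and length as $p$, together with Proposition \ref{gtp-ftp-map} for surjectivity — and, in item (2), the remark that shortness imposes no condition on length-one types.
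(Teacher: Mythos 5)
Your proposal is correct and follows essentially the same route as the paper, whose proof is just the one-line application of Theorem \ref{separation} to $\Gamma := \bigcup_{M \in K} \gS^{<\infty}(M)$ and $\Gamma := \bigcup_{M \in K} \gS(M)$ respectively. The extra details you supply (surjectivity via Proposition \ref{gtp-ftp-map}, the per-model decomposition, and the vacuity of shortness for length-one types) are exactly the bookkeeping the paper leaves implicit.
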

\begin{proof}
  By Theorem \ref{separation} applied to $\Gamma := \bigcup_{M \in K} \gS^{<\infty} (M)$ and $\Gamma := \bigcup_{M \in K} \gS (M)$ respectively.
\end{proof}
\begin{remark}
  For $M \in K$, $p, q \in \gS (M)$, say $pE_{<\kappa}q$ if and only if $p \rest A_0 = q \rest A_0$ for all $A_0 \subseteq |M|$ with $|A_0| < \kappa$. Of course, if $K$ is $(<\kappa)$-tame, then $E_{<\kappa}$ is just equality. More generally, the proof of Theorem \ref{separation} shows that if $\bigK$ is the $(<\kappa)$-Galois Morleyization of $K$, then $p E_{<\kappa} q$ if and only if $p^s = q^s$. Thus in that case quantifier-free syntactic types in the Morleyization can be seen as $E_{<\kappa}$-equivalence classes of Galois types. Note that $E_{<\kappa}$ appears in the work of Shelah, see for example \cite[Definition 1.8]{sh394}.
\end{remark}

\section{Order properties and stability spectrum}\label{thy-indep}

In this section, we start applying the semantic-syntactic correspondence (Theorem \ref{separation}) to prove new structural results about AECs. In the introduction, we described a three-step general method to prove a result about AECs using syntactic methods. In the proof of Theorem \ref{stab-spectrum}, Corollary \ref{syn-stab-spectrum} gives the first step, Theorem \ref{separation} gives the second, while Facts \ref{shelah-hanf} (AECs have a Hanf number for the order property) and \ref{stab-facts} (In tame AECs with amalgamation, stability behaves reasonably well) are keys for the third step.

Throughout this section, we work with the $(<\kappa)$-Galois Morleyization of a fixed AEC $K$:

\begin{hypothesis}\label{morley-hyp} \
  \begin{enumerate}
  \item $K$ is an abstract elementary class.
  \item $\kappa$ is an infinite cardinal.
  \item $\bigK$ is the $(<\kappa)$-Galois Morleyization of $K$ (recall Definition \ref{def-galois-m}). Set $\bigtau := \tau (\bigK)$.
  \end{enumerate}
\end{hypothesis}

\subsection{Several order properties}

The next definition is a natural syntactic extension of the first-order order property. A related definition appears already in \cite{sh16} and has been well studied (see for example \cite{grsh222, grsh259}).

\begin{defin}[Syntactic order property]\label{syntactic-op}
  Let $\alpha$ and $\mu$ be cardinals with $\alpha < \kappa$. A model $\bigM \in \bigK$ has the \emph{syntactic $\alpha$-order property of length $\mu$} if there exists $\seq{\ba_i : i < \mu}$ inside $\bigM$ with $\ell (\ba_i) = \alpha$ for all $i < \mu$ and a \emph{quantifier-free} $\Ll_{\kappa, \kappa} (\bigtau)$-formula $\phi (\bx, \by)$ such that for all $i, j < \mu$, $\bigM \models \phi[\ba_i, \ba_j]$ if and only if $i < j$.

  Let $\beta \le \kappa$ be a cardinal. $\bigM$ has the \emph{syntactic $(<\beta)$-order property of length $\mu$} if it has the syntactic $\alpha$-order property of length $\mu$ for some $\alpha < \beta$. $\bigM$ has the \emph{syntactic order property of length $\mu$} if it has the syntactic $(<\kappa)$-order property of length $\mu$. 

  \emph{$\bigK$ has the syntactic $\alpha$-order of length $\mu$} if some $\bigM \in \bigK$ has it. \emph{$\bigK$ has the syntactic order property} if it has the syntactic order property for every length.
\end{defin}

We emphasize that the syntactic order property is always considered inside the Galois Morleyization $\bigK$ and must be witnessed by a \emph{quantifier-free} formula. Also, since any such formula has fewer than $\kappa$ free variables, nothing would be gained by defining the  $(\alpha)$-syntactic order property for $\alpha \ge \kappa$. Thus we talk of the syntactic order property instead of the $(<\kappa)$-syntactic order property.

Arguably the most natural semantic definition of the order property in AECs appears in \cite[Definition 4.3]{sh394}. For simplicity, we have removed one parameter from the definition.

\begin{defin}
  Let $\alpha$ and $\mu$ be cardinals. A model $M \in K$ has the \emph{Galois $\alpha$-order property of length $\mu$} if there exists $\seq{\ba_i : i < \mu}$ inside $M$ with $\ell (\ba_i) = \alpha$ for all $i < \mu$, such that for any $i_0 < j_0 < \mu$ and $i_1 < j_1 < \mu$, $\gtp (\ba_{i_0} \ba_{j_0} / \emptyset; N) \neq \gtp (\ba_{j_1} \ba_{i_1} / \emptyset; N)$.

  We usually drop the ``Galois'' and define variations such as ``$K$ has the $\alpha$-order property'' as in Definition \ref{syntactic-op}.
\end{defin}

Notice that the definition of the Galois $\alpha$-order property is more general than that of the syntactic $\alpha$-order property, since $\alpha$ is not required to be less than $\kappa$. However the next result shows that the two properties are equivalent when $\alpha < \kappa$. Notice that this does not use any tameness.

\begin{prop}\label{op-equiv}
  Let $\alpha$, $\mu$, and $\lambda$ be cardinals with $\alpha < \kappa$. Let $N \in K$.

  \begin{enumerate}
    \item If $\bigN$ has the syntactic $\alpha$-order property of length $\mu$, then $N$ has the $\alpha$-order property of length $\mu$.
    \item Conversely, let $\chi := |\gS^{\alpha + \alpha} (\emptyset)|$, and assume that $\mu \ge \left(2^{\lambda + \chi}\right)^+$. If $N$ has the $\alpha$-order property of length $\mu$, then $\bigN$ has the syntactic $\alpha$-order property of length $\lambda$.
  \end{enumerate}
  
  In particular, $K$ has the $\alpha$-order property if and only if $\bigK$ has the syntactic $\alpha$-order property.
\end{prop}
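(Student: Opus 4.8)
The plan is to prove the two numbered implications and then read off the ``in particular'' clause, which will be purely formal. For the first implication I would keep the witnessing sequence $\seq{\ba_i : i < \mu}$ and its indexing fixed. If $N$ failed to have the Galois $\alpha$-order property along it, there would be $i_0 < j_0$ and $i_1 < j_1$ with $\gtp(\ba_{i_0}\ba_{j_0}/\emptyset;N) = \gtp(\ba_{j_1}\ba_{i_1}/\emptyset;N)$, and then Lemma \ref{gtp-map} (applied with $A = \emptyset$) would force the corresponding quantifier-free syntactic types in $\bigN$ to coincide. This contradicts the parameter-free quantifier-free $\Ll_{\kappa,\kappa}(\bigtau)$-formula $\phi(\bx,\by)$ witnessing the syntactic order property: $\bigN \models \phi[\ba_{i_0},\ba_{j_0}]$ since $i_0 < j_0$, while $\bigN \models \neg\phi[\ba_{j_1},\ba_{i_1}]$ since $i_1 < j_1$, so (reading $\phi$ off the concatenated tuple of variables) $\phi$ lies in the first syntactic type and $\neg\phi$ in the second.

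The converse implication is the substantive one. First I would observe that $\alpha + \alpha < \kappa$, since $\alpha < \kappa$ and $\kappa$ is an infinite cardinal; hence the $(<\kappa)$-Galois Morleyization carries a relation symbol $R_p$ of arity $\alpha + \alpha$ for each $p \in \gS^{\alpha+\alpha}(\emptyset)$, with $R_p^{\bigN}(\bc)$ holding exactly when $\gtp(\bc/\emptyset;N) = p$. Given a sequence $\seq{\ba_i : i < \mu}$ witnessing the Galois $\alpha$-order property, color each pair $i < j$ by $\gtp(\ba_i\ba_j/\emptyset;N) \in \gS^{\alpha+\alpha}(\emptyset)$; this is a coloring in at most $\chi$ colors, so since $\mu \ge (2^{\lambda+\chi})^+$ the Erd\H{o}s--Rado theorem (in the form $(2^\nu)^+ \to (\nu^+)^2_\nu$, with $\nu := \lambda+\chi$ which we may assume infinite) yields a set $W \subseteq \mu$ of order type $\lambda$, homogeneous with constant value $p_0$. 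Enumerating $W$ increasingly as $\seq{w_k : k < \lambda}$ and writing $\bc_k := \ba_{w_k}$, the claim is that $\seq{\bc_k : k < \lambda}$ together with $\phi(\bx,\by) := R_{p_0}(\bx,\by) \wedge \neg\bigwedge_{\xi<\alpha}(x_\xi = y_\xi)$ witnesses the syntactic $\alpha$-order property of length $\lambda$ in $\bigN$: for $k < l$, homogeneity gives $\gtp(\bc_k\bc_l/\emptyset;N) = p_0$, so $\bigN \models R_{p_0}[\bc_k,\bc_l]$, while the Galois $\alpha$-order property applied to $w_k < w_l$ forces $\gtp(\bc_k\bc_l/\emptyset;N) \ne \gtp(\bc_l\bc_k/\emptyset;N)$, hence $\bc_k \ne \bc_l$; for $k = l$ the second conjunct of $\phi$ fails; and for $k > l$ the Galois $\alpha$-order property applied to $w_l < w_k$ gives $\gtp(\bc_k\bc_l/\emptyset;N) \ne \gtp(\bc_l\bc_k/\emptyset;N) = p_0$, so $\bigN \not\models R_{p_0}[\bc_k,\bc_l]$.

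Finally, the ``in particular'' clause is immediate: if $\bigK$ has the syntactic $\alpha$-order property, then for each cardinal $\mu$ some $\bigN$ has it of length $\mu$, and the first implication gives $N$ the Galois $\alpha$-order property of length $\mu$; conversely, given a target length $\lambda$, the Galois $\alpha$-order property of $K$ supplies an $N$ with that property of length $\mu := (2^{\lambda+\chi})^+$, and the second implication applies. The step I expect to demand the most care is the converse implication: getting the cardinal arithmetic of the Erd\H{o}s--Rado extraction exactly right (including the degenerate case where $\lambda+\chi$ is finite, which one handles separately by the finite Ramsey theorem), and handling the infinitary tuples cleanly --- checking $\alpha + \alpha < \kappa$ so that $R_{p_0}$ genuinely belongs to $\tau(\bigK)$, that $\phi$ is a legitimate quantifier-free $\Ll_{\kappa,\kappa}$-formula, and that the diagonal $k = l$ is dealt with, which is the reason for the conjunct $\neg\bigwedge_{\xi<\alpha}(x_\xi = y_\xi)$.
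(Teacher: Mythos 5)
Your proof is correct and follows essentially the same route as the paper: the easy direction via invariance of quantifier-free formulas (Lemma \ref{gtp-map}), and the converse by applying Erd\H{o}s--Rado to the coloring $(i<j) \mapsto \gtp(\ba_i\ba_j/\emptyset;N)$ and then using the relation $R_{p}$ as the witnessing formula. The only difference is your extra conjunct $\neg\bigwedge_{\xi<\alpha}(x_\xi=y_\xi)$ to handle the diagonal, a harmless refinement (the paper's bare $R_p$ also works, since equality patterns of coordinates are preserved by Galois-type equality).
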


\begin{proof} \
  \begin{enumerate}
    \item This is a straightforward consequence of Proposition \ref{inv-lem}\footnote{We are using that everything in sight is quantifier-free. Note that this part works for any functorial expansion $\bigK$ of $K$.}.
    \item Let $\seq{\ba_i : i < \mu}$ witness that $N$ has the Galois $\alpha$-order property of length $\mu$. By the Erdős-Rado theorem used on the coloring $(i < j) \mapsto \gtp (\ba_i \ba_j / \emptyset; N)$, we get that (without loss of generality), $\seq{\ba_i : i < \lambda}$ is such that whenever $i < j$, $\gtp (\ba_i \ba_j / \emptyset; N) = p \in \gS^{\alpha + \alpha} (\emptyset)$. But then (since by assumption $\gtp (\ba_i \ba_j / \emptyset; N) \neq \gtp (\ba_j \ba_i / \emptyset; N)$), $\phi (\bx, \by) := R_p (\bx, \by)$ witnesses that $\bigN$ has the syntactic $\alpha$-order property of length $\lambda$.
  \end{enumerate}
\end{proof}

We will see later (Theorem \ref{stab-spectrum}) that assuming some tameness, \emph{even when $\alpha \ge \kappa$}, the $\alpha$-order property implies the syntactic order property.

In the next section, we heavily use the assumption of no syntactic order property \emph{of length $\kappa$}. We now look at how that assumption compares to the order property (of arbitrary long length). Note that Proposition \ref{op-equiv} already tells us that the $(<\kappa)$-order property implies the syntactic order property of length $\kappa$. To get an equivalence, we will assume $\kappa$ is a fixed point of the Beth function. The key is:

\begin{fact}\label{shelah-hanf}
  Let $\alpha$ be a cardinal. If $K$ has the $\alpha$-order property of length $\mu$ for all $\mu < \hanf{\alpha + \LS (K)}$, then $K$ has the $\alpha$-order property.
\end{fact}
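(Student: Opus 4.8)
The plan is to run the standard Ehrenfeucht--Mostowski construction for AECs (Shelah's presentation theorem together with Morley's method) while tracking, through the Erd\H{o}s--Rado extraction, the Galois types over $\emptyset$ of pairs of generators. Write $\lambda := \LS (K)$ and $\chi := \lambda + |\alpha|$, so that $\hanf{\alpha + \LS (K)} = \beth_{(2^\chi)^+}$. By Shelah's presentation theorem \cite{sh88}, fix a finitary vocabulary $\tau_1 \supseteq \tau := \tau (K)$ with $|\tau_1| \le \lambda$, a universal first-order $\tau_1$-theory $T_1$, and a set $\Gamma_1$ of quantifier-free $\tau_1$-types with $|\Gamma_1| \le 2^\lambda$, such that $K$ is the class of $\tau$-reducts of models of $T_1$ omitting $\Gamma_1$, and such that the $\tau_1$-substructure generated by a subset of any such model $M_1$ is again such a model, with $\tau$-reduct $\lea M_1 \rest \tau$. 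Assume the hypothesis of the statement. For each $\mu < \hanf{\alpha + \LS (K)}$ fix $N^\mu \in K$ together with $\seq{\ba^\mu_i : i < \mu}$ witnessing the $\alpha$-order property of length $\mu$; replacing $N^\mu$ by the $\lea$-submodel whose $T_1$-expansion is generated by $\bigcup_{i < \mu} \ba^\mu_i$ (this still contains the $\ba^\mu_i$, and by monotonicity of Galois types, Proposition \ref{galois-types-basic}, they still witness the $\alpha$-order property in it), we may assume $N^\mu$, expanded to $N^\mu_1 \models T_1$, is generated by $\{\ba^\mu_i : i < \mu\}$.

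Now apply Erd\H{o}s--Rado and the usual template-extraction argument, colouring each finite increasing subtuple of $\seq{\ba^\mu_i : i < \mu}$ by its quantifier-free $\tau_1$-type and each pair $i < j$ also by $\gtp (\ba^\mu_i \ba^\mu_j / \emptyset; N^\mu)$. Since $\tau_1$ is finitary, there are at most $2^\chi$ quantifier-free $\tau_1$-types of an increasing $n$-tuple of $\alpha$-tuples, and $|\gS^{\alpha + \alpha} (\emptyset)| \le 2^\chi$; as the $N^\mu$ exist for $\mu$ cofinal in $\beth_{(2^\chi)^+}$, a single Ehrenfeucht--Mostowski template $\Phi$ (over $\tau_1$, with $\alpha$-tuple generators, proper for linear orders) must work for cofinally many $\mu$. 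Thus for every linear order $L$ we obtain a $\tau_1$-structure $\mathrm{EM} (L, \Phi)$ generated by an $L$-indexed sequence $\seq{\bb_\ell : \ell \in L}$ of $\alpha$-tuples, with $\mathrm{EM} (L, \Phi)$ a model of $T_1$ omitting $\Gamma_1$ (so $\mathrm{EM} (L, \Phi) \rest \tau \in K$), together with a fixed $p \in \gS^{\alpha + \alpha} (\emptyset)$ such that, in whichever $N^\mu$ generated $\Phi$, every increasing pair of generators realizes $p$ over $\emptyset$ while no decreasing pair does (the latter since, by the order property of $N^\mu$, the Galois type of a decreasing pair of generators differs from $p$, the common type of the increasing pairs).

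It remains to check that $\seq{\bb_\ell : \ell \in L}$ witnesses the $\alpha$-order property of length $|L|$ in $\mathrm{EM} (L, \Phi) \rest \tau$. Fix $\ell_0 <_L \ell_1$ and $\ell_0' <_L \ell_1'$. By properness, the $\tau_1$-substructure of $\mathrm{EM} (L, \Phi)$ generated by $\bb_{\ell_0} \bb_{\ell_1}$ is isomorphic, via a map carrying $\bb_{\ell_0}, \bb_{\ell_1}$ to $\ba^\mu_{i_0}, \ba^\mu_{i_1}$ for some $i_0 < i_1$, to the $\tau_1$-substructure of $N^\mu_1$ generated by $\ba^\mu_{i_0} \ba^\mu_{i_1}$; by the presentation theorem, both of these substructures, reduced to $\tau$, are $\lea$-submodels of $\mathrm{EM} (L, \Phi) \rest \tau$ and of $N^\mu$ respectively. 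Hence invariance and monotonicity of Galois types give $\gtp (\bb_{\ell_0} \bb_{\ell_1} / \emptyset; \mathrm{EM} (L, \Phi) \rest \tau) = \gtp (\ba^\mu_{i_0} \ba^\mu_{i_1} / \emptyset; N^\mu) = p$. Applying the same to the generators $\bb_{\ell_0'}, \bb_{\ell_1'}$ (mapped to $\ba^\mu_{j_0}, \ba^\mu_{j_1}$ with $j_0 < j_1$) but reading the pair in the opposite order, $\gtp (\bb_{\ell_1'} \bb_{\ell_0'} / \emptyset; \mathrm{EM} (L, \Phi) \rest \tau) = \gtp (\ba^\mu_{j_1} \ba^\mu_{j_0} / \emptyset; N^\mu) \neq p$. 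Therefore $\gtp (\bb_{\ell_0} \bb_{\ell_1} / \emptyset; \mathrm{EM} (L, \Phi) \rest \tau) \neq \gtp (\bb_{\ell_1'} \bb_{\ell_0'} / \emptyset; \mathrm{EM} (L, \Phi) \rest \tau)$, and since $|L|$ is arbitrary, $K$ has the $\alpha$-order property.

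The genuinely delicate point --- and the one where the bound $\beth_{(2^\chi)^+}$ is consumed --- is the classical Morley's-method step ensuring that the Ehrenfeucht--Mostowski models over arbitrary linear orders omit $\Gamma_1$ (equivalently, that their $\tau$-reducts stay in $K$); the extra Galois-type colouring only serves to make the pair type $p$ well-defined, at the cost of at most $|\gS^{\alpha + \alpha} (\emptyset)| \le 2^\chi$ further colours, which does not affect the Hanf-number computation $|\tau_1| + |\alpha| + |\Gamma_1| \le 2^\chi$. For this machinery see \cite{sh88} and, e.g., \cite{baldwinbook09}.
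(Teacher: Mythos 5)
Your argument is correct and coincides with the paper's: the paper proves this fact simply by citing Shelah's \cite[Claim 4.5.3]{sh394}, whose proof is precisely this presentation-theorem-plus-Morley's-method extraction of an Ehrenfeucht--Mostowski template, with the order property transferred to the EM model through generated substructures and invariance/monotonicity of Galois types. The only (harmless) imprecision is your phrase that a single template is ``generated by'' one $N^\mu$ and ``works for cofinally many $\mu$'' --- in the standard extraction each finite level of $\Phi$ is merely realized by an increasing tuple in \emph{some} $N^\mu$ --- but since your final verification only uses the two-generator level (whose realizing $N^\mu$ automatically yields the same pair type $p$, as $p$ is determined by the EM model itself), nothing in the argument is affected.
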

\begin{proof}
By the same proof as \cite[Claim 4.5.3]{sh394}.
\end{proof}
\begin{cor}\label{hanf-limit-op} \
  Assume $\beth_\kappa = \kappa > \LS (K)$. Then $\bigK$ has the syntactic order property of length $\kappa$ if and only if $K$ has the $(<\kappa)$-order property.
\end{cor}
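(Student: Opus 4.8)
The plan is to prove Corollary \ref{hanf-limit-op} by combining Proposition \ref{op-equiv} (which handles the ``local'' equivalence between the syntactic $\alpha$-order property and the Galois $\alpha$-order property for each fixed $\alpha < \kappa$) with Fact \ref{shelah-hanf} (the Hanf number phenomenon for the order property), using the hypothesis $\beth_\kappa = \kappa$ to control the relevant cardinal arithmetic. Recall the observation following Definition \ref{hanf-def}: $\kappa = \beth_\kappa$ exactly when $h(\mu) < \kappa$ for all $\mu < \kappa$. This is the arithmetic fact that will make everything fit below $\kappa$.

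\medskip

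First I would prove the easy direction. Suppose $\bigK$ has the syntactic order property of length $\kappa$. By definition this means there is $\alpha < \kappa$ such that $\bigK$ has the syntactic $\alpha$-order property of length $\kappa$, hence (a fortiori, since $\kappa$ is a cardinal) of length $\mu$ for every $\mu < \kappa$. By Proposition \ref{op-equiv}.(1), $K$ has the $\alpha$-order property of length $\mu$ for all $\mu < \kappa$. Now I want to upgrade this to the full $\alpha$-order property via Fact \ref{shelah-hanf}: it suffices to check $\hanf{\alpha + \LS(K)} \le \kappa$. But $\alpha < \kappa$ and $\LS(K) < \kappa$ (the latter by hypothesis), so $\mu_0 := \alpha + \LS(K) < \kappa$, and since $\kappa = \beth_\kappa$ we get $h(\mu_0) = \beth_{(2^{\mu_0})^+} < \kappa$. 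Hence $K$ has the $\alpha$-order property, so it has the $(<\kappa)$-order property.

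\medskip

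For the converse, suppose $K$ has the $(<\kappa)$-order property, i.e.\ $K$ has the $\alpha$-order property for some $\alpha < \kappa$ and every length $\mu$. Fix such an $\alpha$ and some $N \in K$ witnessing (say) the $\alpha$-order property of length $\mu$ for a sufficiently large $\mu$ to be chosen momentarily. I want to apply Proposition \ref{op-equiv}.(2) with $\lambda := \kappa$ to conclude $\bigN$ has the syntactic $\alpha$-order property of length $\kappa$, which gives the syntactic order property of length $\kappa$. For this I must produce $N$ with the Galois $\alpha$-order property of length $\mu$ where $\mu \ge (2^{\kappa + \chi})^+$ and $\chi := |\gS^{\alpha+\alpha}(\emptyset)|$ — since $K$ has the $\alpha$-order property of \emph{every} length, such an $N$ exists for whatever $\mu$ we name. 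So the converse goes through for free once we simply invoke the hypothesis that the order property holds for all lengths; the Beth-fixed-point hypothesis is not even needed here, only in the forward direction.

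\medskip

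The main (and really only) obstacle is the bookkeeping in the forward direction: making sure that the Hanf bound $\hanf{\alpha + \LS(K)}$ genuinely falls below $\kappa$, which is exactly where $\beth_\kappa = \kappa$ and $\LS(K) < \kappa$ are used, and being careful that ``length $\kappa$'' in the syntactic order property indeed entails ``length $\mu$ for all $\mu < \kappa$'' as required by Fact \ref{shelah-hanf}'s hypothesis. Everything else is a direct citation of Proposition \ref{op-equiv} and Fact \ref{shelah-hanf}. I would write the whole argument in four or five lines.
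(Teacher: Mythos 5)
Your proof is correct and follows essentially the same route as the paper: Proposition \ref{op-equiv}.(1) plus Fact \ref{shelah-hanf} for the direction from the syntactic order property of length $\kappa$ to the $(<\kappa)$-order property (with $\beth_\kappa = \kappa > \LS(K)$ used exactly to put $\hanf{\alpha + \LS(K)}$ below $\kappa$), and Proposition \ref{op-equiv}.(2) alone for the converse. Your observation that the Beth-fixed-point hypothesis is only needed in the first direction matches the paper's argument as well.
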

\begin{proof} 
  If $\bigK$ has the syntactic order property of length $\kappa$, then for some $\alpha < \kappa$, $\bigK$ has the syntactic $\alpha$-order property of length $\kappa$, and thus by Proposition \ref{op-equiv} the $\alpha$-order property of length $\kappa$. Since $\kappa = \beth_\kappa$, $\hanf{|\alpha| + \LS (K)} < \kappa$, so by Fact \ref{shelah-hanf}, $K$ has the $\alpha$-order property. 

  Conversely, if $K$ has the $(<\kappa)$-order property, Proposition \ref{op-equiv} implies that $\bigK$ has the syntactic order property, so in particular the syntactic order property of length $\kappa$.
\end{proof}

For completeness, we also discuss the following semantic variation of the syntactic order property of length $\kappa$ that appears in \cite[Definition 4.2]{bg-v9} (but is adapted from a previous definition of Shelah, see there for more background):

\begin{defin}\label{def-weak-op}
  For $\kappa > \LS (K)$, $K$ has the \emph{weak $\kappa$-order property} if there are $M \in K_{<\kappa}$, $N \gea M$, types $p \neq q \in \gS^{<\kappa} (M)$, and sequences $\seq{\ba_i : i < \kappa}$, $\seq{\bb_i : i < \kappa}$ from $N$ so that for all $i, j < \kappa$:

  \begin{enumerate}
  \item $i \le j$ implies $\gtp (\ba_i \bb_j / M; N) = p$.
  \item $i > j$ implies $\gtp (\ba_i \bb_j / M; N) = q$.
  \end{enumerate}
\end{defin}

\begin{lem}\label{weak-op-equiv}
  Let $\kappa > \LS (K)$.
  
  \begin{enumerate}
    \item If $K$ has the $(<\kappa)$-order property, then $K$ has the weak $\kappa$-order property.
    \item If $K$ has the weak $\kappa$-order property, then $\bigK$ has the syntactic order property of length $\kappa$.
  \end{enumerate}

  In particular, if $\kappa = \beth_\kappa$, then the weak $\kappa$-order property, the $(<\kappa)$-order property of length $\kappa$, and the $(<\kappa)$-order property are equivalent.
\end{lem}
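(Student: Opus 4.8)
The plan is to prove the two numbered implications directly and then obtain the ``in particular'' as a cycle, feeding in the results already available (Fact~\ref{shelah-hanf} and Corollary~\ref{hanf-limit-op}) for the passage from a bounded-length order property to the full one.

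For (1), I would fix $\alpha<\kappa$ for which $K$ has the \emph{full} $\alpha$-order property, so a witnessing sequence of any length is available. Let $\chi$ be a cardinal bounding the number of Galois types of length $\alpha+\alpha$ over any model of size $\le\LS(K)$ (such $\chi$ exists by the usual bound on Galois Stone spaces of an AEC, e.g.\ $\chi:=2^{\LS(K)+|\alpha|}$), and by Erdős--Rado fix $\mu$ with $\mu\to(\kappa)^2_\chi$. Take $\seq{\bc_i:i<\mu}$ inside some $N\in K$ witnessing the $\alpha$-order property of length $\mu$ and, using $\kappa>\LS(K)$, a model $M\lea N$ with $\|M\|=\LS(K)<\kappa$. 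Apply the partition relation to the coloring $\{i<j\}\mapsto(\gtp(\bc_i\bc_j/M;N),\gtp(\bc_j\bc_i/M;N))$; re-enumerating a monochromatic set of size $\kappa$ increasingly yields a subsequence $\seq{\bc'_k:k<\kappa}$ (still witnessing the $\alpha$-order property over $\emptyset$) and fixed $p_*,q_*\in\gS^{\alpha+\alpha}(M;N)\subseteq\gS^{<\kappa}(M)$ with $\gtp(\bc'_k\bc'_l/M;N)=p_*$ and $\gtp(\bc'_l\bc'_k/M;N)=q_*$ for all $k<l$. The order property over $\emptyset$ gives $\gtp(\bc'_k\bc'_l/\emptyset;N)\ne\gtp(\bc'_l\bc'_k/\emptyset;N)$ for $k<l$, so monotonicity of Galois types under restriction of the domain forces $p_*\ne q_*$. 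Finally I would interleave: using the standard even/odd splitting of $\kappa$, pick strictly increasing $\seq{a_i:i<\kappa}$, $\seq{b_j:j<\kappa}$ of ordinals $<\kappa$ with $a_i\ne b_j$ always and $a_i<b_j$ iff $i\le j$, and set $\ba_i:=\bc'_{a_i}$, $\bb_j:=\bc'_{b_j}$; then $\gtp(\ba_i\bb_j/M;N)$ is $p_*$ when $i\le j$ and $q_*$ when $i>j$, so $M,N,p_*,q_*$ and these sequences witness the weak $\kappa$-order property.

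For (2), given a witness $M\in K_{<\kappa}$, $N\gea M$, $p\ne q\in\gS^{<\kappa}(M)$, $\seq{\ba_i:i<\kappa}$, $\seq{\bb_i:i<\kappa}$, the key remark is that since $\dom(p)=\dom(q)$ has size $<\kappa$ and $\ell(p),\ell(q)<\kappa$, $K$ is \emph{trivially} $(<\kappa)$-tame and short for $\{p,q\}$ (take the whole domain, the whole index set). So Theorem~\ref{separation} applies and gives $p^s\ne q^s$: pick a quantifier-free $\Ll_{\kappa,\kappa}(\bigtau)$-formula $\phi_0(\bx,\bar a)$ over $M$ ($\bar a\in\fct{<\kappa}{|M|}$) with $\phi_0(\bx,\bar a)\in p^s$ and $\neg\phi_0(\bx,\bar a)\in q^s$. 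By Lemma~\ref{gtp-map}, $\qtp(\ba_i\bb_j/M;\bigN)$ equals $p^s$ when $i\le j$ and $q^s$ when $i>j$, so $\bigN\models\phi_0[\ba_i\bb_j,\bar a]$ iff $i\le j$. I would then absorb the parameters into the sequences: put $\be_i:=\ba_i\bb_i\bar a$ (a tuple of some length $<\kappa$) and take for $\psi(\bu,\bv)$ the quantifier-free $\Ll_{\kappa,\kappa}(\bigtau)$-formula which, evaluated on $(\be_i,\be_j)$, asserts $\neg\phi_0$ of the $\ba$-block of $\bv$, the $\bb$-block of $\bu$, and the parameter block; then $\bigN\models\psi[\be_i,\be_j]$ iff $\neg(j\le i)$ iff $i<j$, which is exactly the syntactic order property of length $\kappa$. (One can also bypass Theorem~\ref{separation} and observe directly that $\gtp(\ba_0\bb_0\bar a/\emptyset;N)\ne\gtp(\ba_1\bb_0\bar a/\emptyset;N)$ -- else they would restrict to $p=q$ -- and use the Morleyization predicate $R_r$ for $r:=\gtp(\ba_0\bb_0\bar a/\emptyset;N)$.)

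For the ``in particular'' part, assume in addition $\kappa=\beth_\kappa$ (we already have $\kappa>\LS(K)$). Then (1), (2), and Corollary~\ref{hanf-limit-op} (applicable since $\beth_\kappa=\kappa>\LS(K)$) close a cycle: the $(<\kappa)$-order property $\Rightarrow$ the weak $\kappa$-order property $\Rightarrow$ $\bigK$ has the syntactic order property of length $\kappa$ $\Rightarrow$ the $(<\kappa)$-order property; hence the first two are equivalent. The $(<\kappa)$-order property obviously implies its length-$\kappa$ restriction; conversely, if $K$ has the $\alpha$-order property of length $\kappa$ for some $\alpha<\kappa$, then $\hanf{|\alpha|+\LS(K)}<\kappa$ since $\kappa=\beth_\kappa$, so $K$ has the $\alpha$-order property of length $\mu$ for all $\mu<\hanf{|\alpha|+\LS(K)}$ and hence the full $\alpha$-order property by Fact~\ref{shelah-hanf}. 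The step I expect to be the main obstacle is (1): manufacturing two \emph{distinct} Galois types over the small model $M$ out of a single order-property sequence computed over $\emptyset$ is what forces the Erdős--Rado stabilization to be run on the \emph{pair} of $M$-Galois types together with the interleaving trick, and one must remember to recover $p_*\ne q_*$ from the $\emptyset$-order property via domain-monotonicity. In (2) the only subtlety is seeing that tameness is not needed because the domain already has size $<\kappa$; the rest is bookkeeping with variables and parameters.
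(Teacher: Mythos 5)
Your proposal is correct and follows essentially the same route as the paper: for (1) an Erd\H{o}s--Rado stabilization of the pair of Galois types over a small $M\lea N$ followed by the even/odd interleaving, and for (2) a quantifier-free formula separating $p$ from $q$ (the paper uses the predicate $R_p$ with the enumeration of $M$ implicitly as parameters, which is exactly your alternative via $R_r$), with the ``in particular'' closed off through Corollary \ref{hanf-limit-op} and Fact \ref{shelah-hanf}. Your explicit absorption of the parameters $\bar a$ into the tuples $\be_i$ is just a more careful rendering of the same step.
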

\begin{proof} \
  \begin{enumerate}
    \item Assume $K$ has the $(<\kappa)$-order property. To see the weak order property, let $\alpha < \kappa$ be such that $K$ has the $\alpha$-order property. Fix an $N \in K$ such that $N$ has a long-enough $\alpha$-order property. Pick any $M \in K_{<\kappa}$ with $M \lea N$. By using the Erdős-Rado theorem twice, we can assume we are given $\seq{\bc_i : i < \kappa}$ such that whenever $i < j < \kappa$, $\gtp (\bc_i\bc_j / M; N) = p$, and $\gtp (\bc_j \bc_i / M; N) = q$, for some $p \neq q \in \gS (M)$.

  For $l < \kappa$, let $j_l := 2l$, and $k_l := 2l + 1$. Then $j_l, k_l < \kappa$, and $l \le l'$ implies $j_l < k_{l'}$, whereas $l > l'$ implies $j_l > k_{l'}$. Thus the sequences defined by $\ba_l := \bc_{j_l}$, $\bb_l := \bc_{k_l}$ are as required.
\item Assume $K$ has the weak $\kappa$-order property and let $M, N, p, q, \seq{\ba_i : i < \kappa}$, $\seq{\bb_i : i < \kappa}$ witness it. For $i < \kappa$, Let $\bc_i := \ba_i \bb_i$ and $\phi (\bx_1 \bx_2; \by_1 \by_2) := R_p (\by_1, \bx_2)$. This witnesses the syntactic order property of length $\kappa$ in $\bigK$.
  \end{enumerate}

  The last sentence follows from Proposition \ref{op-equiv} and Corollary \ref{hanf-limit-op}.
\end{proof}

\subsection{Order property and stability}

We now want to relate stability in terms of the number of types (see Definition \ref{stab-def}) to the order property and use this to find many stability cardinals.

Note that stability in $K$ (in terms of Galois types, see Definition \ref{stab-def}) coincides with syntactic stability in $\bigK$ given enough tameness and shortness (see Theorem \ref{separation}). In general, they could be different, but by Proposition \ref{inv-lem}, stability always implies syntactic stability (and so syntactic unstability implies unstability). This contrasts with the situation with the order properties, where the syntactic and regular order property are equivalent without tameness (see Proposition \ref{op-equiv}).

The basic relationship between the order property and stability is given by:

\begin{fact}\label{stab-facts-op}
  If $K$ has the $\alpha$-order property and $\mu \ge |\alpha| + \LS (K)$, then $K$ is not $\alpha$-stable in $\mu$. If in addition $\alpha < \kappa$, then $\bigK$ is not even syntactically $\alpha$-stable in $\mu$.
\end{fact}
\begin{proof}
  \cite[Claim 4.8.2]{sh394} is the first sentence. The proof (see \cite[Fact 5.13]{bgkv-v2}) generalizes (using the syntactic order property) to get the second sentence.
\end{proof}

This shows that the order property implies unstability and we now work towards a syntactic converse. The key is \cite[Theorem V.A.1.19]{shelahaecbook2}, which shows that if a model does not have the (syntactic) order property of a certain length, then it is (syntactically) stable in certain cardinals. Here, syntactic refers to Shelah's very general context, where any subset $\Delta$ of formulas from any abstract logic is allowed. Shelah assumes that the vocabulary is finitary but the proof goes through just as well with an infinitary vocabulary (the proof only deals with formulas, which \emph{are} allowed to be infinitary). Thus specializing the result to the context of this paper (working with the logic $\Ll_{\kappa, \kappa} (\bigtau)$ and $\Delta = \qfl$), we obtain:

\begin{fact}\label{syn-stab-spectrum-technical}
  Let $\bigN \in \bigK$. Let $\alpha < \kappa$. Let $\chi \ge (|\bigtau| + 2)^{<\kappa}$ be a cardinal. If $\bigN$ does not have the syntactic order property of length $\chi^+$, then whenever $\lambda = \lambda^{\chi} + \beth_2 (\chi)$, $\bigN$ is (syntactically) $(<\kappa)$-stable in $\lambda$.
\end{fact}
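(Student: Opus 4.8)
The plan is to obtain the statement as a direct specialization of Shelah's general theorem on $\Delta$-stability in the absence of the $\Delta$-order property, \cite[Theorem V.A.1.19]{shelahaecbook2}, which is proved for an arbitrary set $\Delta$ of formulas in an arbitrary abstract logic. I would set the abstract logic to be $\Ll_{\kappa,\kappa}(\bigtau)$ and take $\Delta := \qfl$. Under this dictionary, the cardinal parameter appearing in Shelah's hypothesis --- an upper bound for the number of relevant formulas together with the size of the vocabulary --- becomes $(|\bigtau| + 2)^{<\kappa}$: the set of quantifier-free $\Ll_{\kappa,\kappa}(\bigtau)$-formulas with fewer than $\kappa$ free variables has size at most $(|\bigtau| + 2)^{<\kappa}$, which is precisely why the statement demands $\chi \geq (|\bigtau| + 2)^{<\kappa}$. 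With this translation in place, the conclusion of Shelah's theorem (``$\bigN$ is $\Delta$-stable in every $\lambda$ with $\lambda = \lambda^\chi \geq \beth_2(\chi)$'') unwinds to the assertion that $|\Ss^{<\kappa}_{\qfl}(A; \bigN)| \leq \lambda$ for every $A \subseteq |\bigN|$ with $|A| \leq \lambda$, i.e.\ that $\bigN$ is syntactically $(<\kappa)$-stable in $\lambda$.

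The one point that genuinely needs checking --- and the part I would read most carefully --- is that Shelah's argument never uses the finitariness of the vocabulary except for cardinality bookkeeping. Finitariness enters there only when one counts formulas and counts tuples of parameters; in the infinitary setting the first count is supplied by the bound $(|\bigtau| + 2)^{<\kappa} \leq \chi$ above, and for the second one notes that the number of $(<\kappa)$-tuples from a set of size $\leq \lambda$ is $\lambda^{<\kappa}$, which is $\leq \lambda^\chi = \lambda$ since $\kappa \leq 2^{<\kappa} \leq \chi$. Every other step manipulates formulas abstractly --- conjunctions, negations, substitution of parameters --- and infinitary $\Ll_{\kappa,\kappa}$-formulas are permitted in all of these; in particular nothing breaks a formula into finitely many atomic constituents in a way that would require finite arity. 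So the adaptation is routine, but confirming it is the main (modest) obstacle.

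For orientation, here is the shape of the underlying argument, in contrapositive form. Assuming $\bigN$ is not syntactically $(<\kappa)$-stable in some $\lambda$ with $\lambda = \lambda^\chi \geq \beth_2(\chi)$, one fixes $A \subseteq |\bigN|$ with $|A| \leq \lambda$ and, after a pigeonhole on the length, a sequence $\seq{\bb_i : i < \lambda^+}$ of $\beta$-tuples from $\bigN$ (some fixed $\beta < \kappa$) realizing pairwise distinct types in $\Ss^{\beta}_{\qfl}(A; \bigN)$; for each $i \neq j$ one chooses a quantifier-free $\Ll_{\kappa,\kappa}(\bigtau)$-formula and a parameter $\ba_{ij} \in \fct{<\kappa}{A}$ separating $\bb_i$ from $\bb_j$. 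A tree/counting argument together with an instance of the Erdős-Rado theorem --- this is where the hypothesis $\lambda \geq \beth_2(\chi)$ is used --- then extracts from this data a subsequence of length $\chi^+$ and a single quantifier-free formula witnessing the syntactic order property of length $\chi^+$, contradicting the hypothesis. The delicate internal point is that the separating parameter $\ba_{ij}$ varies with the pair; in Shelah's argument this is absorbed by passing to a subsequence homogeneous for the quantifier-free type over $\emptyset$ of the relevant parameter-augmented pairs. Finally, since everything here is quantifier-free, this argument uses none of the invariance or monotonicity clauses of a functorial expansion (those were needed in Proposition \ref{inv-lem}, not here).
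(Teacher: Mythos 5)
Your proposal matches the paper's own treatment: the paper likewise obtains this Fact by specializing Shelah's general $\Delta$-stability theorem \cite[Theorem V.A.1.19]{shelahaecbook2} to the logic $\Ll_{\kappa,\kappa}(\bigtau)$ with $\Delta = \qfl$, noting exactly as you do that the finitary-vocabulary assumption there is used only for counting and that the proof goes through verbatim for infinitary formulas. Your additional sketch of the contrapositive Erdős--Rado argument is a faithful account of what lies inside Shelah's proof, so there is nothing to correct.
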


The next corollary does not need any amalgamation or tameness. Intuitively, this is because every property involved ends up being checked inside a single model (for example, $\bigK$ syntactically stable in some cardinal means that all of its \emph{models} are syntactically stable in the cardinal). 

\begin{cor}\label{syn-stab-spectrum}
  The following are equivalent:

  \begin{enumerate}
  \item\label{sss-some-card} For every $\kappa_0 < \kappa$ and every $\alpha < \kappa$, $\bigK$ is syntactically $\alpha$-stable in \emph{some} cardinal greater than or equal to $\LS (K) + \kappa_0$.
  \item\label{sss-op} $K$ does not have the $(<\kappa)$-order property.
  \item\label{sss-spec} There exist\footnote{The cardinal $\mu$ is closely related to the local character cardinal $\bkappa$ for nonsplitting. See for example \cite[Theorem 4.13]{tamenessone}.} cardinals $\mu$ and $\lambda_0$ with $\mu \le \lambda_0 < \hanfs{\kappa + \LS (K)^+}$ (recall Definition \ref{hanf-def}) such that $\bigK$ is syntactically $(<\kappa)$-stable in any $\lambda \ge \lambda_0$ with $\lambda^{<\mu} = \lambda$. In particular, $\bigK$ is syntactically $(<\kappa)$-stable.
  \end{enumerate}
\end{cor}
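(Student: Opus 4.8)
The plan is to prove the cycle $(\ref{sss-spec}) \Rightarrow (\ref{sss-some-card}) \Rightarrow (\ref{sss-op}) \Rightarrow (\ref{sss-spec})$, with the last implication doing essentially all the work.

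For $(\ref{sss-spec}) \Rightarrow (\ref{sss-some-card})$ I would observe that there are arbitrarily large cardinals $\lambda$ with $\lambda^{<\mu} = \lambda$ (for instance $\lambda = \beth_\gamma$ for a limit ordinal $\gamma$ with $\cf (\gamma) \ge \mu$): given $\kappa_0 < \kappa$ and $\alpha < \kappa$, choose such a $\lambda$ above $\lambda_0 + \LS (K) + \kappa_0$, so that $\bigK$ is syntactically $(<\kappa)$-stable in $\lambda$, hence syntactically $\alpha$-stable in $\lambda \ge \LS (K) + \kappa_0$. For $(\ref{sss-some-card}) \Rightarrow (\ref{sss-op})$ I would argue by contraposition: if $K$ has the $(<\kappa)$-order property, fix $\alpha^\ast < \kappa$ witnessing it; then Fact \ref{stab-facts-op} says $\bigK$ is not syntactically $\alpha^\ast$-stable in any cardinal $\ge |\alpha^\ast| + \LS (K)$, so (taking $\kappa_0 := |\alpha^\ast|$) condition $(\ref{sss-some-card})$ fails for the pair $(\kappa_0, \alpha^\ast)$.

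The core is $(\ref{sss-op}) \Rightarrow (\ref{sss-spec})$. Assuming $K$ lacks the $(<\kappa)$-order property, it lacks the $\alpha$-order property for each $\alpha < \kappa$, so by Fact \ref{shelah-hanf} there is for each such $\alpha$ a cardinal $\mu_\alpha < \hanf{|\alpha| + \LS (K)}$ for which $K$ lacks the $\alpha$-order property of length $\mu_\alpha$, hence of every length $\ge \mu_\alpha$ (restrict the witnessing sequence); by Proposition \ref{op-equiv}.(1), no $\bigN \in \bigK$ then has the syntactic $\alpha$-order property of any length $\ge \mu_\alpha$. I would then set
\[
  \chi := \Big( \sup_{\alpha < \kappa} \mu_\alpha \Big) + (|\bigtau| + 2)^{<\kappa}, \qquad \mu := \chi^+, \qquad \lambda_0 := \beth_2 (\chi) .
\]
Since $\chi^+ > \mu_\alpha$ for every $\alpha < \kappa$, no $\bigN \in \bigK$ has the syntactic order property of length $\chi^+$, and $\chi \ge (|\bigtau| + 2)^{<\kappa}$; so Fact \ref{syn-stab-spectrum-technical} applies to every $\bigN \in \bigK$ and gives that $\bigN$, hence $\bigK$, is syntactically $(<\kappa)$-stable in every $\lambda$ with $\lambda = \lambda^\chi + \beth_2 (\chi)$. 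Because $\mu = \chi^+$ makes $\lambda^{<\mu} = \lambda^\chi$, and $\mu \le 2^{2^\chi} = \lambda_0$, this is precisely the statement that $\bigK$ is syntactically $(<\kappa)$-stable in every $\lambda \ge \lambda_0$ with $\lambda^{<\mu} = \lambda$; and since there are unboundedly many such $\lambda$, $\bigK$ is syntactically $(<\kappa)$-stable.

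The one step I expect to require care is verifying the size constraint $\mu \le \lambda_0 < \hanfs{\kappa + \LS (K)^+}$. Writing $\theta := \kappa + \LS (K)^+$, a short case split on whether $\LS (K) < \kappa$ shows $|\alpha| + \LS (K) \le \theta^-$ for all $\alpha < \kappa$, so $\mu_\alpha < \hanf{\theta^-} = \hanfs{\theta}$ for each $\alpha$; as $\hanfs{\theta} = \beth_{(2^{\theta^-})^+}$ has cofinality $(2^{\theta^-})^+ > \kappa$, it follows that $\sup_{\alpha < \kappa}\mu_\alpha < \hanfs{\theta}$, and by Remark \ref{bigk-size} together with $\kappa \le \theta$ (so $(2^{<\theta})^{<\kappa} \le 2^\theta$) one gets $(|\bigtau| + 2)^{<\kappa} \le 2^\theta < \hanfs{\theta}$ as well; hence $\chi < \hanfs{\theta}$. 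What makes the bound go through is that $\hanfs{\theta} = \beth_{(2^{\theta^-})^+}$ is a \emph{strong limit} cardinal, its index $(2^{\theta^-})^+$ being a limit ordinal, so $\chi < \hanfs{\theta}$ already forces $\lambda_0 = \beth_2 (\chi) < \hanfs{\theta}$; and $\mu = \chi^+ \le 2^{2^\chi} = \lambda_0$. Everything else is a direct assembly of Facts \ref{stab-facts-op}, \ref{shelah-hanf}, \ref{syn-stab-spectrum-technical} and Proposition \ref{op-equiv}.
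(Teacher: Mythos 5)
Your proposal is correct and follows essentially the same route as the paper's proof: the same cycle of implications built on Facts \ref{stab-facts-op}, \ref{shelah-hanf}, \ref{syn-stab-spectrum-technical} and Proposition \ref{op-equiv}, with only cosmetic differences (you fold $(|\bigtau|+2)^{<\kappa}$ directly into $\chi$ and use a uniform cofinality argument where the paper splits into cases on whether $\kappa$ is a successor). The one step to make explicit is in (\ref{sss-some-card}) $\Rightarrow$ (\ref{sss-op}): the $(<\kappa)$-order property only provides, for each length, \emph{some} $\alpha < \kappa$ witnessing that length, so to ``fix $\alpha^\ast$'' uniformly you need either a short pigeonhole argument over lengths (some $\alpha$ works cofinally, hence for all lengths) or the paper's detour of taking length $\hanf{\kappa + \LS (K)}$ and applying Fact \ref{shelah-hanf}.
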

\begin{proof}
  (\ref{sss-spec}) says in particular that $\bigK$ is syntactically $(<\kappa)$-stable in a proper class of cardinals, so it clearly implies (\ref{sss-some-card}). (\ref{sss-some-card}) implies (\ref{sss-op}): We prove the contrapositive. Assume that $K$ has the $(<\kappa)$-order property. In particular, $K$ has the $(<\kappa)$-order property of length $\hanf{\kappa + \LS (\K)}$. By definition, this means that for some $\alpha < \kappa$, $K$ has the $\alpha$-order property of length $\hanf{\kappa + \LS (\K)}$. By Fact \ref{shelah-hanf}, $K$ has the $\alpha$-order property. By Fact \ref{stab-facts-op}, $\bigK$ is not syntactically $\alpha$-stable in any cardinal above $\LS (K) + |\alpha|$ (that is, for each $\lambda \ge \LS (\K) + |\alpha|$, there is $\bigN \in \bigK$ such that $\bigN$ is not syntactically $\alpha$-stable in $\lambda$). Thus taking $\kappa_0 := |\alpha|$, we get that (\ref{sss-some-card}) fails.

  Finally (\ref{sss-op}) implies (\ref{sss-spec}). Assume $K$ does not have the $(<\kappa)$-order property. By the contrapositive of Fact \ref{shelah-hanf}, for each $\alpha < \kappa$, there exists $\mu_\alpha < \hanf{|\alpha| + \LS (K)} \le \hanfs{\kappa + \LS (K)^+}$ such that $K$ does not have the  $\alpha$-order property of length $\mu_\alpha$. Since $2^{<(\kappa + \LS (K)^+)} < \hanfs{\kappa + \LS (K)^+}$, we can without loss of generality assume that $2^{<(\kappa + \LS (K)^+)} \le \mu_\alpha$ for all $\alpha < \kappa$. Let $\chi := \sup_{\alpha < \kappa} \mu_\alpha$. Then $K$ does not have the $(<\kappa)$-order property of length $\chi$. Now if $\kappa$ is a successor (say $\kappa = \kappa_0^+$), then $\chi = \mu_{\kappa_0} < \hanf{\kappa_0} \le \hanfs{\kappa + \LS (\K)^+}$. Otherwise $\hanfs{\kappa + \LS (\K)^+} = \hanf{\kappa + \LS (\K)}$ and $\cf{\hanf{\kappa + \LS (K)}} = (2^{\kappa + \LS (K)})^+ > \kappa$, so $\chi < \hanfs{\kappa + \LS (K)^+}$. Let $\mu := \chi^+$ and $\lambda_0 := \beth_2 (\chi)$. It is easy to check that $\mu \le \lambda_0 < \hanfs{\kappa + \LS (K)^+}$. Finally, note that by Remark \ref{bigk-size}, $|\bigtau| \le 2^{<(\kappa + \LS (K)^+)}$, so $\chi \ge (|\bigtau| + 2)^{<\kappa}$. Now apply Fact \ref{syn-stab-spectrum-technical} to each $\bigN \in \bigK$ (note that by definition of $\lambda_0$, if $\lambda = \lambda^{\chi} \ge \lambda_0$, then $\lambda = \lambda^{\chi} + \beth_2 (\chi)$).
\end{proof}
\begin{remark}
  Shelah \cite[Theorem 3.3]{sh932} claims (without proof) a version of (\ref{sss-some-card}) implies (\ref{sss-spec}).
\end{remark}

Assuming $(<\kappa)$-tameness for types of length less than $\kappa$, we can of course convert the above result to a statement about Galois types. To replace ``$(<\kappa)$-stable'' by just ``stable'' (recall that this means stable for types of length one) and also get away with only tameness for types of length one, we will use amalgamation together with Fact \ref{stab-facts}.

\begin{thm}\label{stab-spectrum}
  Assume $K$ has amalgamation and is $(<\kappa)$-tame. The following are equivalent:

  \begin{enumerate}
  \item\label{ss-some-card} $K$ is stable in \emph{some} cardinal greater than or equal to $\LS (K) + \kappa^-$ (recall Definition \ref{kappa-r-def}).
  \item\label{ss-op} $K$ does not have the order property.
  \item\label{ss-op2} $K$ does not have the $(<\kappa)$-order property.
  \item\label{ss-spec} There exist cardinals $\mu$ and $\lambda_0$ with $\mu \le \lambda_0 < \hanfs{\kappa + \LS (K)^+}$ (recall Definition \ref{hanf-def}) such that $K$ is stable in any $\lambda \ge \lambda_0$ with $\lambda^{<\mu} = \lambda$.
  \end{enumerate}

  In particular, $K$ is stable if and only if $K$ does not have the order property.
\end{thm}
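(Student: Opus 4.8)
The plan is to prove Theorem~\ref{stab-spectrum} by reducing everything to the syntactic statement in Corollary~\ref{syn-stab-spectrum} and then using the semantic-syntactic correspondence (Theorem~\ref{separation}) together with the structural facts about stability in tame AECs with amalgamation (Fact~\ref{stab-facts}). The implications among (\ref{ss-op}), (\ref{ss-op2}), and the syntactic order property are handled by Proposition~\ref{op-equiv} and Corollary~\ref{hanf-limit-op}; the equivalences involving stability require a bit more because stability in the theorem means stability for types of length one, whereas Corollary~\ref{syn-stab-spectrum} naturally gives $(<\kappa)$-stability.

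The detailed route I would take is the cycle (\ref{ss-spec}) $\Rightarrow$ (\ref{ss-some-card}) $\Rightarrow$ (\ref{ss-op}) $\Rightarrow$ (\ref{ss-op2}) $\Rightarrow$ (\ref{ss-spec}). The first implication is immediate: (\ref{ss-spec}) gives stability in a proper class of cardinals $\lambda$ with $\lambda^{<\mu}=\lambda$, and any such $\lambda$ is $\ge \LS(K)+\kappa^-$ once $\lambda_0$ is, so (\ref{ss-some-card}) holds. For (\ref{ss-some-card}) $\Rightarrow$ (\ref{ss-op}) I argue the contrapositive: if $K$ has the order property (of some length $\alpha$), then by Fact~\ref{stab-facts-op} $K$ is not $\alpha$-stable in any $\mu \ge |\alpha| + \LS(K)$; but $(<\kappa)$-tameness plus amalgamation and Fact~\ref{stab-facts}(\ref{stab-length-equiv}) would let a single stability cardinal propagate to $\alpha$-stability in unboundedly many cardinals, contradicting this — so no stability cardinal can exist. (Here I need to be slightly careful: Fact~\ref{stab-facts}(\ref{stab-length-equiv}) needs $\mu$-tameness with $\mu\ge\LS(K)$, which is what ``$(<\kappa)$-tame'' gives only when $\kappa > \LS(K)$, but if $\kappa \le \LS(K)$ then $(<\kappa)$-tameness already implies $\LS(K)$-tameness, so this is fine.) The implication (\ref{ss-op}) $\Rightarrow$ (\ref{ss-op2}) is trivial since the $(<\kappa)$-order property is a special case of the order property. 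The substantive implication is (\ref{ss-op2}) $\Rightarrow$ (\ref{ss-spec}): from ``no $(<\kappa)$-order property'' Corollary~\ref{syn-stab-spectrum} gives cardinals $\mu \le \lambda_0 < \hanfs{\kappa+\LS(K)^+}$ such that $\bigK$ is syntactically $(<\kappa)$-stable in every $\lambda \ge \lambda_0$ with $\lambda^{<\mu}=\lambda$; restricting attention to types of length one and using that quantifier-free syntactic stability is implied by Galois stability's contrapositive is the wrong direction, so instead I use tameness: by Corollary~\ref{types-formula}(2), $(<\kappa)$-tameness means the map $p\mapsto p^s$ is a bijection $\gS(M)\to \Ss_{\qfl}(M;\bigM)$ for each $M$, hence $|\gS^1(A;N)| = |\Ss_{\qfl}^1(A;\bigN)|$ for the relevant $A$, so $\bigK$ syntactically stable (for length-one types) in $\lambda$ yields $K$ stable in $\lambda$. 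One must check the bookkeeping for types over arbitrary sets $A$ versus models, but since we work at $\lambda\ge\LS(K)$ and types over sets extend to types over models inside the same structure (the remark before Definition~\ref{stab-def}), this is routine.

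The one genuinely fiddly point, and the step I expect to be the main obstacle, is the length-one versus $(<\kappa)$ discrepancy in (\ref{ss-op2}) $\Rightarrow$ (\ref{ss-spec}): Corollary~\ref{syn-stab-spectrum} is stated for $(<\kappa)$-stability, and I only assume $(<\kappa)$-tameness for types of length one, not for long types. So I cannot directly transfer $(<\kappa)$-syntactic stability of $\bigK$ to $(<\kappa)$-Galois stability of $K$. The fix is to transfer only length-one stability syntactically (which tameness for length-one types does allow), obtaining that $K$ is stable in every $\lambda \ge \lambda_0$ with $\lambda^{<\mu}=\lambda$ — this is exactly the conclusion of (\ref{ss-spec}), which only asks for stability (length one), not $(<\kappa)$-stability. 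So in fact no long-type tameness is needed and the statement of (\ref{ss-spec}) was chosen to sidestep precisely this issue. I would then note, as the theorem's final sentence, that taking $\mu = \aleph_0$ or observing that (\ref{ss-spec}) gives stability in unboundedly many cardinals shows $K$ stable $\iff$ $K$ has no order property, closing the argument.

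\medskip

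For completeness I record the skeleton as it would appear:

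\begin{proof}[Proof sketch]
We show (\ref{ss-spec}) $\Rightarrow$ (\ref{ss-some-card}) $\Rightarrow$ (\ref{ss-op}) $\Rightarrow$ (\ref{ss-op2}) $\Rightarrow$ (\ref{ss-spec}).

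(\ref{ss-spec}) $\Rightarrow$ (\ref{ss-some-card}): There are unboundedly many $\lambda$ with $\lambda^{<\mu} = \lambda$ and $\lambda \ge \lambda_0 \ge \LS(K) + \kappa^-$; $K$ is stable in each, giving (\ref{ss-some-card}).

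(\ref{ss-some-card}) $\Rightarrow$ (\ref{ss-op}): Contrapositive. If $K$ has the $\alpha$-order property then, since $K$ is $(<\kappa)$-tame (hence $\mu$-tame for some $\mu \ge \LS(K)$) and has amalgamation, Fact~\ref{stab-facts}(\ref{stab-length-equiv}) would make any single stability cardinal yield $\alpha$-stability in unboundedly many cardinals, contradicting Fact~\ref{stab-facts-op}. So $K$ is not stable in any $\lambda \ge \LS(K) + \kappa^-$.

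(\ref{ss-op}) $\Rightarrow$ (\ref{ss-op2}): Immediate.

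(\ref{ss-op2}) $\Rightarrow$ (\ref{ss-spec}): By Corollary~\ref{syn-stab-spectrum}, there are $\mu \le \lambda_0 < \hanfs{\kappa + \LS(K)^+}$ with $\bigK$ syntactically stable (for length-one types) in every $\lambda \ge \lambda_0$ such that $\lambda^{<\mu} = \lambda$. Fix such a $\lambda$ and $N \in K$, $A \subseteq |N|$ with $|A| \le \lambda$; by the remark before Definition~\ref{stab-def} we may assume $A = |M|$ for some $M \lea N$ with $\|M\| \le \lambda$. By Corollary~\ref{types-formula}(2), $(<\kappa)$-tameness gives $|\gS^1(M; N)| \le |\Ss_{\qfl}^1(M; \bigN)| \le \lambda$. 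Hence $K$ is stable in $\lambda$, proving (\ref{ss-spec}).

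The final sentence follows since (\ref{ss-spec}) provides stability in unboundedly many cardinals.
\end{proof}
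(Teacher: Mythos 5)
Your proposal is correct and follows essentially the same route as the paper: the same cycle (\ref{ss-spec})$\Rightarrow$(\ref{ss-some-card})$\Rightarrow$(\ref{ss-op})$\Rightarrow$(\ref{ss-op2})$\Rightarrow$(\ref{ss-spec}), with Fact \ref{stab-facts-op} plus Fact \ref{stab-facts}.(\ref{stab-length-equiv}) for the contrapositive step and Corollary \ref{syn-stab-spectrum} combined with Corollary \ref{types-formula} for the last implication. The extra bookkeeping you supply (length-one types suffice, sets versus models via the remark before Definition \ref{stab-def}) is exactly the detail the paper leaves implicit.
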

\begin{proof} 
  Clearly, (\ref{ss-spec}) implies (\ref{ss-some-card}) and (\ref{ss-op}) implies (\ref{ss-op2}). (\ref{ss-some-card}) implies (\ref{ss-op}): If $K$ has the $\alpha$-order property, then by Fact \ref{stab-facts-op} it cannot be $\alpha$-stable in any cardinal above $\LS (K) + |\alpha|$. By Fact \ref{stab-facts}.(\ref{stab-length-equiv}), $K$ is not stable in any cardinal greater than or equal to $\kappa^- + \LS (K)$, so (\ref{ss-some-card}) fails. Finally, (\ref{ss-op2}) implies (\ref{ss-spec}) by combining Corollary \ref{syn-stab-spectrum} and Corollary \ref{types-formula}.
\end{proof}
\begin{proof}[Proof of Theorem \ref{stab-spectrum-abstract}]
  Set $\kappa := \LS (\K)^+$ in Theorem \ref{stab-spectrum}. Note that in that case $\kappa^- = \LS (\K)$ (Definition \ref{kappa-r-def}) and $\hanfs{\kappa + \LS (\K)^+} = \hanfs{\LS (\K)^+} = \hanf{\LS (\K)}$ by Definition \ref{hanf-def}.
\end{proof}

\section{Coheir}\label{sec-coheir}

We look at the natural generalization of coheir (introduced in \cite{lascar-poizat} for first-order logic) to the context of this paper. A definition of coheir for classes of models of an $\Ll_{\kappa, \omega}$ theory was first introduced in \cite{makkaishelah} and later adapted to general AECs in \cite{bg-v9}. We give a slightly more conceptual definition here and show that coheir has several of the basic properties of forking in a stable first-order theory. This improves on \cite{bg-v9} which assumed that coheir had the extension property.

\begin{hypothesis}\label{coheir-hyp} \
  \begin{enumerate}
    \item $K^0$ is an AEC with amalgamation.
    \item $\kappa > \LS (\K^0)$ is a fixed cardinal.
    \item $K := \Ksatpp{\left(K^0\right)}{\kappa}$ is the class of $\kappa$-saturated models of $K^0$.
    \item $\bigK$ is the $(<\kappa)$-Galois Morleyization of $K$. Set $\bigtau := \tau (\bigK)$.
  \end{enumerate}  
\end{hypothesis}

The reader can see $\bigK$ as the class in which coheir is computed syntactically, while $K$ is the class in which it is used semantically. 

For the sake of generality, we do not assume stability or tameness yet. We will do so in parts (\ref{coheir-2}) and (\ref{coheir-3}) of Theorem \ref{coheir-syn}, the main theorem of this section. After the proof of Theorem \ref{coheir-syn}, we give a proof of Theorem \ref{coheir-syn-ab} in the abstract.

Note that by Remark \ref{sat-rmk}, $K$ is a $\kappap$-AEC (see Definition \ref{mu-aec-def}). Moreover by saturation the ordering has some elementarity. More precisely, let $\Sigma_1 (\Ll_{\kappa, \kappa} (\bigtau))$ denote the set of $\Ll_{\kappa, \kappa} (\bigtau)$-formulas of the form $\exists \bx \psi (\bx; \by)$, where $\psi$ is quantifier-free. We then have:

\begin{prop}\label{elem-prop} 
  If $M, N \in K$ and $M \lea N$, then $\bigM \lee_{\Sigma_1 (\Ll_{\kappa, \kappa} (\bigtau))} \bigN$.
\end{prop}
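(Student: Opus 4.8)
The plan is to reduce the statement to the following assertion: if $M \lea N$ are both $\kappa$-saturated models of $K^0$, and $\exists \bx\, \psi(\bx; \bar{b})$ is a $\Sigma_1(\Ll_{\kappa,\kappa}(\bigtau))$-formula with parameters $\bar{b} \in \fct{<\kappa}{|M|}$ (so $\psi$ quantifier-free and $\ell(\bx) < \kappa$), then $\bigN \models \exists \bx\, \psi[\bx; \bar{b}]$ implies $\bigM \models \exists \bx\, \psi[\bx; \bar{b}]$. (The converse direction, $\bigM \models \exists \bx\,\psi \Rightarrow \bigN \models \exists\bx\,\psi$, is immediate from $\bigM \subseteq \bigN$, which holds by monotonicity of the functorial expansion since $M \lea N$.) First I would fix a witness: pick $\bar{c} \in \fct{\ell(\bx)}{|N|}$ with $\bigN \models \psi[\bar{c}; \bar{b}]$. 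Since $\ell(\bx) < \kappa$ and $\kappa > \LS(K^0)$, and $M$ is $\kappa$-saturated as a model of $K^0$, the idea is to find a copy of $\bar{c}$ over $\bar{b}$ inside $M$.

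The key step is to realize the Galois type $\gtp(\bar{c} / \bar{b}; N)$ inside $M$. Here I would use that $M$ is $\kappa$-saturated: $\bar{b}$ enumerates a subset of $|M|$ of size $<\kappa$, and $\gtp(\bar{c}/\bar{b}; N) \in \gS^{<\kappa}(\bar{b}; N)$ with $N \gea M$, so by the definition of $\kappa$-saturation there is $\bar{c}' \in \fct{\ell(\bx)}{|M|}$ with $\gtp(\bar{c}'/\bar{b}; M) = \gtp(\bar{c}/\bar{b}; N)$ — strictly, saturation is phrased for models of $K^0$, but $K = \Ksatpp{(K^0)}{\kappa}$ consists of $\kappa$-saturated members of $K^0_{\ge\kappa}$ and $M \lea N$ in $K$ means $M \lea N$ in $K^0$, so this applies. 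By Proposition \ref{galois-types-basic}.(\ref{basic-2}) (monotonicity of Galois types), $\gtp(\bar{c}'/\bar{b}; N) = \gtp(\bar{c}/\bar{b}; N)$ as well, so $(\bar{c}'\bar{b}, \emptyset, N) E (\bar{c}\bar{b}, \emptyset, N)$ (concatenating), hence by Lemma \ref{gtp-map} the quantifier-free $\Ll_{\kappa,\kappa}(\bigtau)$-type of $\bar{c}'\bar{b}$ in $\bigN$ equals that of $\bar{c}\bar{b}$ in $\bigN$. In particular $\bigN \models \psi[\bar{c}'; \bar{b}]$.

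Now $\psi(\bx; \by)$ is quantifier-free, $\bar{c}' \in \fct{<\infty}{|M|}$, $\bar{b} \in \fct{<\infty}{|M|}$, and $\bigM \subseteq \bigN$ (monotonicity of the functorial expansion applied to $M \lea N$). Since quantifier-free formulas are absolute between a substructure and its superstructure — more precisely, one can invoke Proposition \ref{inv-lem} with $f$ the inclusion $M \hookrightarrow N$, or simply the elementary fact that satisfaction of quantifier-free formulas is preserved under substructures in both directions — we get $\bigM \models \psi[\bar{c}'; \bar{b}]$. Therefore $\bigM \models \exists\bx\,\psi[\bx; \bar{b}]$, as desired. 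Combining both directions gives $\bigM \lee_{\Sigma_1(\Ll_{\kappa,\kappa}(\bigtau))} \bigN$.

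The only subtle point — and the step I would write most carefully — is the bookkeeping around which class the saturation is computed in: we need $M$ to be $\kappa$-saturated \emph{in $N$} as models of $K^0$, which is exactly what membership of $M, N$ in $K$ and $M \lea N$ provide, and we need the realized sequence $\bar{c}'$ to have length $<\kappa$ so that $\gtp(\bar{c}/\bar{b};N) \in \gS^{<\kappa}(\bar{b}; N)$; this is guaranteed because $\psi$ is an $\Ll_{\kappa,\kappa}$-formula, so $\ell(\bx) < \kappa$, and $\bar{b}$ likewise enumerates $<\kappa$ elements since it is a tuple of variables of an $\Ll_{\kappa,\kappa}$-formula. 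Everything else is routine absoluteness of quantifier-free formulas together with the already-established fact (Proposition \ref{expansion-monot}) that Galois types agree in $K$ and $\bigK$.
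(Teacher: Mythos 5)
Your proposal is correct and follows essentially the same route as the paper: use $\kappa$-saturation of $M$ to realize the Galois type of the witness over the ($<\kappa$-sized) parameter tuple inside $M$, then transfer satisfaction of the quantifier-free matrix via Lemma \ref{gtp-map} (the paper folds your last two steps into one appeal to that lemma, and leaves the trivial upward direction implicit). No gaps.
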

\begin{proof}
Assume that $\bigN \models \exists \bx \psi (\bx; \ba)$, where $\ba \in \fct{<\kappa}{|M|}$ and $\psi$ is a quantifier-free $\Ll_{\kappa, \kappa} (\bigtau)$-formula. Let $A$ be the range of $\ba$. Let $\bb \in \fct{<\kappa}{|N|}$ be such that $\bigN \models \psi[\bb, \ba]$. Since $M$ is $\kappa$-saturated, there exists $\bb' \in \fct{<\kappa}{|M|}$ such that $\gtp (\bb' / A; M) = \gtp (\bb / A; N)$. Now it is easy to check using Proposition \ref{gtp-map} that $\bigM \models \psi[\bb'; \ba]$.
\end{proof}

Also note that if $\kappa$ is suitably chosen and $\K^0$ is stable, then we have a strong failure of the order property in $\bigK$:

\begin{prop}\label{syntactic-op-stable}
  If $\kappa = \beth_\kappa$ and $\K^0$ is stable (in unboundedly many cardinals, see Definition \ref{stab-def}), then $\bigK$ does not have the syntactic order property of length $\kappa$.
\end{prop}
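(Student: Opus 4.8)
The plan is to reduce the statement to the already-established machinery relating Galois stability of $K^0$, the order property, and the syntactic order property in the Galois Morleyization. First I would observe that $K = \Ksatpp{(K^0)}{\kappa}$ is not quite the class to which Corollary \ref{syn-stab-spectrum} and Corollary \ref{hanf-limit-op} were proved, since those were stated for an AEC $K$ with its $(<\kappa)$-Galois Morleyization, and here $K$ is only a $\kappap$-AEC. So the first step is to either (a) note that the relevant facts (Fact \ref{stab-facts-op}, Fact \ref{shelah-hanf}, Proposition \ref{op-equiv}, Corollary \ref{hanf-limit-op}) go through for $K^0$ directly, or (b) argue that the syntactic order property of length $\kappa$ in $\bigK$ propagates down to a genuine Galois order property of $K^0$. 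Route (b) seems cleanest: by Proposition \ref{op-equiv} (which needs no tameness and no amalgamation, only that $\bigK$ is the $(<\kappa)$-Galois Morleyization of $K$), if $\bigK$ has the syntactic order property of length $\kappa$, then for some $\alpha < \kappa$, $K$ has the Galois $\alpha$-order property of length $\kappa$.

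Next I would transfer this order property from $K$ (the $\kappa$-saturated models) back to $K^0$. Since every model in $K$ is in particular a model of $K^0$, and since Galois types over $\emptyset$ computed in a model $N \in K$ agree with those computed in $N$ viewed inside $K^0$ (monotonicity, Proposition \ref{galois-types-basic}, together with amalgamation of $K^0$ via Fact \ref{ap-eat}), a witness $\seq{\ba_i : i < \kappa}$ to the Galois $\alpha$-order property of length $\kappa$ in some $N \in K$ is also a witness in $K^0$. Hence $K^0$ has the $\alpha$-order property of length $\kappa$ for some $\alpha < \kappa$. Now I invoke $\kappa = \beth_\kappa > \LS(K^0)$: this gives $\hanf{|\alpha| + \LS(K^0)} < \kappa$, so by Fact \ref{shelah-hanf} (applied to $K^0$), $K^0$ has the $\alpha$-order property, hence the full $(<\kappa)$-order property.

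Finally I would contradict stability: by Fact \ref{stab-facts-op} applied to $K^0$, the $\alpha$-order property of $K^0$ implies $K^0$ is not $\alpha$-stable in any cardinal $\ge |\alpha| + \LS(K^0)$, and then by Fact \ref{stab-facts}.(\ref{stab-length-equiv}) (using amalgamation and — if needed — that $\alpha$-stability fails everywhere high up) $K^0$ is not stable in any sufficiently large cardinal, in particular not in unboundedly many cardinals, contradicting the hypothesis that $K^0$ is stable. Assembling: assume for contradiction $\bigK$ has the syntactic order property of length $\kappa$; run the chain above to derive that $K^0$ is not stable; contradiction.

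The step I expect to require the most care is the transfer between $K$ and $K^0$ in the second paragraph: one must check that the order property of the class of $\kappa$-saturated models really does yield an order property of the ambient AEC $K^0$ (Galois types over $\emptyset$ are absolute between a model and any larger one by monotonicity, so this should be routine, but it is the only place where the distinction between $K$ and $K^0$ genuinely matters). A secondary subtlety is making sure the cited facts, originally stated for AECs and their $(<\kappa)$-Galois Morleyizations under Hypothesis \ref{morley-hyp}, apply here — but since Hypothesis \ref{coheir-hyp} makes $\bigK$ the $(<\kappa)$-Galois Morleyization of $K$ and $\kappa > \LS(K^0)$, Proposition \ref{op-equiv} and the Hanf-number argument go through verbatim, so no new work is needed there beyond citing them for $K^0$ rather than $K$.
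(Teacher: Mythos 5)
Your argument is correct and is essentially the contrapositive of the paper's own proof, built from the same ingredients: Proposition \ref{op-equiv} plus Fact \ref{shelah-hanf} (i.e.\ Corollary \ref{hanf-limit-op}), the identification of Galois types in $K$ and $K^0$ (exactly the point the paper isolates: amalgamation together with the fact that every model of $K^0$ extends to a $\kappa$-saturated model, so a $K^0$-amalgam witnessing type equality can be upgraded to a $K$-amalgam), and Facts \ref{stab-facts-op} and \ref{stab-facts}; the paper simply runs it forward, showing $\widehat{K^0}$ lacks the syntactic order property of length $\kappa$ and then noting $\bigK \subseteq \widehat{K^0}$. The one citation to adjust is Fact \ref{stab-facts}.(\ref{stab-length-equiv}), whose statement assumes $\mu$-tameness, which is not part of Hypothesis \ref{coheir-hyp}; the link between stability for $1$-types and $\alpha$-stability should instead be drawn from Fact \ref{stab-facts}.(\ref{stab-facts-tb}) (amalgamation only), which is how the paper deduces that $K^0$ is $(<\kappa)$-stable in unboundedly many cardinals.
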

\begin{proof}
  By Fact \ref{stab-facts}, $\K^0$ is $(<\kappa)$-stable in unboundedly many cardinals. By Fact \ref{stab-facts-op}, $K^0$ does not have the $(<\kappa)$-order property. 

  Let $\widehat{\K^0}$ be the $(<\kappa)$-Galois Morleyization of $\K^0$. By Corollary \ref{hanf-limit-op}, $\widehat{K^0}$ does not have the syntactic order property of length $\kappa$. 

  Now note that Galois types are the same in $\K$ and $\K^0$: for $N \in \K$, $A \subseteq |N|$, and $\bb, \bb' \in \fct{<\infty}{|N|}$, $\gtp_{K^0} (\bb / A; N) = \gtp_{K^0} (\bb' / A; N)$ if and only if $\gtp_{\K} (\bb / A; N) = \gtp_{\K} (\bb' / A; N)$\footnote{Recall that $\gtp_K$ denotes Galois types as computed in $K$ and $\gtp_{K^0}$ Galois types computed in $K^0$ (see Definition \ref{gtp-def}).}. To see this, use amalgamation together with the fact that every model in $K^0$ can be $\lea$-extended to a model in $K$. 

  It follows that $\bigK \subseteq \widehat{\K^0}$. By definition of the syntactic order property, this means that also $\bigK$ does not have the syntactic order property of length $\kappa$, as desired.
\end{proof}

\begin{defin}\label{heir-coheir-def}
  Let $\bigN \in \bigK$, $A \subseteq |\bigN|$, and $p$ be a set of formulas (in some logic) over $\bigN$.

  \begin{enumerate}
  \item $p$ is a \emph{$(<\kappa)$-heir over $A$} if for any formula $\phi (\bx; \bb) \in p$ over $A$, there exists $\ba \in \fct{<\kappa}{A}$ such that $\phi (\bx; \ba) \in p \rest A$.
  \item $p$ is a \emph{$(<\kappa)$-coheir over $A$ in $\bigN$} if for any $\phi (\bx) \in p$ there exists $\ba \in \fct{<\kappa}{A}$ such that $\bigN \models \phi[\ba]$. When $\bigN$ is clear from context, we drop it.
  \end{enumerate}
\end{defin}
\begin{remark}
  Here, $\kappa$ is fixed (Hypothesis \ref{coheir-hyp}), so we will just remove it from the notation and simply say that $p$ is a (co)heir over $A$.
\end{remark}
\begin{remark}
  In this section, $p$ will be $\qtp (\bc / B; \bigN)$ for a fixed $B$ such that $A \subseteq B \subseteq |\bigN|$.
\end{remark}
\begin{remark}
  Working in $\bigN \in \bigK$, let $\bc$ be a permutation of $\bc'$, and $A, B$ be sets. Then $\qtp (\bc / B; \bigN)$ is a coheir over $A$ if and only if $\qtp (\bc' / B; \bigN)$ is a coheir over $A$. Similarly for heir. Thus we can talk about $\qtp (C / B; \bigN)$ being a heir/coheir over $A$ without worrying about the enumeration of $C$.
\end{remark}

We will mostly look at coheir, but the next proposition tells us how to express one in terms of the other.

\begin{prop}
$\qtp (\ba / A \bb; \bigN)$ is a heir over $A$ if and only if $\qtp (\bb / A \ba; \bigN)$ is a coheir over $A$.
\end{prop}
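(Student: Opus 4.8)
The statement is a purely symmetric unwinding of the two definitions of heir and coheir (Definition~\ref{heir-coheir-def}), so the proof should be a short direct argument that rewrites one condition into the other by swapping the roles of $\ba$ and $\bb$. First I would fix notation: write $\bx$ for the variables enumerating $\ba$ and $\by$ for the variables enumerating $\bb$, so that a quantifier-free $\Ll_{\kappa,\kappa}(\bigtau)$-formula appearing in one of these types has the shape $\phi(\bx;\by;\bc)$ with $\bc \in \fct{<\kappa}{A}$, and note that $\phi(\bx;\by;\bc) \in \qtp(\ba/A\bb;\bigN)$ iff $\phi(\by;\bx;\bc) \in \qtp(\bb/A\ba;\bigN)$ (just reading off from $\bigN \models \phi[\ba,\bb,\bc]$, which is symmetric in how we group the tuple). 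The point is that a formula over $A\bb$ in the variables $\bx$ is literally a formula over $A$ in the variables $\bx\by$ with $\by$ plugged by $\bb$, and dually.

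**Forward direction.** Assume $\qtp(\ba/A\bb;\bigN)$ is a heir over $A$. To show $\qtp(\bb/A\ba;\bigN)$ is a coheir over $A$, take any $\psi(\by) \in \qtp(\bb/A\ba;\bigN)$; this is a quantifier-free formula over $A\ba$, so write it as $\psi(\by) = \phi(\bx;\by;\bc)$ evaluated at $\bx = \ba$, with $\bc \in \fct{<\kappa}{A}$. Then $\phi(\bx;\bb;\bc)$, regarded as a formula over $A\bb$ in the variables $\bx$, lies in $\qtp(\ba/A\bb;\bigN)$. By the heir hypothesis there is $\ba' \in \fct{<\kappa}{A}$ with $\phi(\bx;\ba';\bc) \in \qtp(\ba/A\bb;\bigN)\rest A$, i.e.\ $\bigN \models \phi[\ba,\ba',\bc]$ (here I use that the restriction to $A$ is computed in $\bigN$ via Proposition~\ref{gtp-ftp-map}/the definition of $\qtp$). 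But $\bigN \models \phi[\ba,\ba',\bc]$ says exactly $\bigN \models \psi'[\ba']$ where $\psi'(\by) = \phi(\ba;\by;\bc) = \psi(\by)$, so $\ba' \in \fct{<\kappa}{A}$ witnesses the coheir condition for $\psi$. Since $\psi$ was arbitrary, $\qtp(\bb/A\ba;\bigN)$ is a coheir over $A$.

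**Converse.** This is the mirror image: assume $\qtp(\bb/A\ba;\bigN)$ is a coheir over $A$, take $\phi(\bx;\bb;\bc) \in \qtp(\ba/A\bb;\bigN)$ with $\bc \in \fct{<\kappa}{A}$, rewrite it as $\psi(\by) := \phi(\ba;\by;\bc) \in \qtp(\bb/A\ba;\bigN)$, apply the coheir hypothesis to get $\ba' \in \fct{<\kappa}{A}$ with $\bigN \models \psi[\ba']$, i.e.\ $\bigN \models \phi[\ba,\ba',\bc]$, hence $\phi(\bx;\ba';\bc) \in \qtp(\ba/A\bb;\bigN)\rest A$, which is the heir condition for $\phi$.

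**Main obstacle.** There is no serious obstacle; the only thing requiring a moment's care is the bookkeeping of which variables are ``free'' and which are ``parameters'': a formula over $A\bb$ in $\bx$ and a formula over $A$ in $\bx\by$ with $\by\mapsto\bb$ are genuinely the same object, and one must make sure the $(<\kappa)$-bound on the parameter tuple $\ba$ (resp.\ $\ba'$) drawn from $A$ is exactly the $(<\kappa)$-ary bound appearing in both definitions — which it is, since $\ell(\ba) < \kappa$ because $R_p$'s in $\bigtau$ have arity $<\kappa$ and quantifier-free $\Ll_{\kappa,\kappa}(\bigtau)$-formulas have $<\kappa$ free variables. I would also remark that the ``without worrying about the enumeration'' convention from the preceding remark lets us treat $\ba\bb$ as a set when convenient.
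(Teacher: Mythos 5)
Your proof is correct and is exactly the routine unwinding the paper has in mind: the paper's own proof of this proposition consists of the single word ``Straightforward.'' Your regrouping of each formula as $\phi(\bx;\by;\bc)$ with $\bc \in \fct{<\kappa}{A}$, swapping which block is free and which is substituted, is precisely the intended bookkeeping, and your care about the $(<\kappa)$ bounds on free variables and parameter tuples is the only point that needed attention.
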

\begin{proof}
  Straightforward.
\end{proof}

It is convenient to see coheir as an independence relation:

\begin{notation}
  Write $\nfs{M}{A}{B}{N}$ if $M, N \in K$, $M \lea N$, and $\qtp (A / |M| \cup B; \bigN)$ is a coheir over $|M|$ in $\bigN$. We also say\footnote{It is easy to check this does not depend on the choice of representatives.} that $\gtp (A / B; N)$ \emph{is a coheir over $M$}.
\end{notation}
\begin{remark}
  The definition of $\nf$ depends on $\kappa$ but we hide this detail. 
\end{remark}

Interestingly, Definition \ref{heir-coheir-def} is equivalent to the semantic definition of Boney and Grossberg \cite[Definition 3.2]{bg-v9}:

\begin{prop}
  Let $N \in K$.
  Then $p \in \gS^{<\infty} (B; N)$ is a coheir over $M \lea N$ if and only if for any $I \subseteq \ell (p)$ and any $B_0 \subseteq B$, if $|I_0| + |B_0| < \kappa$, $p^I \rest B_0$ is realized in $M$.
\end{prop}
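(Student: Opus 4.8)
The plan is to unwind the definitions on both sides and show they say the same thing, using the $\kappa$-saturation of $M$ to pass between syntactic formulas over $\bigN$ and Galois types realized in $M$. Recall that $p \in \gS^{<\infty}(B;N)$ being a coheir over $M$ means, by the notation introduced above, that $p^s = \qtp(\bc/B;\bigN)$ (for $\bc$ realizing $p$) is a $(<\kappa)$-coheir over $|M|$ in $\bigN$, i.e.\ every formula $\psi(\bx) \in p^s$ is realized by some tuple from $|M|$ of length $<\kappa$. The right-hand side says: for every small $I \subseteq \ell(p)$ and every small $B_0 \subseteq B$, the restriction $p^I \rest B_0$ is realized in $M$. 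The key link is Proposition~\ref{gtp-map} together with the fact that in the $(<\kappa)$-Galois Morleyization the relation symbols $R_q$ capture precisely the small Galois types over $\emptyset$.

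First I would prove the forward direction. Assume $p^s$ is a coheir over $|M|$ in $\bigN$. Fix $I \subseteq \ell(p)$ and $B_0 \subseteq B$ with $|I| + |B_0| < \kappa$; write $\bc_0 := \bc \rest I$ and let $\bb_0$ enumerate $B_0$. Consider the Galois type $q := \gtp(\bc_0 \bb_0 / \emptyset; \bigN)$, which lies in $\gS^{<\kappa}(\emptyset)$ since $|I| + |B_0| < \kappa$; let $\phi(\bx_0) := R_q(\bx_0, \bb_0)$, a quantifier-free $\Ll_{\kappa,\kappa}(\bigtau)$-formula over $B_0 \subseteq B$. By definition of the Galois Morleyization, $\bigN \models \phi[\bc_0]$, so $\phi \in p^s$ (after trivially identifying $p^s$ with $\qtp(\bc/B;\bigN)$ and noting free variables can be taken among those of $\bx$). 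By the coheir hypothesis there is $\bd_0 \in \fct{<\kappa}{|M|}$ with $\bigN \models \phi[\bd_0]$, i.e.\ $\gtp(\bd_0 \bb_0 / \emptyset; \bigN) = q = \gtp(\bc_0 \bb_0 / \emptyset; \bigN)$. Restricting the domain from $\emptyset$ back to $B_0$ (which is harmless — adding the parameters $\bb_0$ to the domain), this gives $\gtp(\bd_0 / B_0; \bigN) = \gtp(\bc_0 / B_0; \bigN) = p^I \rest B_0$, and $\bd_0 \in \fct{<\kappa}{|M|}$ witnesses that $p^I \rest B_0$ is realized in $M$.

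For the converse, assume every small restriction $p^I \rest B_0$ is realized in $M$, and let $\psi(\bx) \in p^s$ be an arbitrary quantifier-free $\Ll_{\kappa,\kappa}(\bigtau)$-formula; we must find a small tuple from $|M|$ realizing it. Let $I := \FV{\psi}$ (a set of size $<\kappa$ since $\psi$ is an $\Ll_{\kappa,\kappa}$-formula) and let $B_0 := \dom{\psi} \cap B$, also of size $<\kappa$. Put $\bc_0 := \bc \rest I$; then $\bigN \models \psi[\bc_0]$, and $\psi$ is a formula over $B_0$. By hypothesis there is $\bd_0 \in \fct{<\kappa}{|M|}$ with $\gtp(\bd_0 / B_0; M) = p^I \rest B_0 = \gtp(\bc_0 / B_0; \bigN)$ — here I use monotonicity (Proposition~\ref{galois-types-basic}) to compute the right-hand type in $\bigN \gea \bigM$ as well. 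By Lemma~\ref{gtp-map}, equal Galois types have equal quantifier-free syntactic types in the functorial expansion, so $\qtp(\bd_0 / B_0; \bigM) = \qtp(\bc_0 / B_0; \bigN)$; since $\psi$ is over $B_0$ and $\bigN \models \psi[\bc_0]$, we get $\bigM \models \psi[\bd_0]$. As $\bigM \subseteq \bigN$ and $\psi$ is quantifier-free, $\bigN \models \psi[\bd_0]$, and $\bd_0 \in \fct{<\kappa}{|M|}$ is the required witness. Hence $p^s$ is a coheir over $|M|$ in $\bigN$.

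The only genuinely delicate point is bookkeeping around the enumerations and the translation between "type over $\emptyset$ of $\bc_0 \bb_0$'' and "type over $B_0$ of $\bc_0$'': one has to check that $\gtp(\bd_0 \bb_0 / \emptyset) = \gtp(\bc_0 \bb_0 / \emptyset)$ really does give $\gtp(\bd_0 / B_0) = \gtp(\bc_0 / B_0)$ with the \emph{same} $B_0$ fixed pointwise, which is immediate from the definition of $E_{\text{at}}$ since an isomorphism fixing $\bc_0\bb_0 \mapsto \bd_0\bb_0$ in particular fixes the range of $\bb_0$, i.e.\ $B_0$. Everything else is a routine application of Proposition~\ref{inv-lem}, Lemma~\ref{gtp-map}, and the defining property of the Galois Morleyization, with $\kappa$-saturation of $M$ used (implicitly, via the hypothesis that small restrictions are realized in $M$) to produce the witnessing tuples. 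I do not expect any real obstacle beyond keeping the arities and domains straight.
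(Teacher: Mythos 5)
Your proof is correct and is exactly the definition-chase the paper intends -- its own proof consists of the single word ``Straightforward'' -- with the two key points made explicit: a single relation $R_q$ of the $(<\kappa)$-Galois Morleyization codes an entire small Galois type, and equality of Galois types over $\emptyset$ with the parameters appended yields equality of the types over the parameter set (your $E_{\text{at}}$ remark; under amalgamation the cleanest justification is that one may take one of the two witnessing embeddings to be the identity after renaming, so the other then fixes the range of $\bb_0$ pointwise).

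Two small points of hygiene. First, by the paper's notation, ``$p$ is a coheir over $M$'' for $p = \gtp(\bc/B;N)$ literally refers to $\qtp(\bc/|M| \cup B;\bigN)$ being a coheir over $|M|$, not $\qtp(\bc/B;\bigN)$; your converse only treats formulas with parameters in $B$, so you are implicitly assuming $|M| \subseteq B$. That is the situation in every use of the proposition (and without it the stated right-hand side would in any case be too weak, since the relation of $\bc$ to parameters from $M \setminus B$ cannot be recovered from restrictions to subsets of $B$), so this is really an ambiguity in the statement rather than a flaw in your argument, but it should be said. Second, when $B_0 \not\subseteq |M|$ the expressions $\gtp(\bd_0/B_0;M)$ and $\bigM \models \psi[\bd_0]$ do not typecheck; there is no need to pass through $\bigM$ at all: ``realized in $M$'' gives $\bd_0$ from $M$ with $\gtp(\bd_0/B_0;N) = \gtp(\bc_0/B_0;N)$, whence by Lemma \ref{gtp-map} the quantifier-free types in $\bigN$ agree and $\bigN \models \psi[\bd_0]$ directly, which is all the coheir condition asks for.
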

\begin{proof}
  Straightforward
\end{proof}

For completeness, we show that the definition of heir also agrees with the semantic definition of Boney and Grossberg \cite[Definition 6.1]{bg-v9}.

\begin{prop}
  Let $M_0 \lea M \lea N$ be in $K$, $\ba \in \fct{<\infty}{|\bigN|}$.

  Then $\qtp (\ba / M; \bigN)$ is a heir over $M_0$ if and only if for all $(<\kappa)$-sized $I \subseteq \ell (\ba)$ and $(<\kappa)$-sized $M_0^- \lea M_0$, $M_0^- \lea M^- \lea M$ (where we also allow $M_0^-$ to be empty), there is $f: M^- \xrightarrow[M_0^-]{} M_0$ such that $\gtp (\ba / M; N)$ extends $f (\gtp ((\ba \rest I) / M^-; N))$.
\end{prop}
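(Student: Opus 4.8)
The plan is to unwind both sides of the equivalence into purely syntactic/semantic translations and match them term by term, using the semantic-syntactic dictionary already established (Proposition \ref{gtp-map} and the surjectivity of $p \mapsto p^s$), together with saturation (Proposition \ref{elem-prop}) to produce the required embeddings of small submodels.

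First I would unfold the definition of ``$\qtp (\ba / M; \bigN)$ is a heir over $M_0$'': for every quantifier-free $\Ll_{\kappa,\kappa}(\bigtau)$-formula $\phi(\bx; \bb) \in \qtp(\ba/M;\bigN)$ with $\bb \in \fct{<\kappa}{|M_0|}$... wait, more precisely $\bb$ ranges over $|M|$ and the heir condition asks that $\phi(\bx;\bb)$ with $\bb$ over $M$ be ``witnessed'' by some $\ba' \in \fct{<\kappa}{|M_0|}$ with $\phi(\bx;\ba') \in \qtp(\ba/M_0;\bigN)$. Since each quantifier-free formula over $M$ mentions fewer than $\kappa$ parameters, I can reorganize: the heir condition says that for every $(<\kappa)$-sized tuple $\bm$ from $|M|$ and every $(<\kappa)$-sized subtuple $\ba_I := \ba \rest I$, $\qtp(\ba_I \bm / \emptyset; \bigN)$ is already realized by some $\ba_I' \bm'$ with $\bm' \in \fct{<\kappa}{|M_0|}$ realizing $\qtp(\bm/\emptyset;\bigN)$... actually the cleanest reformulation uses the $R_p$ relations of the Galois Morleyization: a quantifier-free type over $M$ restricted to $(<\kappa)$-many coordinates is coded by Galois types over $\emptyset$, so by Proposition \ref{gtp-map} and the structure of the Galois Morleyization the heir condition becomes: for all $(<\kappa)$-sized $I \subseteq \ell(\ba)$ and all $\bm \in \fct{<\kappa}{|M|}$, there is $\bm' \in \fct{<\kappa}{|M_0|}$ with $\gtp(\bm'/\emptyset; N) = \gtp(\bm/\emptyset;N)$ and $\gtp((\ba\rest I)\bm'/\emptyset;N) = \gtp((\ba\rest I)\bm/\emptyset;N)$. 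Now replacing the tuple $\bm$ by the $(<\kappa)$-sized model $M^-$ it generates inside $M$ (using the Löwenheim-Skolem axiom of the AEC $K^0$, or rather that $M^- \lea M$ can be chosen to contain $\bm$), and likewise viewing $\bm' $ as generating $M_0^- \lea M_0$, the equality of Galois types over $\emptyset$ of $\bm$ and $\bm'$ upgrades (by invariance of Galois types and amalgamation, Fact \ref{ap-eat}) to an isomorphism $f : M^- \cong f[M^-] \lea M_0$, and the second type equality says exactly that $\gtp(\ba/M;N)$ extends $f(\gtp((\ba\rest I)/M^-;N))$.

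The key steps, in order: (1) unfold ``heir over $M_0$'' and use that a quantifier-free $\Ll_{\kappa,\kappa}(\bigtau)$-formula has fewer than $\kappa$ free variables and fewer than $\kappa$ parameters, so it suffices to work with $(<\kappa)$-sized index sets $I$ and $(<\kappa)$-sized parameter sets; (2) translate between quantifier-free syntactic types in $\bigN$ and Galois types over $\emptyset$ using the $R_p$-predicates and Proposition \ref{gtp-map}, so that the heir condition becomes a statement purely about Galois types; (3) use the Löwenheim-Skolem-Tarski axiom to pass from $(<\kappa)$-sized parameter tuples to $(<\kappa)$-sized $\lea$-submodels $M^-$ of $M$ and $M_0^-$ of $M_0$; (4) use amalgamation (so $E = E_{\text{at}}$, Fact \ref{ap-eat}) and invariance of Galois types to convert an equality of Galois types over $\emptyset$ into the desired $K$-embedding $f : M^- \xrightarrow[M_0^-]{} M_0$; (5) check the two directions of the final biconditional by running this translation forwards and backwards. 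Since the proposition is flagged ``Straightforward'' in the surrounding style, I expect no single step to be deep.

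The main obstacle, such as it is, is bookkeeping: making the passage between ``formula with fewer than $\kappa$ parameters over $M$'' and ``$(<\kappa)$-sized submodel $M^- \lea M$'' precise, in both directions, and making sure the allowance ``$M_0^-$ may be empty'' (corresponding to formulas with no parameters, i.e.\ the restriction $\gtp(\ba/\emptyset;N)$) is handled consistently. One must also be careful that the embedding $f$ produced from the Galois-type equality genuinely fixes $M_0^-$ pointwise and lands inside $M_0$ — this is where amalgamation over models (enough to get $E = E_{\text{at}}$ by Fact \ref{ap-eat}) is used, and one should note that $M_0, M_0^-$ are honest models in $K$ so the hypotheses apply. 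Modulo these routine checks the equivalence falls out of the definitions.
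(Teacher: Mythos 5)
Your overall skeleton (unfold the heir property, code $(<\kappa)$-sized data by the $R_p$-predicates of the Galois Morleyization, pass between tuples and small $\lea$-submodels, and turn a Galois-type equality over $\emptyset$ into a $K$-embedding via Fact \ref{ap-eat}) is the same as the paper's, but your central translation step is stated in a way that loses exactly the delicate point of the proposition. The definition of $(<\kappa)$-heir over $M_0$ quantifies over formulas $\phi(\bx;\by)$ \emph{over $M_0$}, i.e.\ formulas carrying parameters $\ba_0 \in \fct{<\kappa}{|M_0|}$ that are \emph{not} replaced when the $\by$-tuple is moved into $M_0$. Your reformulation --- for all $(<\kappa)$-sized $I$ and all $\bar{m} \in \fct{<\kappa}{|M|}$ there is $\bar{m}' \in \fct{<\kappa}{|M_0|}$ with $\gtp((\ba \rest I)\bar{m}'/\emptyset;N) = \gtp((\ba \rest I)\bar{m}/\emptyset;N)$ --- drops these parameters, and is a priori strictly weaker than being a heir. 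As a result both directions break as written: in the forward direction, the map obtained by amalgamation from your type equality sends $M^-$ into $M_0$ but there is no reason it should fix the \emph{given} $M_0^-$ pointwise (your phrase ``viewing $\bar{m}'$ as generating $M_0^-$'' has the quantification backwards: $M_0^-$ is given in advance, sits inside $M^-$, and $f$ must restrict to the identity on it; likewise $M^-$ is given, not generated from a tuple, so the L\"owenheim-Skolem step belongs only to the converse); in the backward direction, your weakened condition only yields the heir property for formulas with no parameters from $M_0$, which is not the full heir property.

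The repair is precisely the bookkeeping the paper does: enumerate $M_0^-$ as $\bb_0$ and $|M^-|\setminus|M_0^-|$ as $\bb$, set $q := \gtp((\ba \rest I)\bb\,\bb_0/\emptyset;N)$, and apply the heir property to $R_q(\bx;\by;\bb_0)$, which is a formula \emph{over $M_0$} because $\bb_0$ lies in $M_0$; the heir property then replaces only $\bb$ by some $\bc \in \fct{<\kappa}{|M_0|}$ while keeping $\bb_0$ fixed, so the resulting equality $\gtp((\ba \rest I)\bb\,\bb_0/\emptyset;N) = \gtp((\ba \rest I)\bc\,\bb_0/\emptyset;N)$ yields, via Fact \ref{ap-eat}, a map $g : N \rightarrow N'$ fixing $\ba \rest I$ and $\bb_0$, whose restriction $f := g \rest M^-$ is the desired $f : M^- \xrightarrow[M_0^-]{} M_0$ (with $f[M^-] \lea M_0$ by coherence). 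Symmetrically, for the converse you must place the formula's $M_0$-parameters inside the chosen $M_0^- \lea M_0 $ and the $\by$-parameters inside $M^- \lea M$ with $M_0^- \lea M^-$, and then use that $f$ fixes those parameters to conclude that the formula with its \emph{original} $M_0$-parameters and $f$-image of the $\by$-tuple lies in $\qtp(\ba/M_0;\bigN)$. Finally, your appeal to saturation (Proposition \ref{elem-prop}) plays no role in this proposition.
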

\begin{proof}
  Assume first $\qtp (\ba / M; \bigN)$ is a heir over $M_0$ and let $I \subseteq \ell (\ba)$, $M_0^- \lea M^- \lea M$ be $(<\kappa)$-sized, with $M_0^-$ possibly empty. Let $p := \gtp ((\ba \rest I) / M^-; N)$. Let $\bb_0$ be an enumeration of $M_0^-$ and let $\bb$ be an enumeration of $|M^-| \backslash |M_0^-|$. Let $q := \gtp ((\ba \rest I) \bb_0 \bb / \emptyset; N)$. Consider the formula $\phi (\bx; \bb; \bb_0) := R_q (\bx; \bb; \bb_0)$, where $\bx$ are the free variables in $\qtp (\ba / M; \bigN)$ and we assume for notational simplicity that the $I$-indiced variables are picked out by $R_q (\bx, \bb, \bb_0)$. Then $\phi$ is in $\qtp (\ba / M; \bigN)$. By the syntactic definition of heir, there is $\bc \in \fct{<\kappa}{|M_0|}$ such that $\phi (\bx; \bc; \bb_0)$ is in $\qtp (\ba / M_0; \bigN)$. By definition of the $(<\kappa)$-Galois Morleyization this means that $\gtp ((\ba \rest I) \bb \bb_0 / \emptyset; N) = \gtp ((\ba \rest I) \bc \bb_0 / \emptyset; N)$. 

  By definition of Galois types and amalgamation (see Fact \ref{ap-eat}), there exists $N' \gea N$ and $g: N \rightarrow N'$ such that $g ((\ba \rest I) \bb \bb_0) = (\ba \rest I) \bc \bb_0$. Let $f := g \rest M^-$. Then from the definitions of $\bb_0$, $\bb$, and $\bc$, we have that $f: M^- \xrightarrow[M_0^-]{} M_0$. Moreover, $f (\gtp ((\ba \rest I) / M^-; N)) = \gtp (\ba \rest I / f[M^-]; N)$, which is clearly extended by $\gtp (\ba / M; N)$.

  The converse is similar.
\end{proof}
\begin{remark}
  The notational difficulties encountered in the above proof and the complexity of the semantic definition of heir show the convenience of using a syntactic notation rather than working purely semantically.
\end{remark}

We now investigate the properties of coheir. For the convenience of the reader, we explicitly prove the uniqueness property (we have to slightly adapt the proof of (U) from \cite[Proposition 4.8]{makkaishelah}). For the others, they are either straightforward or we can just quote.

\begin{lem}\label{uniq-sym}
  Let $M, N, N' \in K$ with $M \lea N$, $M \lea N'$. Assume $\bigM$ does \emph{not} have the syntactic order property of length $\kappa$. Let $\ba \in \fct{<\infty}{|N|}$, $\ba' \in \fct{<\infty}{|N'|}$, $\bb \in \fct{<\infty}{|M|}$ be given such that:
  \begin{enumerate}
    \item $\qtp (\ba / M; \bigN) = \qtp (\ba' / M; \widehat{N'})$
    \item $\qtp (\ba / M \bb; \bigN)$ is a coheir over $M$.
    \item $\qtp (\bb / M \ba'; \widehat{N'})$ is a coheir over $M$.
  \end{enumerate}

  Then $\qtp (\ba / M \bb; \bigN) = \qtp (\ba' / M \bb; \widehat{N'})$.
\end{lem}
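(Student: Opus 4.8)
The plan is to mimic the classical first-order argument that coheir has the uniqueness property, but carried out syntactically inside the Galois Morleyization, and to convert any failure of uniqueness into an instance of the syntactic order property of length $\kappa$ inside $\bigM$. Suppose toward a contradiction that $\qtp(\ba/M\bb;\bigN) \neq \qtp(\ba'/M\bb;\widehat{N'})$. Since both types restrict to the same type over $M$ (hypothesis (1)), the disagreement must involve $\bb$: there is a quantifier-free $\Ll_{\kappa,\kappa}(\bigtau)$-formula $\phi(\bx;\by)$ and a tuple $\bb_0$ a subsequence of $\bb$ of length $<\kappa$ (possible since formulas have $<\kappa$ free variables) with, say, $\bigN \models \phi[\ba;\bb_0]$ but $\widehat{N'} \models \neg\phi[\ba';\bb_0]$. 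I may also absorb finitely (or $<\kappa$) many parameters from $M$ into $\phi$, so we can take $\phi$ to be over $M$ and of the form $\phi(\bx;\by)$ with $\ell(\bx) = \ell(\ba)$ restricted appropriately; really only a $(<\kappa)$-subtuple of $\ba$ matters, so rename so that $\ell(\bx),\ell(\by) < \kappa$.

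The heart of the argument is to build, by simultaneous recursion, sequences $\seq{\bc_i : i < \kappa}$ realizing a "generic" pattern that produces the order property in $\bigM$. First I would build $\seq{\bd_i : i < \kappa}$ inside $\bigM$ picking out behavior relative to $\ba$: using that $\qtp(\ba/M\bb;\bigN)$ is a coheir over $M$ (hypothesis (2)), whenever a formula holds of $\ba$ over a $(<\kappa)$-sized subset of $M$ together with finitely many of the previously chosen $\bd_j$'s, it is witnessed by some tuple from $M$; dually, using hypothesis (3), that $\qtp(\bb/M\ba';\widehat{N'})$ is a coheir over $M$, I can reflect $\bb$-behavior over $M\ba'$ down into $M$. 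The standard back-and-forth (as in \cite[Proposition 4.8]{makkaishelah}) then yields a sequence $\seq{\bc_i : i < \kappa}$ in $\bigM$ of tuples of some fixed length $\alpha < \kappa$, together with the formula $\phi$ (suitably rearranged into the shape $\psi(\bx_1\bx_2;\by_1\by_2)$ needed by Definition \ref{syntactic-op}), such that $\bigM \models \psi[\bc_i,\bc_j]$ iff $i<j$. Concretely, $\bc_i$ will bundle together a witness-tuple playing the role of (a restriction of) $\ba$ at stage $i$ and a witness-tuple playing the role of (a restriction of) $\bb$, and the asymmetry $\bigN\models\phi[\ba;\bb_0]$ versus $\widehat{N'}\models\neg\phi[\ba';\bb_0]$ combined with $\qtp(\ba/M;\bigN)=\qtp(\ba'/M;\widehat{N'})$ forces $\phi$ to hold between earlier and later tuples in one direction and fail in the other. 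This witnesses the syntactic $\alpha$-order property of length $\kappa$ in $\bigM$, contradicting the hypothesis.

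The main obstacle I expect is the bookkeeping in the recursion: one must keep $\ba$, $\ba'$, $\bb$ and all the reflected witnesses inside a single monster-like ambient model so that the various quantifier-free types can be compared, and one must be careful that at each of the $\kappa$ stages only $<\kappa$ parameters have accumulated, so that coheir (which only gives witnesses for single formulas, each with $<\kappa$ parameters) can actually be applied — this is where $\kappa$ being the index length matters, and where Proposition \ref{gtp-map} and Proposition \ref{inv-lem} get used to pass between $\bigN$, $\widehat{N'}$, and $\bigM$. A secondary subtlety is that $\bigM$ need not be $\kappa$-saturated a priori, so the reflected witnesses must genuinely come from the coheir hypotheses rather than from saturation; Proposition \ref{elem-prop} (the $\Sigma_1$-elementarity of $\lea$ in $K$) is what guarantees that a witness found in a larger model can be pulled back, or alternatively one simply records the whole configuration inside $N$ (resp.\ $N'$) and uses that $\bigM \subseteq \bigN$. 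Once the sequence is built, identifying the right formula $\psi$ of the form demanded by Definition \ref{syntactic-op} is routine relabeling of variable blocks.
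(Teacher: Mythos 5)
Your plan is essentially the paper's own argument: assuming the two types differ on a formula $\phi$ over $M$, one builds by recursion a length-$\kappa$ sequence of bundled pairs $\ba_i\bb_i$ inside $M$, alternately applying hypothesis (2) (coheir in $\bigN$) and hypothesis (3) (coheir in $\widehat{N'}$), shuttling through $\bigM\subseteq\bigN,\widehat{N'}$ exactly as you describe, and reading off the syntactic order property of length $\kappa$ — this is precisely the adaptation of the uniqueness clause of \cite[Proposition 4.8]{makkaishelah} that the paper carries out. The only detail your sketch leaves implicit is that the natural inductive invariant gives $\phi[\ba_i,\bb_j]$ iff $i\le j$, so the order-property formula must include a conjunct such as $\bx_1\by_1\neq\bx_2\by_2$ to turn the non-strict relation into the strict one required by Definition \ref{syntactic-op}; this is routine and your proposal otherwise matches the paper's proof.
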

\begin{proof}
  We suppose not and prove that $\bigM$ has the syntactic order property of length $\kappa$. Assume that $\qtp (\ba / M \bb; \bigN) \neq \qtp (\ba' / M\bb; \widehat{N'})$ and pick $\phi (\bx, \by)$ a formula over $M$ witnessing it: 

\begin{equation}
\bigN \models \phi[\ba; \bb] \text{ but } \widehat{N'} \models \neg \phi[\ba'; \bb] \label{eq2}
\end{equation}

(note that we can assume without loss of generality that $\ell (\ba) + \ell (\bb) < \kappa$).

Define by induction on $i < \kappa$ $\ba_i$, $\bb_i$ \emph{in $M$} such that for all $i, j < \kappa$:

  \begin{enumerate}
    \item $\bigM \models \phi[\ba_i, \bb]$.
    \item $\bigM \models \phi[\ba_i, \bb_j]$ if and only if $i \le j$.
    \item\label{cond3} $\bigN \models \neg \phi[\ba, \bb_i]$.
  \end{enumerate}

  Note that since $\bb_i \in \fct{<\kappa}{|M|}$, (\ref{cond3}) is equivalent to $\widehat{N'} \models \neg \phi[\ba', \bb_i]$.

  \underline{This is enough:} Then $\chi (\bx_1, \by_1, \bx_2, \by_2) := \phi (\bx_1, \by_2) \land \bx_1 \by_1 \neq \bx_2 \by_2$ together with the sequence $\seq{\ba_i\bb_i : i < \kappa}$  witness the syntactic order property of length $\kappa$.

  \underline{This is possible:} Suppose that $\ba_j, \bb_j$ have been defined for all $j < i$. Note that by the induction hypothesis and \eqref{eq2} we have:

  $$
  \bigN \models \bigwedge_{j < i} \phi [\ba_j, \bb] \land \bigwedge_{j < i} \neg \phi [\ba, \bb_j] \land \phi [\ba, \bb]
  $$

  Since $\qtp (\ba / A \bb; \bigN)$ is a coheir over $M$, there is $\ba'' \in \fct{<\kappa}{|M|}$ such that: 

  $$
  \bigN \models \bigwedge_{j < i} \phi [\ba_j, \bb] \land \bigwedge_{j < i} \neg \phi [\ba'', \bb_j] \land \phi [\ba'', \bb]
  $$

  Note that all the data in the equation above is in $M$, so as $M \lea N$, the monotonicity axiom of functorial expansions implies $\bigM \subseteq \bigN$, so $\bigM$ also models the above. By monotonicity again, $\widehat{N'}$ models the above. We also know that $\widehat{N'} \models \neg \phi[\ba', \bb]$. Thus we have:

  $$
  \widehat{N'} \models \bigwedge_{j < i} \phi [\ba_j, \bb] \land \bigwedge_{j < i} \neg \phi [\ba'', \bb_j] \land \phi [\ba'', \bb] \land \neg \phi[\ba', \bb]
  $$

 Since $\qtp (\bb / M \ba'; \bigN)$ is a coheir over $M$, there is $\bb'' \in \fct{<\kappa}{|M|}$ such that:   

  $$
 \widehat{N'} \models \bigwedge_{j < i} \phi [\ba_j, \bb''] \land \bigwedge_{j < i} \neg \phi [\ba'', \bb_j] \land \phi [\ba'', \bb''] \land \neg \phi [\ba', \bb'']
  $$

 Let $\ba_i := \ba''$, $\bb_i := \bb''$. It is easy to check that this works.  
\end{proof}

\begin{thm}[Properties of coheir]\label{coheir-syn} \
  \begin{enumerate}
    \item\label{coheir-1} \begin{enumerate}
      \item Invariance: If $f: N \cong N'$ and $\nfs{M}{A}{B}{N}$, then $\nfs{f[M]}{f[A]}{f[B]}{N'}$. 
      \item Monotonicity: If $\nfs{M}{A}{B}{N}$ and $M \lea M' \lea N_0 \lea N$, $A_0 \subseteq A$, $B_0 \subseteq B$, $|M'| \subseteq B$, $A_0 \cup B_0 \subseteq |N_0|$, then $\nfs{M'}{A_0}{B_0}{N_0}$.
      \item Normality: If $\nfs{M}{A}{B}{N}$, then $\nfs{M}{A \cup |M|}{B \cup |M|}{N}$.
      \item Disjointness: If $\nfs{M}{A}{B}{N}$, then $A \cap B \subseteq |M|$.
      \item Left and right existence: $\nfs{M}{A}{M}{N}$ and $\nfs{M}{M}{B}{N}$.
      \item Left and right $(<\kappa)$-set-witness: $\nfs{M}{A}{B}{N}$ if and only if for all $A_0 \subseteq A$ and $B_0 \subseteq B$ of size less than $\kappa$, $\nfs{M}{A_0}{B_0}{N}$.
      \item Strong left transitivity: If $\nfs{M_0}{M_1}{B}{N}$ and $\nfs{M_1}{A}{B}{N}$, then $\nfs{M_0}{A}{B}{N}$.
    \end{enumerate}
    \item\label{coheir-2} If $\bigK$ does not have the syntactic order property of length $\kappa$, then\footnote{Note that (by Proposition \ref{syntactic-op-stable}) this holds in particular if $\kappa = \beth_\kappa$ and $K^0$ is stable.}:
      \begin{enumerate}
        \item Symmetry: $\nfs{M}{A}{B}{N}$ if and only if $\nfs{M}{B}{A}{N}$.
        \item Strong right transitivity: If $\nfs{M_0}{A}{M_1}{N}$ and $\nfs{M_1}{A}{B}{N}$, then $\nfs{M_0}{A}{B}{N}$.
        \item Set local character: For all cardinals $\alpha$, all $p \in \gS^{\alpha} (M)$, there exists $M_0 \lea M$ with $\|M_0\| \le \mu_\alpha := \left(\alpha + 2\right)^{<\kappap}$ such that $p$ is a coheir over $M_0$.
        \item\label{coheir-syn-uq} Syntactic uniqueness: If $M_0 \lea M \lea N_\ell$ for $\ell = 1,2$, $|M_0| \subseteq B \subseteq |M|$. $q_\ell \in \Ss_{\qfl}^{<\infty} (B; \widehat{N_\ell})$, $q_1 \rest M_0 = q_2 \rest M_0$ and $q_\ell$ is a coheir over $M_0$ in $\widehat{N_\ell}$ for $\ell = 1,2$, then $q_1 = q_2$.

        \item\label{stab-noop} Syntactic stability: For $\alpha$ a cardinal, $\bigK$ is syntactically $\alpha$-stable in all $\lambda \ge \LS (K^0)$ such that $\lambda^{\mu_\alpha} = \lambda$.
          \end{enumerate}
    \item\label{coheir-3} If $\bigK$ does not have the syntactic order property of length $\kappa$ and $K^0$ is $(<\kappa)$-tame and short for types of length less than $\alpha$, then:
      \begin{enumerate}
        \item Uniqueness: If $p, q \in \gS^{<\alpha} (M)$ are coheir over $M_0 \lea M$ and $p \rest M_0 = q \rest M_0$, then $p = q$. 
        \item Stability: For all $\beta < \alpha$, $K^0$ is $\beta$-stable in all $\lambda \ge \LS (\K^0)$ such that $\lambda^{\mu_\beta} = \lambda$.
      \end{enumerate}
    \end{enumerate}
\end{thm}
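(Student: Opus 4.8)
The plan is to treat part (\ref{coheir-1}) as pure unwinding of definitions (no order-property hypothesis is needed there), to identify Lemma \ref{uniq-sym} as the engine behind part (\ref{coheir-2}), and then to obtain part (\ref{coheir-3}) by pushing the syntactic statements of part (\ref{coheir-2}) through the semantic-syntactic correspondence.

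\textbf{Part (\ref{coheir-1}).} Here I would use only the defining clause of coheir — ``every formula in the relevant quantifier-free type has a witness in a $(<\kappa)$-tuple from $M$'' — together with Proposition \ref{inv-lem}, the monotonicity axiom of functorial expansions (relations of $\bigM$ persist in $\bigN$), and Proposition \ref{elem-prop}. Invariance is Proposition \ref{inv-lem}. Monotonicity, normality, disjointness, and $(<\kappa)$-set-witness all follow by observing that shrinking the realizing set, the parameter set, or the ambient model only shrinks the set of formulas to be witnessed while leaving the supply of witnesses inside $M$ intact or larger. Left existence $\nfs{M}{A}{M}{N}$ is exactly where $\kappa$-saturation enters: it is the $\Sigma_1$-elementarity of Proposition \ref{elem-prop}, which produces a witness in $M$ for any $\exists\bx\,\phi(\bx)$ with parameters in $M$ that holds in $\bigN$; right existence is trivial because $M$ itself supplies witnesses. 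Strong left transitivity is the only clause with content: given $\phi \in \qtp(A/|M_0|\cup B;\bigN)$, first use $\nfs{M_1}{A}{B}{N}$ (and $|M_0|\subseteq|M_1|$) to replace the $A$-part by a witness $\ba_1\in\fct{<\kappa}{|M_1|}$, then reinterpret the resulting formula as a member of $\qtp(|M_1|/|M_0|\cup B;\bigN)$ and apply $\nfs{M_0}{M_1}{B}{N}$ to pull a witness down into $|M_0|$.

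\textbf{Part (\ref{coheir-2}).} This is where ``$\bigK$ has no syntactic order property of length $\kappa$'' is used, and the hard part, Lemma \ref{uniq-sym}, is already in hand. For syntactic uniqueness (\ref{coheir-syn-uq}) I would re-run the induction of Lemma \ref{uniq-sym} with $M_0$ in the role of the base model ``$M$'' and an enumeration of $B\subseteq|M|$ in the role of ``$\bb$'': the coheir hypotheses on $q_1,q_2$ over $M_0$ produce exactly the witnesses the induction requires, the extracted order-property sequence lives in $M_0\subseteq M$, and the contradiction is now with $\bigM\in\bigK$ (rather than with a single named model), which is why the stronger hypothesis on all of $\bigK$ is invoked. (Alternatively, once symmetry is available one can rewrite the coheir hypothesis on $q_2$ as a coheir statement with $B$ on the left and feed things directly into Lemma \ref{uniq-sym}.) Symmetry I would prove by a short induction of the same flavour — an asymmetric coheir configuration yields a syntactic order property of length $\kappa$ — as in \cite{makkaishelah, bg-v9}; strong right transitivity is then symmetry plus strong left transitivity from part (\ref{coheir-1}). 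Set local character is the standard chain argument: if $p\in\gS^\alpha(M)$ were not a coheir over any submodel of size $\le\mu_\alpha$, build an increasing continuous $\seq{M_i:i<\kappap}$ of small submodels of $M$ together with formulas witnessing the failure of coheir over each $M_i$, and read off a syntactic order property of length $\kappa$. Finally syntactic stability (\ref{stab-noop}): given $A\subseteq|\bigN|$ with $|A|\le\lambda$ and $\lambda^{\mu_\alpha}=\lambda$, pass to $M\lea N$ of size $\le\lambda$ with $A\subseteq|M|$; by set local character each type in $\Ss_{\qfl}^\alpha(M;\bigN)$ is a coheir over some $M_0\lea M$ of size $\le\mu_\alpha$, by syntactic uniqueness it is determined by $M_0$ and its restriction to $M_0$, and since there are at most $\|M\|^{\mu_\alpha}\le\lambda$ choices of $M_0$ and at most $2^{\mu_\alpha}\le\lambda$ restrictions over each, we get $|\Ss_{\qfl}^\alpha(A;\bigN)|\le|\Ss_{\qfl}^\alpha(M;\bigN)|\le\lambda$.

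\textbf{Part (\ref{coheir-3}).} Here I would transport the previous two items through the semantic-syntactic correspondence. By Hypothesis \ref{coheir-hyp} and amalgamation, Galois types agree whether computed in $K^0$ or in $K=\Ksatpp{(K^0)}{\kappa}$ (as in the proof of Proposition \ref{syntactic-op-stable}), so tameness and shortness of $K^0$ for types of length $<\alpha$ transfer to $K$, and Theorem \ref{separation} (equivalently Corollary \ref{types-formula}) makes $p\mapsto p^s$ a bijection on types of length $<\alpha$. Uniqueness then reads off at once: from $p,q\in\gS^{<\alpha}(M)$ coheir over $M_0$ with $p\rest M_0=q\rest M_0$ we get $p^s,q^s$ coheir over $M_0$ with equal $M_0$-restrictions, hence $p^s=q^s$ by (\ref{coheir-syn-uq}), hence $p=q$. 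Stability is the same pattern: counting $\gS^\beta(A;N)$ reduces, via restriction to a $\le\lambda$-sized submodel (using Propositions \ref{galois-types-basic} and \ref{gtp-ftp-map}) and the bijection on types of length $\beta<\alpha$, to counting $\Ss_{\qfl}^\beta$, which is $\le\lambda$ when $\lambda^{\mu_\beta}=\lambda$ by (\ref{stab-noop}).

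\textbf{Main obstacle.} With Lemma \ref{uniq-sym} granted, the conceptual difficulty is low; the delicate points I expect to spend time on are (i) making sure, in the proof of (\ref{coheir-syn-uq}), that the order-property sequence extracted from the Lemma \ref{uniq-sym}-style induction genuinely sits inside a structure of $\bigK$ even though the extra parameters range over $M$ rather than over the base $M_0$, so that the ``no order property of length $\kappa$'' hypothesis applies, and (ii) verifying that the cardinal arithmetic in the counting argument for (\ref{stab-noop}) really collapses to the clean hypothesis $\lambda^{\mu_\alpha}=\lambda$, i.e.\ that $|\bigtau|$ and $\LS(K^0)$ get absorbed into $\mu_\alpha$ and $\lambda$.
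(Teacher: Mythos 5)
Your proposal is essentially the paper's own proof: part (\ref{coheir-1}) is definition-unwinding with Proposition \ref{elem-prop} supplying left existence, part (\ref{coheir-2}) runs the Pillay/Makkai--Shelah order-property arguments (symmetry, the chain argument for set local character, first-order-style counting for stability) together with Lemma \ref{uniq-sym}, and part (\ref{coheir-3}) is the translation through Theorem \ref{separation}, exactly as in the paper. One caution: your primary plan for syntactic uniqueness --- re-running the induction of Lemma \ref{uniq-sym} directly from the coheir hypotheses on $q_1,q_2$ --- does not quite work as stated, because the second step of that induction needs the ``crossed'' hypothesis that $\qtp (\bb / M_0 \ba_2; \widehat{N_2})$ is a coheir over $M_0$ (witnesses for the $\bb$-side with $\ba_2$ among the parameters), which the uniqueness statement does not give directly; this is precisely why the paper first applies symmetry and then quotes Lemma \ref{uniq-sym}, i.e.\ the route you mention parenthetically is the one to use.
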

\begin{proof} 
  Observe that (except for part (\ref{coheir-3})), one can work in $\bigK$ and prove the properties there using purely syntactic methods (so amalgamation is never needed for example). More specifically, (\ref{coheir-1}) is straightforward. As for (\ref{coheir-2}), symmetry is exactly as in\footnote{Note that a proof of symmetry of nonforking from no order property already appears in \cite{shelahfobook78}, but Pillay's proof for coheir is the one we use here.} \cite[Proposition 3.1]{pillay82} (Lemma \ref{uniq-sym} is not needed here), strong right transitivity follows from strong left transitivity and symmetry, syntactic uniqueness is by symmetry and Lemma \ref{uniq-sym}, and set local character is as in the proof of $(B)_\mu$ in \cite[Proposition 4.8]{makkaishelah}. Note that the proofs in \cite{makkaishelah} and \cite{pillay82} use that the ordering has some elementarity. In our case, this is given by Proposition \ref{elem-prop}.

  The proof of stability is as in the first-order case. To get part (\ref{coheir-3}), use the translation between Galois and syntactic types (Theorem \ref{separation}).
\end{proof}
\begin{proof}[Proof of Theorem \ref{coheir-syn-ab}]
  If the hypotheses of Theorem \ref{coheir-syn-ab} in the abstract hold for the AEC $K^0$, then the hypothesis of each parts of Theorem \ref{coheir-syn} hold (see Proposition \ref{syntactic-op-stable}).
\end{proof}
\begin{remark}
  We can give more localized version of some of the above results. For example in the statement of the symmetry property it is enough to assume that $\bigM$ does not have the syntactic order property of length $\kappa$. We could also have been more precise and state the uniqueness property in terms of being $(<\kappa)$-tame and short for $\{q_1, q_2\}$, where $q_1, q_2$ are the two Galois types we are comparing.
\end{remark}
\begin{remark}
  We can use Theorem \ref{coheir-syn}.(\ref{stab-noop}) to get another proof of the equivalence between (syntactic) stability and no order property in AECs.
\end{remark}
\begin{remark}
  The extension property (given $p \in \gS^{<\infty} (M)$, $N \gea M$, $p$ has an extension to $N$ which is a coheir over $M$) seems more problematic. In \cite{bg-v9}, Boney and Grossberg simply assumed it (they also showed that it followed from $\kappa$ being strongly compact \cite[Theorem 8.2.1]{bg-v9}). Here we do not need to assume it but are still unable to prove it. In \cite{indep-aec-v5}, we prove it assuming a superstability-like hypothesis and more locality\footnote{A word of caution: In \cite[Section 4]{hyttinen-lessmann}, the authors give an example of an $\omega$-stable class that does not have extension. However, the extension property they consider is \emph{over all sets}, not only over models.}.
\end{remark}

\bibliographystyle{amsalpha}
\bibliography{syntax-tame-aecs}

\end{document}